\numberwithin{equation}{section}
\newtheorem{theorem}{Theorem}[section]
\newtheorem{example}[theorem]{Example}%[section]
\newtheorem{remark}[theorem]{Remark}%[section]
\newtheorem{corollary}[theorem]{Corollary}%[section]
\newtheorem{proposition}[theorem]{Proposition}
\newcommand{\dx}{\, \mbox{\rm d}}
\newcommand{\Ker}{\mbox{\rm Ker}}
\newcommand{\Rad}{\mbox{\rm Rad}}
\newcommand{\Infinit}{\mbox{\rm Infinit}}
\newcommand{\Con}{\mbox{\rm Con}}
\begin{document}
\title[States on EMV-algebras]{States on EMV-algebras}
\author[Anatolij Dvure\v{c}enskij, Omid Zahiri]{Anatolij Dvure\v{c}enskij$^{1,2}$, Omid Zahiri$^3$}

\date{}%
\thanks{The first author gratefully acknowledges the support by the grant of
the Slovak Research and Development Agency under contract APVV-16-0073, by the grant VEGA No. 2/0069/16 SAV as well as by GA\v{C}R 15-15286S}
\address{$^1$Mathematical Institute, Slovak Academy of Sciences, \v{S}tef\'anikova 49, SK-814 73 Bratislava, Slovakia}
\address{$^2$Palack\' y University Olomouc, Faculty of Sciences, t\v r. 17.listopadu 12, CZ-771 46 Olomouc, Czech Republic}
\address{$^3$University of Applied Science and Technology, Tehran, Iran}
\email{\tt dvurecen@mat.savba.sk, zahiri@protonmail.com}

\keywords{MV-algebra, EMV-algebra, state, state-morphism, Krein--Mil'man representation, pre-state, strong pres-state, Jordan signed measure, integral representation of state, the Horn--Tarski theorem}
\subjclass[2010]{06C15, 06D35}

\maketitle

\begin{abstract}
We define a state as a $[0,1]$-valued, finitely additive function attaining the value $1$ on an EMV-algebra, which is an algebraic structure close to MV-algebras, where the top element is not assumed. We show that states always exist, the extremal states are exactly state-morphisms. Nevertheless the state space is a convex space that is not necessarily compact, a variant of the Krein--Mil'man theorem saying states are generated by extremal states, is proved. We define a weaker form of states, pre-states and strong pre-states, and also Jordan signed measures which form a Dedekind complete $\ell$-group. Finally, we show that every state can be represented by a unique regular probability measure, and a variant of the Horn--Tarski theorem is proved.
\end{abstract}

\section{Introduction}%1

Probability reasoning on MV-algebras has been started in \cite{Mun2} by states, where a state is a finitely additive and positive function $s$ on an MV-algebra $M$ that is normalized, i.e. $s(1)=1$, and $s(x\oplus y)=s(x)+s(y)$ whenever $x\odot y = 0$ for $x,y \in M$. It means averaging the truth-value in \L ukasiewicz logic. This is a special case of states on effect algebras because every MV-algebra can be studied also as an effect algebra. For more info about states on effect algebras see \cite{DvPu}, and about states on a non-commutative form of MV-algebras see \cite{DvuS}. We note that a state for effect algebras is a crucial notion because effect algebras introduced in \cite{FoBe} have been used for modeling uncertainties in quantum mechanical measurements.

There is also another approach to probability reasoning of MV-algebras.
In \cite{FlMo}, the authors find an algebraizable logic whose equivalent algebraic semantics is the variety of state MV-algebras. In other words, they expanded MV-algebras by a unary operator, called an internal state or a state-operator, whose properties resemble the properties of a state.

The probability methods used in MV-algebras have been expanded in the last 10--15 years.

We note that a probability measure on a measurable space $(\Omega,\mathcal S)$, where $\Omega$ is a non-void set and $\mathcal S$ is a $\sigma$-algebra of subsets of $\Omega$, is by \cite{Kol} a $\sigma$-additive probability measure $P$, however due to de Finetti, a probability measure has to be only a finitely additive measure. But due to \cite{Kro,Pan}, these approaches are more-less equivalent because every finitely additive measure even on an MV-algebra can be represented by a unique regular $\sigma$-additive probability measure. Such a result was extended also for effect algebras, see \cite{241,247}. An analogous result will be established in the present paper also for EMV-algebras.

In \cite{DvZa}, the authors introduced EMV-algebras (extended MV-algebras) which locally resemble MV-algebras, but no top element is guaranteed. They extend generalized Boolean algebras, or equivalently, Boolean rings. We extended \L ukasiewicz type algebraic structures with incomplete total information which is complete only locally: Conjunction and disjunctions exist but negation exists only in a local sense, i.e. negation of $a$ in $b$ exists whenever $a\le b$, but the total negation of the event $a$ is not assumed.

The basic representation theorem for EMV-algebras, see \cite[Thm 5.21]{DvZa}, says an EMV-algebra is either an MV-algebra or we can find an MV-algebra $N$ where the original EMV-algebra can be embedded as a maximal ideal of the MV-algebra $N$. This result is crucial for our reasoning.

The main aim of the paper is to introduce and study states for EMV-algebras even if they do not possess a top element. Then if $s$ is a state on an EMV-algebra and $a$ is an event, then $s(a)$ will represent averaging the truth-value of the event $a$ in \L ukasiewicz logic with incomplete information.

The paper is organized as follows. In Section 2, we gather the basic notions and results on EMV-algebras. In Section 3 we show how EMV-algebras can appear from different algebraic structures, namely from naturally ordered monoids and generalized effect algebras, respectively. In Section 4, we present a definition of a state, describe extremal states, prove a variant of the Krein--Mil'man theorem for states. We establish some topological properties of the state spaces. We define pre-states and strong pre-states in Section 5 as a weaker form of states, and Section 6 describes Jordan signed measures which form a Dedekind complete $\ell$-group and which will be used for the integral representation of states by $\sigma$-additive regular probability measures in Section 7. Finally, in Section 7, we present a variant of the Horn--Tarski theorem showing that every state on an EMV-subalgebra can be extended to a state on the EMV-algebra.

\section{Basic Notions on EMV-algebras}%2

In the section, we gather the main notions and results on EMV-algebras. We start with MV-algebras introduced originally in \cite{Cha}.

Let $M=(M;\oplus,^*,0,1)$ be an MV-algebra, i.e. an algebra of type $
\langle 2,1,0,0 \rangle$ such that $(M;\oplus,0)$ is a commutative monoid with a neutral element $0$ and  for $x,y \in M$, we have
\begin{itemize}
\item[{\rm (i)}]  $x^{**}=x$;
\item[{\rm (ii)}] $x\oplus 1 = 1$;
\item[{\rm (iii)}] $x\oplus (x\oplus y^*)^* = y\oplus (y \oplus x^*)^*$.
\end{itemize}
We define another total binary operation $\odot$ by $x\odot y = (x^*\oplus y^*)^*$. Then $M$ is a distributive lattice for which $x\vee y = x\oplus (x\oplus y^*)^*$ and $x\wedge y = x \odot (x^*\oplus y)$.

We note that
$$x^* = \min\{z \in M\colon z\oplus x= 1\},\quad x \in M.
$$
If $a$ is a {\it Boolean element} of $M$ or an {\it idempotent}, i.e. $a\oplus a = a$ or equivalently, $a\vee a^*=1$, then the set $B(M)$ of Boolean elements of $M$ is a Boolean algebra that is also an MV-subalgebra of $M$. If $a$ is a Boolean element of $M$, then the interval $M_a:=[0,a]$ can be converted into an MV-algebra $([0,a];\oplus,^{*_a},0,a)$, where $x^{*_a}= a \odot x^*$ for each $x\in [0,a]$. Then we have
$$
x^{*_a}= \min\{z \in [0,a]\colon z\oplus x = a\}.
$$
In the paper, we will write also $\lambda_a(x):= x^{*_a}$, $x\in [0,a]$, i.e. \begin{equation}\label{eq:lambda}
\lambda_a(x)= \min\{z \in [0,a]\colon z\oplus x = a\},
\end{equation}
and $(M_a;\oplus,\lambda_a,0,a)$ is an MV-algebra.

A prototypical example of MV-algebras is creating from unital Abelian $\ell$-groups $(G,u)$, where $G$ is an Abelian $\ell$-group with a fixed strong unit $u$. On the interval $\Gamma(G,u):=[0,u]=\{g \in G\colon 0\le g \le u\}$ we define for $x,y \in [0,u]$,  $x\oplus y = (x+y)\wedge u$ and $x^*=u-x$. Then $\Gamma(G,u)=([0,u];\oplus, ^*,0,u)$ is an MV-algebra, and by Munduci's result, see \cite{CDM}, every MV-algebra is isomorphic to some $\Gamma(G,u)$.

Inspired by these properties of MV-algebras, in \cite{DvZa}, the authors introduced EMV-algebras as follows.

Let $(M;\oplus,0)$ be a commutative monoid with a neutral element $0$. All monoids in the paper are assumed to be commutative. An element $a \in M$ is said to be an {\it idempotent} if $a\oplus a=a$. We denote by $\mathcal I(M)$ the set of idempotent elements of $M$; clearly $0 \in \mathcal I(M)$, and if $a,b \in I(M)$, then $a\oplus b \in \mathcal I(M)$.

According to \cite{DvZa}, an {\it EMV-algebra} is an algebra $(M;\vee,\wedge,\oplus,0)$ of type $\langle 2,2,2,0 \rangle$ such that
\begin{enumerate}
\item[(i)] $(M;\oplus,0)$ is a commutative monoid with a neutral element $0$;
\item[(ii)] $(M;\vee,\wedge,0)$ is a distributive lattice with the bottom element $0$;
\item[(iii)] for each idempotent $a \in \mathcal I(M)$, the algebra $([0,a];\oplus, \lambda_a,0,a)$ is an $MV$-algebra;

\item[(iv)] for each $x \in M$, there is an idempotent $a$ of $M$ such that $x\le a$.
\end{enumerate}
We notify that according to (\ref{eq:lambda}), we have for each $a\in \mathcal I(M)$
$$
\lambda_a(x)=\min\{z \in [0,a]\mid z\oplus x = a\}, \quad x\in [0,a].
$$
We note that the existence of a top element in an EMV-algebra is not assumed, and if it exists, then $M=(M;\oplus, \lambda_1,0,1)$ is an MV-algebra. We underline that every MV-algebra forms an EMV-algebra, every generalized Boolean algebra (or equivalently a Boolean ring) is an EMV-algebra. In addition, the set of EMV-algebras is a variety, see \cite[Thm 3.11]{DvZa}.

Moreover, the operation $\odot$ can be defined as follows: Let $x,y \in M$ and let $x,y\le a\in \mathcal I(M)$. Then
$$x\odot y :=\lambda_a(\lambda_a(x)\oplus \lambda_a(y)).
$$

Let $x,y \le a,b$, where $a,b \in \mathcal I(M)$. By \cite[Lem 5.1]{DvZa}, we have
\begin{equation}\label{eq:x<y}
x\odot \lambda_a(y)=x\odot \lambda_a(x\wedge y) =x\odot \lambda_b(x\wedge y)=x\odot \lambda_b(y).
\end{equation}

An {\it ideal} of an EMV-algebra is a non-void subset $I$ of $M$ such that (i) if $x\le y \in I$, then $x\in I$, and (ii) if $x,y \in I$, then $x\oplus y$. An ideal is {\it maximal} if it is a proper ideal of $M$ which is not properly contained in another proper ideal of $M$. Nevertheless $M$ has not necessarily a top element, every $M\ne \{0\}$ has a maximal ideal, see \cite[Thm 5.6]{DvZa}. We denote by $\mathrm{MaxI}(M)$ the set of maximal ideals of $M$. The {\it radical} $\mbox{\rm Rad}(M)$ of $M$ is the intersection of all maximal ideals of $M$.

A subset $A\subseteq M$ is called an {\it EMV-subalgebra} of $M$ if $A$ is closed under $\vee$, $\wedge$, $\oplus$ and $0$ and, for each $b\in \mathcal I(M)\cap A$, the set $[0,b]_A:=[0,b]\cap A$ is a subalgebra of the MV-algebra $([0,b];\oplus,\lambda_b,0,b)$.

Let $(M_1;\vee,\wedge,\oplus,0)$ and $(M_2;\vee,\wedge,\oplus,0)$ be EMV-algebras. A map $f:M_1\to M_2$ is called an {\it EMV-homomorphism}
if $f$ preserves the operations $\vee$, $\wedge$, $\oplus$ and $0$, and for each $b\in\mathcal I(M_1)$ and for each $x\in [0,b]$, $f(\lambda_b(x))= \lambda_{f(b)}(f(x))$.

As we already said, it can happen that an EMV-algebra $M$ has no top element, however, it can be embedded into an MV-algebra $N$ as its maximal ideal as it was proved in the basic result \cite[Thm 5.21]{DvZa}:

\begin{theorem}\label{th:embed}{\rm [Basic Representation Theorem]}
Every EMV-algebra $M$ is either an MV-algebra or $M$ can be embedded into an MV-algebra $N$ as a maximal ideal of $N$ such that every element $x\in N$ either belongs to the image of the embedding of $M$, or it is a complement of some element $x_0$ belonging to the image of the embedding of $M$, i.e. $x=\lambda_1(x_0)$.
\end{theorem}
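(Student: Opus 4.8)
The plan is to first reduce to a dichotomy governed by the idempotent structure. By axiom (iv) every $x\in M$ lies below some idempotent, so $M$ has a top element precisely when the lattice $\mathcal I(M)$ of idempotents has a greatest element $a_{\max}$ (any top element is forced to be idempotent, and conversely $a_{\max}$ dominates everything); in that case $M=[0,a_{\max}]$ is already an MV-algebra by axiom (iii), and the first alternative of the statement holds. So I would assume henceforth that $\mathcal I(M)$ has no greatest element, equivalently $M$ has no top, and construct the ambient MV-algebra $N$ by a \emph{doubling extension} that formally adjoins all the missing complements.

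Concretely, I would set $N:=M\sqcup \bar M$, a disjoint union of $M$ with a formal copy $\bar M=\{\bar x\colon x\in M\}$, thinking of $\bar x$ as the as-yet nonexistent complement $\lambda_1(x)$. The involution is the swap $\lambda_1(x)=\bar x$, $\lambda_1(\bar x)=x$, with $0:=0$ and top $1:=\bar 0$. Addition is given by the three rules
\begin{align*}
x\oplus y &= x\oplus y, & x\oplus \bar y &= \overline{y\ominus x}, & \bar x\oplus \bar y &= \overline{x\odot y},
\end{align*}
for $x,y\in M$, where $\odot$ is the total operation already available on $M$ and $y\ominus x:=y\odot\lambda_a(x\wedge y)$ for any idempotent $a\ge x,y$. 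The point making these rules legitimate is that, by \eqref{eq:x<y}, both $\odot$ and $\ominus$ are well defined on all of $M$ independently of the chosen idempotent bound, so no top element of $M$ is ever invoked; the formulas simply record the MV-identities $x\oplus y^*=(x^*\odot y)^*$ and $x^*\oplus y^*=(x\odot y)^*$.

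Then I would verify that $(N;\oplus,\lambda_1,0,1)$ satisfies the MV-algebra axioms (i)--(iii), and I expect this to be the main obstacle: commutativity, $x\oplus 1=1$, and above all the associativity of $\oplus$ together with the identity $x\oplus(x\oplus y^*)^*=y\oplus(y\oplus x^*)^*$ must be checked across all combinations of arguments drawn from $M$ and $\bar M$, each case reducing to an identity among $\oplus,\odot,\ominus$ inside a single interval $[0,a]$ where it follows from the MV-algebra $[0,a]$ and the coherence relation \eqref{eq:x<y}. Once $N$ is an MV-algebra, the inclusion $x\mapsto x$ is readily checked to be an EMV-embedding of $M$ onto the $M$-part (no $\bar M$-element lies below an element of $M$, so the intervals $[0,b]$ are unchanged), and the map $\pi\colon N\to\{0,1\}$ sending $M\mapsto 0$ and $\bar M\mapsto 1$ is, directly from the three rules, a surjective MV-homomorphism onto the two-element simple MV-algebra; hence its kernel, which is exactly the image of $M$, is a maximal ideal of $N$. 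The final assertion — that every $x\in N$ either lies in the image of $M$ or equals $\lambda_1(x_0)$ for some $x_0$ in that image — holds by the very construction $N=M\sqcup\bar M$. A cleaner but more machinery-heavy alternative would be to write $M=\bigcup_{a\in\mathcal I(M)}[0,a]$ as a directed union, pass to the $\ell$-groups $G_a$ with $[0,a]=\Gamma(G_a,a)$, form the direct limit $G=\varinjlim G_a$, and realize $N$ as $\Gamma(H,u)$ for a unital $\ell$-group $H$ obtained by adjoining a strong unit to $G$; this trades the case analysis above for the theory of the $\Gamma$-functor and direct limits.
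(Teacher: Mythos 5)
First, a caveat about the comparison you asked for implicitly: this paper does not prove Theorem \ref{th:embed} at all --- it is imported verbatim from \cite[Thm 5.21]{DvZa} --- so there is no in-paper argument to measure your proposal against; I can only judge it on its own terms. On those terms your strategy is the right one. The opening dichotomy is correct (by axiom (iv) a top element must itself be idempotent, and a greatest idempotent dominates all of $M$, so ``$M$ has a top'' is equivalent to ``$\mathcal I(M)$ has a greatest element,'' in which case axiom (iii) makes $M$ an MV-algebra). The three addition rules on $N=M\sqcup\bar M$ are exactly the formulas forced by the MV-identities $x\oplus y^{*}=(y\odot x^{*})^{*}$ and $x^{*}\oplus y^{*}=(x\odot y)^{*}$, and you correctly identify (\ref{eq:x<y}) as the reason $\odot$ and $\ominus$ are well defined on $M$ without a top. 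The maximality argument via the homomorphism onto the two-element MV-algebra is clean, and the claim that $[0,b]_N=[0,b]_M$ for idempotents $b\in M$ is right, since $\bar y\odot\lambda_1(x)=\overline{y\oplus x}\neq 0$ shows no element of $\bar M$ lies below an element of $M$.

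The weak point is precisely the one you flag yourself: the entire content of the theorem sits in the verification that the case-defined $\oplus$ satisfies the commutative-monoid and MV axioms, and in your write-up this is announced rather than carried out. It does go through --- for instance, associativity in the mixed case $(x\oplus y)\oplus\bar w=x\oplus(y\oplus\bar w)$ reduces, after using (\ref{eq:x<y}) to discard the meets, to $w\odot\lambda_a(x\oplus y)=\bigl(w\odot\lambda_a(y)\bigr)\odot\lambda_a(x)$ inside a single MV-algebra $[0,a]$, and MV-axiom (iii) in the mixed case reduces to standard identities such as $u\odot(\lambda_a(u)\oplus z)=u\wedge z$ --- but a complete proof must write out every case (three for associativity, three for axiom (iii), plus the unit and involution laws), and until that is done the proposal is a detailed plan rather than a proof. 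Your closing alternative (write $M=\bigcup_a[0,a]$ as a directed union, pass to the unital $\ell$-groups $G_a$ with $[0,a]=\Gamma(G_a,a)$, form $G=\varinjlim G_a$, and realize $N$ as $\Gamma(H,u)$ for a unitalization such as a lexicographic extension $\mathbb Z\lex G$) is a legitimate way to buy all of these axioms for free from the $\Gamma$-functor, at the cost of checking that the connecting maps form a direct system and that $M$ sits in $\Gamma(H,u)$ as a maximal ideal; either route is acceptable, but one of them has to be completed.
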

The MV-algebra $N$ from the latter theorem is said to be {\it representing} the EMV-algebra $M$. An analogous result for generalized Boolean algebras was established in \cite[Thm 2.2]{CoDa}.

For other unexplained notions and results, please consult with the papers \cite{DvZa, DvZa1}.

\section{EMV-algebras and Other Algebraic Structures}%3

In the section, we show how EMV-algebras can appear from different algebraic structures. The first result deals with naturally ordered monoids and the second one concerns generalized effect algebras.

A monoid $(M;\oplus,0)$ with a fixed partial order $\le$ is said to be {\it ordered} if $x\le y$ for $x,y \in M$ implies $x\oplus z\le y\oplus z$ for each $z\in M$, and we write $(M;\oplus,0,\le)$ for it. We say that an ordered monoid $(M;\oplus,0,\le)$ is {\it naturally ordered} if, for $x,y \in M$, $x\le y$ iff there is $z\in M$ with $x\oplus z = y$. We note that if such an order exists, is unique. Let $(M;\oplus,0)$ be a monoid. An element $a \in M$ is said to be an {\it idempotent} if $a\oplus a = a$. If $\mathcal I(M)$ is the set of idempotents of $M$, then (i) $0 \in \mathcal I(M)$, (ii) if $a,b\in \mathcal I(M)$, then $a\oplus b \in \mathcal I(M)$.

\begin{proposition}\label{pr:000}
Let $(M;\oplus,0,\le)$ be a naturally ordered commutative monoid with a neutral element $0$ satisfying the following conditions
\begin{itemize}
\item[{\rm (E1)}] for all $a \in \mathcal I(M)$, the algebra $([0,a];\oplus,\lambda_a,0,a)$ is an MV-algebra;
\item[{\rm (E2)}] for each $x \in M$, there is an idempotent $a \in M$ such that $x\le a$.
\end{itemize}
Then $\le$ is a lattice order and $M$ is a distributive lattice with respect to $\le$ with the least element $0$. Moreover, $(M;\vee,\wedge,\oplus,0)$ is an EMV-algebra.
\end{proposition}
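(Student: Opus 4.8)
The plan is to reduce every claim to the MV-algebra structure that (E1) places on each interval $[0,a]$, $a\in\mathcal I(M)$, so that all lattice-theoretic assertions follow from the known facts inside a single such interval. First I would record the easy preliminaries. Since $0\oplus x=x$, the natural order gives $0\le x$ for all $x$, so $0$ is the least element. Given finitely many elements, (E2) supplies idempotents above each, and because a sum of idempotents is again idempotent with $a\le a\oplus b$, any finite subset of $M$ lies in a common interval $[0,c]$ with $c\in\mathcal I(M)$. I would also isolate the crucial closure fact: if $x,y\le a\in\mathcal I(M)$, then compatibility of $\oplus$ with $\le$ gives $x\oplus y\le a\oplus a=a$, so the monoid operation restricted to $[0,a]$ never leaves $[0,a]$; consequently the MV-addition on $[0,a]$ furnished by (E1) coincides with the ambient monoid $\oplus$, and no truncation occurs.

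The technical heart is a consistency lemma: for $a\in\mathcal I(M)$ and $x,y\in[0,a]$, the order of the MV-algebra $([0,a];\oplus,\lambda_a,0,a)$ agrees with the natural order $\le$. For the forward direction, if $x$ precedes $y$ in the MV-order, then the standard MV-identity $y=x\oplus(y\odot\lambda_a(x))$ exhibits a $z=y\odot\lambda_a(x)\in[0,a]$ with $x\oplus z=y$, which by the closure fact is the monoid $\oplus$, so $x\le y$. Conversely, if $x\le y$ naturally, pick $z\in M$ with $x\oplus z=y$; then $z\le y\le a$ forces $z\in[0,a]$, and $x\oplus z=y$ inside the MV-algebra $[0,a]$ yields $x\le y$ in the MV-order.

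With this lemma I would establish that $\le$ is a lattice order. Meets are immediate: any common lower bound $v$ of $x,y$ satisfies $v\le x\le c$, hence lies in $[0,c]$, so the MV-meet $x\wedge_c y$ computed in $[0,c]$ is already the greatest lower bound of $\{x,y\}$ in all of $M$. Joins are the main obstacle, since an arbitrary upper bound $w$ of $x,y$ need not lie in $[0,c]$. Here I would enlarge the interval: choosing $d\in\mathcal I(M)$ with $w\le d$ and setting $e=c\oplus d\in\mathcal I(M)$, one has $x,y\le c\le e$ and $w\le e$. Since $c$ is an upper bound of $x,y$ in $[0,e]$, the MV-join satisfies $x\vee_e y\le c$, so $x\vee_e y\in[0,c]$; as both $x\vee_c y$ and $x\vee_e y$ are least upper bounds of $\{x,y\}$ for the common order $\le$ (by the lemma), a routine comparison gives $x\vee_c y=x\vee_e y$. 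Because $w$ is also an upper bound in $[0,e]$, we obtain $x\vee_c y=x\vee_e y\le w$. Thus $x\vee_c y$ is genuinely the least upper bound in $M$, and $\le$ is a lattice order with bottom $0$.

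Finally, for distributivity and the EMV-axioms I would place all three elements $x,y,z$ into one interval $[0,c]$ with $c\in\mathcal I(M)$. By the join/meet computations above, every lattice term in $x,y,z$ evaluated in $M$ equals the same term evaluated in the distributive lattice reduct of the MV-algebra $[0,c]$, so the distributive law transfers verbatim from $[0,c]$ to $M$. It then remains only to match the definition of an EMV-algebra: axiom (i) is the given monoid structure, (ii) is the distributive lattice with least element $0$ just proved, while (iii) and (iv) are literally (E1) and (E2). Hence $(M;\vee,\wedge,\oplus,0)$ is an EMV-algebra.
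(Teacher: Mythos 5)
Your proof is correct, and it reaches the conclusion by a genuinely different route than the paper. The paper's proof invests most of its effort in a computational gluing argument: it invokes Mundici's representation $[0,c]\cong\Gamma(G,u_c)$ to prove the identity $\lambda_c(x)=\lambda_a(x)\oplus\lambda_c(a)$ for idempotents $a\le c$, uses this to show that $x\odot_a y$ is independent of the idempotent $a$, and only then deduces that $x\vee_a y=(x\odot_a\lambda_a(y))\oplus y$ is independent of $a$; the fact that the resulting $\vee,\wedge$ are the lattice operations of the original order $\le$ is dispatched in a closing one-liner. You instead isolate an order-consistency lemma (the MV-order on $[0,a]$ coincides with the restriction of the natural order, because any witness $z$ of $x\oplus z=y$ automatically satisfies $z\le y\le a$) and then prove directly that $x\wedge_c y$ and $x\vee_c y$ are the greatest lower and least upper bounds in \emph{all} of $M$ --- your enlargement $e=c\oplus d$ handles upper bounds $w$ lying outside $[0,c]$, a point the paper leaves implicit. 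This makes independence of the choice of idempotent automatic (lubs and glbs are unique) and avoids the $\ell$-group computations entirely. What you give up is the by-products of the paper's calculation: the formula $\lambda_c(x)=\lambda_a(x)\oplus\lambda_c(a)$ and the well-definedness of $\odot$ are reused elsewhere in the paper (e.g.\ in the identity (\ref{eq:x<y})), whereas your argument yields only what the proposition literally asserts. For the statement at hand, your approach is the more economical one.
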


\begin{proof}
We start with a note that according to (\ref{eq:lambda}), we have that the element $\lambda_a(x)=\min\{z\in[0,a]\colon x\oplus z=a\} $
exists in $M$ for all $x\in [0,a]$.

(i) First we show that if $x\le a\le c$ and $a,c \in \mathcal I(M)$, then $\lambda_c(x)=\lambda_a(x)\oplus \lambda_c(a)$. Since $[0,c]$ is an MV-algebra, using Mundici's result, see e.g. \cite{CDM}, there is an Abelian unital $\ell$-group $(G,u_c)$  such that $[0,c]\cong \Gamma(G,u_c)$ and without loss of generality, we can assume that $[0,c]=\Gamma(G,u_c)$ and $c=u_c$. Since $[0,a]\subseteq [0,c]$, and $[0,a]$ is also an MV-algebra, then $[0,a]=\Gamma(G(a),a)$, where $G(a)$ is an Abelian $\ell$-group such that $G(a)=\{g \in G\mid \exists\, n\in \mathbb N,\ g\le na\}$, $G(a)$ is an $\ell$-subgroup of the $\ell$-group $G$.  If $x\le a$, then $\lambda_a(x)=a-x$, $\lambda_c(x)=c-x$, and $c=x\oplus \lambda_c(x)= x+(c-x)$, $a=x \oplus \lambda_a(x)=a+(a-c)$, where $-$ and $+$ is the group subtraction and the group addition taken from $G(a)$ and $G$. Then $\lambda_a(x)\oplus \lambda_c(a)=(a-x)\oplus (c-a)= ((a-x)+(c-a))\wedge c= c-x=\lambda_c(x)$.

In addition, $c=a\oplus \lambda_c(a)$ implies that $a$ is a Boolean element of the MV-algebra $[0,c]$, hence, $\lambda_c(a)$ is an idempotent of $[0,c]$, consequently, it is an idempotent of $M$.

(ii) Let $x,y \le a,b$, where $a,b\in \mathcal I(M)$. There is an idempotent $c\in \mathcal I(M)$ with $a,b\le c$. In the MV-algebra $[0,a]$, there is a distributive lattice structure with respect to $\vee_a$ and $\wedge_a$. Analogously, if $x,y \le a \le c\in \mathcal I(M)$, we have a distributive lattice structure $\vee_c$ and $\wedge_c$. We define also $x\odot_a y=\lambda_a(\lambda_a(x)\oplus \lambda_a(y))$. Similarly, let $a\le c \in \mathcal I(M)$, and $x\odot_c y=\lambda_c(\lambda_c(x)\oplus \lambda_c(y))$.

Using (i), we have
\begin{eqnarray*}
\lambda_c(\lambda_c(x)\oplus \lambda_c(y)) &=& \lambda_c\big(\lambda_a(x)\oplus \lambda_c(a)\oplus \lambda_a(y)\oplus \lambda_c(a)\big)\\
&=& \lambda_c\big(\lambda_a(x)\oplus \lambda_a(y)\oplus \lambda_c(a)\big)\\
&=& \lambda_c\big(\lambda_a(x)\oplus \lambda_a(y)\big)\odot_{_c} \lambda_c(\lambda_c(a))=\lambda_c\big(\lambda_a(x)\oplus \lambda_a(y)\big)\odot_{_c} a\\
	&=& \lambda_c\big(\lambda_a(x)\oplus \lambda_a(y)\big)\wedge_c a=\Big(\lambda_a\big(\lambda_a(x)\oplus \lambda_a(y)\big)\oplus \lambda_c(a)\Big)\wedge_c a \\
	&=& \Big(\lambda_a\big(\lambda_a(x)\oplus \lambda_a(y)\big)\vee_c \lambda_c(a)\Big)\wedge_c a=\lambda_a\big(\lambda_a(x)\oplus \lambda_a(y)\big)\wedge_c a \\
	&=& \lambda_a\big(\lambda_a(x)\oplus \lambda_a(y)\big).
\end{eqnarray*}

In a similar way we have $x \odot_b y= x\odot_c y$, that is $x \odot_a y= x\odot_c y= x\odot_b y$ which proves that we can define $x\odot y$ as $x\odot y =x\odot_a y$ whenever $x,y \le a\in \mathcal I(M)$.

(iii) Let $x\le y\le a,b\le c$ for $a,b,c \in \mathcal I(M)$. Then by (i) and using the distributivity of $\oplus$ with respect to $\vee$ and $\wedge$ in any MV-algebra, we have

$$
y \odot \lambda_c(x)= y \odot (\lambda_a(x)\oplus \lambda_c(a))=y \odot (\lambda_a(x)\vee_c \lambda_c(a))= (y \odot \lambda_a(x)) \vee_c (y\odot\lambda_c(a))
$$
and
$$y\odot\lambda_c(a) \le y \odot \lambda_c(y)=0$$
because  for $y\le a\le c$ we have $\lambda_c(a)\le \lambda_c(y)$.  This implies $y\odot  \lambda_a(x)=y\odot \lambda_c(x)$. In the same way we have $y\odot  \lambda_b(x)=y\odot \lambda_c(x)$ establishing
$$y\odot  \lambda_a(x)=y\odot \lambda_b(x) \quad \text{ if } \quad x\le y.$$

Now let $x,y \le a,b$ for some $a,b \in \mathcal I(M)$. Then $x \odot \lambda_a(x\wedge y)= x \odot (\lambda_a(x)\vee_a \lambda_a(y))= (x\odot \lambda_a(x))\vee_a (x\odot \lambda_a(y))= x \odot \lambda_a(y)= x \odot \lambda_b(x\wedge y) = x\odot \lambda_b(y)$, i.e.
$$y\odot  \lambda_a(x)=y\odot \lambda_b(x) \quad \text{ if } \quad x, y \le a,b.$$

(iv) Finally,  let $x,y \le a,b$, where $a,b\in \mathcal I(M)$.
We have for the suprema $x\vee_a y$ taken in the MV-algebra $[0,a]$, $x\vee_a y= (x\odot_a \lambda_a(y))\oplus y= (x\odot \lambda_a(y))\oplus y$ and for the supremum  $x\vee_b y$ taken in the MV-algebra $[0,b]$, $x\vee_a y= (x\odot_b \lambda_b(y))\oplus y$. Then by the latter equality, we have
$x\vee_a y = (x\odot_a \lambda_a(y))\oplus y = (x\odot \lambda_a(y))\oplus y= (x\odot \lambda_b(y))\oplus y = (x\odot_b \lambda_a(y))\oplus y= x\vee_b y$.

In a dual way, if $x,y \le a,b\le c$, where $a,b,c\in \mathcal I(M)$, then
we have $x\wedge_a y =\lambda_a(\lambda_a(x)\vee\lambda_a(y))$ and $x\wedge_c y =\lambda_c(\lambda_c(x)\vee\lambda_c(y))$. Since $x\wedge_a y \in [0,a]\subseteq [0,c]$, we have $x\wedge_a y \le x\wedge_c y$. On the other hand, from  $x\wedge_c y \le x,y \le a$, we get $x\wedge_c y \le x\wedge_a x$, so that $x\wedge_a y = x\wedge_c y = x\wedge_b y$. Whence, we can define $\vee$ and $\wedge$ in the whole $M$ as follows $x\vee y= x\vee_a y$ and $x\wedge y = x\wedge_a y$ whenever $x,y \le a\in \mathcal I(M)$.  Since $\wedge_a$ and $\vee_a$ are distributive in the    MV-algebra $[0,a]$, we have $(M;\vee,\wedge,0)$ is a distributive lattice with the least element $0$.

In addition, for the original ordering $\le$ on $M$, we have $x\le y$ iff $x\vee y =y$.
\end{proof}

We remind that according to \cite{DvPu}, an algebra $(E;+,0)$, where $+$ is a partial operation on $E$, is said to be a {\it generalized effect algebra} (GEA for short) if, for all $x,y,z \in E$, we have
\begin{itemize}
\item[(i)] if $x+y$ is defined, then $y+x$ is defined and $x+y=y+x$;
\item[(ii)] if $x+y$ and $(x+y)+z$ are defined, then $y+z$ and $x+(y+z)$ are defined an $(x+y)+z= x+(y+z)$;
\item[(iii)] $x+0=x$;
\item[(iv)] if $x+y=x+z$, then $y=z$;
\item[(v)] if $x+y=0$, then $x=y=0$.
\end{itemize}

If a GEA $E$ has a top element, $(E;+,0,1)$ is said to be an {\it effect algebra}. The original axioms of effect algebras are as follows, see \cite{FoBe}:
\begin{itemize}
\item[(i)] if $x+y$ is defined, then $y+x$ is defined and $x+y=y+x$;
\item[(ii)] if $x+y$ and $(x+y)+z$ are defined, then $y+z$ and $x+(y+z)$ are defined and $(x+y)+z= x+(y+z)$;
\item[(iii)] for every $x\in E$, there is a unique element $x'\in E$ (called a {\it orthosuplement} of $x$) such that $x+x'=1$;
\item[(iv)] if $1+x$ is defined, then $x=0$.
\end{itemize}

We note that a non-void subset $I$ of a GEA $(E;+,0)$ is said to be a {\it GEA-ideal} (simply an ideal) if (i) $x\le y \in I$ implies $x \in I$, and (ii) if $x,y \in I$ and $x+y$ is defined in $E$, then $x+y \in I$. In every GEA $(E;\oplus,0)$ we can define an order $\le :=\le_E$ by $x\le y$ iff there is $z \in E$ such that $x+z=y$; if such an element exists, it is unique and we write also $z=y-x$. We call $\le_E$ also as a {\it GEA-order} induced from the GEA $E$. If $E$ is under $\le=\le_E$ a lattice, we call it a lattice GEA.

A GEA $(E;\oplus,0)$ satisfies the {\it Riesz Decomposition Property} if, given elements $x_1,x_2, y_1,y_2\in E$ such that $x_1+x_2=y_1+y_2$, there are four elements $c_{11}, c_{12}, c_{21}, c_{22} \in E$ such that $x_1=c_{11}+c_{12}$, $x_2 = c_{21} + c_{22}$, $y_1= c_{11} +c_{21}$ and $y_2= c_{12} + c_{22}$. In addition, let $E$ satisfy RDP. If $x\le u+v$ for $x,u,v\in E$, there are $u_1,v_1\in E$ with $u_1\le u$ and $v_1\le v$ such that $u=u_1+v_1$.

If $(E;+,0)$ is a GEA and $x\in M$ is a fixed element, then $E_x=([0,x];+,0,x)$, where $[0,x]=\{y\in E\colon 0\le y \le x\}$, is an effect algebra; the orthosuplement of $y\in [0,x]$  is the element $y^{'x}=x-y$.

It is well-known, see e.g. \cite[Thm 1.8.12]{DvPu}, that if an effect algebra $E$ is a lattice effect algebra satisfying RDP, we can define a binary operation $x\oplus y=x+(y\wedge x')$ for all $x,y\in E$ such that $(E;\oplus,',0,1)$ is an MV-algebra. In particular, if $E$ is a lattice GEA with RDP, then every $([0,a];\oplus_a,^{'a},0,a)$ ($a\in E$), where
\begin{equation}\label{eq:RDP}
x\oplus_a y:= x+ (y\wedge x^{'a})=x+(y\wedge (a-x)),\quad x,y \in [0,a],
\end{equation}
is an MV-algebra.

\begin{proposition}\label{pr:state1}
Let $(M;\vee,\wedge,\oplus,0)$ be an EMV-algebra. We define a partial operation $+$ on $M$ in such a way that $x+y$ is defined iff $x\odot y = 0$, and in such a case, $x+y:=x\oplus y$. Then $(M;+,0)$ is a generalized effect algebra satisfying the Riesz Decomposition Property where the order on $M$ induced by $\wedge,\vee$ and the GEA-order induced from  $(M;+,0)$ coincide.
\end{proposition}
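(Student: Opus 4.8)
The plan is to reduce everything to the MV-algebra setting by means of the Basic Representation Theorem (Theorem \ref{th:embed}). If $M$ is already an MV-algebra, put $N:=M$; otherwise let $N$ be an MV-algebra representing $M$, so that $M$ is identified with a maximal ideal of $N$. Recall that from any MV-algebra $N$ one obtains an effect algebra $(N;+_N,0,1)$ by declaring $x+_N y$ defined iff $x\odot y=0$ and setting $x+_N y:=x\oplus y$, and that this is a lattice effect algebra satisfying $\RDP$, a well-known fact tied to \cite[Thm 1.8.12]{DvPu}. The whole point is then to show that the partial operation $+$ on $M$ is nothing but the restriction of $+_N$ to $M$, and that the GEA axioms and $\RDP$ descend from $N$ to the ideal $M$.

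First I would check that the embedding $f$ of $M$ into $N$ is compatible with $\odot$. Since it is an EMV-embedding it preserves $\oplus$, the lattice operations, and for every idempotent $a$ and $x\in[0,a]$ the operation $\lambda_a$; as $x\odot y=\lambda_a(\lambda_a(x)\oplus\lambda_a(y))$ for any idempotent bound $a\ge x,y$, the element $x\odot y$ computed in $M$ is carried to the product computed in $[0,f(a)]\subseteq N$. A short $\ell$-group computation, using that $f(a)$ is Boolean in $N$ so that $N\cong[0,f(a)]\times[0,\lambda_1(f(a))]$, shows that the MV-product of the factor $[0,f(a)]$ coincides with the ambient product $\odot$ of $N$ on $[0,f(a)]$. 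Hence $x\odot y=0$ in $M$ iff the images multiply to $0$ in $N$, and whenever this holds $x+y=x\oplus y$ is the image of $x+_N y$. Since $M$ is closed under $\oplus$, this value lies in $M$; thus $+$ is precisely $+_N$ restricted to $M\times M$ with values in $M$.

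With this identification, $(M;+,0)$ is a substructure of the effect algebra $(N;+_N,0,1)$ that contains $0$, is closed under the restricted partial sum, and is downward closed because $M$ is an ideal of $N$. Axioms (i), (iii), (iv), (v) of a GEA are universally quantified implications about defined sums and therefore pass immediately to such a substructure. For associativity (ii) I would invoke downward closure: if $x+y$ and $(x+y)+z$ are defined in $M$, then in $N$ one has $y+_N z\le x+_N(y+_N z)=(x+y)+z\in M$, so $y+_N z\in M$ and the two bracketings already agree in $M$. The same idea yields $\RDP$: given $x_1+x_2=y_1+y_2$ in $M$, apply $\RDP$ in $N$ to obtain witnesses $c_{11},c_{12},c_{21},c_{22}\in N$; each $c_{ij}$ lies below some $x_i$ or $y_j\in M$, so $c_{ij}\in M$ and the decompositions hold in $M$.

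Finally, for the coincidence of orders: if $x\le y$ in the lattice order of $M$, then $x\le y$ in $N$, so there is $z\in N$ with $x+_N z=y$, and $z\le y\in M$ forces $z\in M$, giving $x+z=y$ in $M$; conversely $x+z=y$ yields $x\le x\oplus z=y$. I expect the only genuinely delicate point to be the compatibility of $\odot$ across the embedding, i.e. that the locally defined EMV-product is indeed the restriction of the MV-product of the representing algebra $N$. Once this is secured, via preservation of every $\lambda_a$ together with the Boolean-interval computation, the GEA axioms, $\RDP$, and the coincidence of the two orders all follow from $M$ being a downward-closed, $\oplus$-closed ideal of $N$.
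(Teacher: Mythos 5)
Your argument is correct, but it takes a genuinely different route from the paper. The paper's proof is intrinsic: it never leaves $M$, choosing for each finite set of elements an idempotent $a\in\mathcal I(M)$ dominating them and verifying each GEA axiom, cancellativity, $\RDP$ and the order coincidence inside the local MV-algebra $([0,a];\oplus,\lambda_a,0,a)$ (e.g.\ associativity via $z\le\lambda_a(x\oplus y)=\lambda_a(x)\odot\lambda_a(y)$, cancellation via the cancellation law of $[0,a]$, and $x\le y$ iff $y=x+(y\odot\lambda_a(x))$). You instead invoke the Basic Representation Theorem (Theorem \ref{th:embed}) to realize $M$ as a downward-closed, $\oplus$-closed ideal of an MV-algebra $N$, observe that $(N;+_N,0,1)$ is a lattice effect algebra with $\RDP$, and pull everything down to the ideal. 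That transfer is sound: the one point you rightly flag as delicate, that the locally defined product $\lambda_a(\lambda_a(x)\oplus\lambda_a(y))$ agrees with the ambient MV-product of $N$ for $x,y\le a$ with $a$ Boolean in $N$, does hold (for Boolean $a$ one has $\lambda_a(u)=a\wedge u^{*}$ and $a\wedge(u\oplus v)=(a\wedge u)\oplus(a\wedge v)$, from which $\lambda_a(\lambda_a(x)\oplus\lambda_a(y))=a\wedge(x\odot y)=x\odot y$), and it is essentially the content of \cite[Lem 5.1]{DvZa} quoted as (\ref{eq:x<y}); but you should write this computation out rather than gesture at it, since the whole reduction hinges on it. The trade-off: your approach buys economy and makes visible the general principle that an ideal of a lattice effect algebra with $\RDP$ inherits the GEA structure and $\RDP$ by downward closure, while the paper's proof is more elementary, self-contained, and avoids leaning on the comparatively heavy embedding theorem for a statement that only needs the local MV-structure already built into the definition of an EMV-algebra.
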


\begin{proof}
First we note that $x\odot y=0$ iff there is an idempotent $a \in \mathcal I(M)$ such that $x\le \lambda_a(y)$ with $x,y \le a$, consequently, iff $x\le \lambda_a(y)$ for each idempotent $a \in \mathcal I(M)$ such that $x,y \le a$.

The commutativity is evident. To prove the associativity, assume $x+y$ and $(x+y)+z$ are defined. Choose an idempotent $a \in \mathcal I(M)$ with $x,y,z\le a$.
Therefore, $z\le \lambda_a(x+y)=\lambda_a(x\oplus y)=\lambda_a(x)\odot \lambda_a(y)\le \lambda_a(x),\lambda_a(y)$. Hence, $y+z$ is defined in $M$ and $y+z = y\oplus z\le y \oplus (\lambda_a(x)\odot \lambda_a(y))=y\vee \lambda_a(x)=\lambda_a(x)$. Therefore, $x+(y+z)=(x+y)+z$.

Clearly, $x+0$ is defined for each $x\in M$ and $x+0=x$.

Let $x+y=x+z$, and let $x,y,z \le a \in \mathcal I(M)$.
Then
$$ a= (x+y)\oplus \lambda_a(x+y)= (x+y)+\lambda_a(x+y)=(x+z)+\lambda_a(x+y).$$
Using the cancelation law holding for $+$ in the MV-algebra $[0,a]$, we have $y=z$.

%Then $x= x\wedge (x+y)= x\odot (\lambda_a(x)\oplus (x+y))=x\odot (\lambda_a(x)\oplus (x+z))= z\wedge (x\oplus z)=z$.

Finally, let $x+y = 0$. Then $x,y \le x\oplus y = x+y=0$.

Let $\le$ be the order generated by the lattice operations $\vee$ and $\wedge$ defined in the EMV-algebra $M$.  We set $x\preceq y$ iff there is $z\in M$ such that $x+z=y$. Take an idempotent $a\in \mathcal I(M)$ such that $x,y,z\le a$. Then $x+z=x\oplus z =y$ which means also $x\le y$. Now let $x\le y$, then $y = x\vee y= x\oplus (y \odot \lambda_a(x))= x +(y \odot \lambda_a(x))$, i.e. $x\preceq y$ and $\preceq = \le$, so that $\preceq$ is a lattice order, and $M$ is a distributive generalized effect algebra.

If $x_1,x_2,y_1,y_2\in M$ satisfy $x_1+x_2=y_1+y_2$, there is an idempotent $a \in M$ such that $x_1,x_2,y_1,y_2\le a$. Then $x_1,x_2,y_1,y_2 \in [0,a]$, and since $([0,a];\oplus,\lambda_a,0,a)$ is an MV-algebra, it satisfies RDP and the partial addition $+$ coincides with the partial sum induced from the MV-algebra $[0,a]$. Moreover, the order in the MV-algebra $[0,a]$ coincides with the original order on $M$ restricted to $[0,a]$, therefore, RDP holds in $(M;\oplus,0)$.
\end{proof}

Now we show how from a GEA we can derive an EMV-algebra.  Let $(E;+,0)$ be a lattice GEA with RDP. An element $a\in E$ is said to be {\it Boolean}, if for each $b\in E$ with $b\ge a$, we have $a\oplus_b a=a$, where $\oplus_b$ is defined by (\ref{eq:RDP}).

\begin{proposition}\label{pr:EMV-GEA}
Let $(E;+,0)$ be a lattice GEA satisfying RDP and let, for every $x \in E$, there be a Boolean element $a\in E$ such that $x\le a$. Then there is a binary operation $\oplus$ on $E$ such that $(E;\vee,\wedge,\oplus,0)$ is an EMV-algebra, and an element $a\in E$ is Boolean if and only if $a\oplus a = a$. Moreover, the partial addition derived from the EMV-algebra $(E;\vee,\wedge,\oplus,0)$ coincides with the original $+$ in the GEA $E$.
\end{proposition}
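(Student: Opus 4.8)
The plan is to build the global operation $\oplus$ out of the local MV-operations $\oplus_a$ supplied by (\ref{eq:RDP}). For $x,y\in E$ I would choose a Boolean element $a$ with $x,y\le a$ (such an $a$ exists: apply the hypothesis to the join $x\vee y$), and set $x\oplus y:=x\oplus_a y=x+(y\wedge(a-x))$. The whole proposition rests on showing that this value is independent of the chosen $a$; once that is known, every identity I need can be checked inside a single MV-algebra $[0,a]$.

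The technical heart is therefore the compatibility claim: if $d,c$ are Boolean with $d\le c$ and $x,y\le d$, then $x\oplus_d y=x\oplus_c y$. To prove it I would first note that, $d$ being Boolean, $d\oplus_c d=d$, so $d$ is an idempotent (hence a Boolean element) of the MV-algebra $([0,c];\oplus_c,\lambda_c,0,c)$. Representing $[0,c]$ as $\Gamma(G,c)$ by Mundici's functor, $x\oplus_c y=(x+y)\wedge c$ and $x\oplus_d y=(x+y)\wedge d$; from $(d+d)\wedge c=d$ and $x,y\le d$ we get $(x+y)\wedge c\le(d+d)\wedge c=d$, whence $(x+y)\wedge c=(x+y)\wedge d$, i.e. $x\oplus_d y=x\oplus_c y$. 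Well-definedness is then immediate: given two Boolean bounds $a,b$ of $x,y$, pick a Boolean $c\ge a\vee b$ and apply the claim twice to obtain $x\oplus_a y=x\oplus_c y=x\oplus_b y$.

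With $\oplus$ well defined I would check that $(E;\oplus,0)$ is a commutative monoid: commutativity and neutrality of $0$ are local, and associativity follows by choosing one Boolean $a$ above $x\vee y\vee z$ and transferring everything into $[0,a]$, using $u\oplus v\le a$ whenever $u,v\le a$. Next I would identify the idempotents. The direction Boolean $\Rightarrow a\oplus a=a$ is the computation $a\oplus_a a=a+(a\wedge 0)=a$. Conversely, from $a\oplus a=a$ and a Boolean $d\ge a$ one gets $a\oplus_d a=a$, so $a$ is idempotent in $[0,d]$; for an arbitrary $b\ge a$ choose a Boolean $c\ge b\vee d$, use the compatibility claim to see $a$ is idempotent in $[0,c]$, and then read off in $\Gamma(G,c)$ that $a\wedge(b-a)\le a\wedge(c-a)=0$, giving $a\oplus_b a=a$; hence $a$ is Boolean. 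Thus $\mathcal I(E)$ (the idempotents of $\oplus$) coincides with the set of Boolean elements.

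Finally I would show that the GEA/lattice order $\le$ equals the natural order of $(E;\oplus,0)$: if $x\le y$, the effect-algebra sum $x+(y-x)=y$ is a sum of orthogonal elements, so $x\oplus(y-x)=x+(y-x)=y$, and conversely $x\oplus z\ge x$ holds in every $[0,a]$. Then I simply invoke Proposition \ref{pr:000}, whose hypotheses (E1) and (E2) are now exactly ``$[0,a]$ is an MV-algebra for each idempotent $a$'' and ``every element lies below an idempotent'', both established above; this yields at once that $(E;\vee,\wedge,\oplus,0)$ is an EMV-algebra with the given lattice operations. For the last assertion I would compare the partial sum of Proposition \ref{pr:state1} with $+$: in a Boolean $a\ge x,y$ one has $x\odot y=0$ iff $x\le\lambda_a(y)=a-y$ iff $x+y$ is defined in $[0,a]$ (equivalently in $E$), and in that case $y\wedge(a-x)=y$, so $x\oplus y=x+y$. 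The main obstacle is the compatibility claim: everything downstream is routine once $\oplus$ is known to be independent of the Boolean upper bound, but that independence is precisely what keeps the definition from being circular and must be extracted from the interplay of RDP, the lattice structure, and the idempotency of Boolean elements inside the representing $\ell$-groups.
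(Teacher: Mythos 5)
Your argument is correct and follows the same skeleton as the paper's proof (define $\oplus$ locally from Boolean upper bounds, prove independence of the bound, identify the idempotents of $\oplus$ with the Boolean elements, and match the two partial additions), but the key compatibility lemma is established by a genuinely different device. The paper stays inside the GEA and invokes two facts about Boolean elements of generalized effect algebras --- $a\le b$ is Boolean iff $a\wedge(b-a)=0$, and $(x+y)\wedge a=(x\wedge a)+(y\wedge a)$ for Boolean $a$, citing \cite[Thm 3.2, Prop 2.5]{Dvu2} --- to obtain $a-x=(c-x)\wedge a$ and hence $x\oplus_c y=x\oplus_a y$ by a direct computation with meets. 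You instead pass to Mundici's representation $[0,c]\cong\Gamma(G,c)$ of the local MV-algebra and compute $x\oplus_c y=(x+y)\wedge c\le(d+d)\wedge c=d$ in the $\ell$-group. Both work; your route avoids the external citations and, as a bonus, yields a clean proof of the converse half of the Boolean characterization ($a\oplus a=a$ forces $a\oplus_b a=a$ for \emph{every} $b\ge a$, not merely for Boolean $b$, via $a\wedge(b-a)\le a\wedge(c-a)=0$), a point the paper's proof passes over rather quickly; your explicit appeal to Proposition \ref{pr:000} for the final verification is also tidier than the paper's implicit conclusion. Two places deserve one more sentence each: (i) the identity $x\oplus_d y=(x+y)\wedge d$ presupposes that the GEA sum and difference restricted to $[0,c]$ coincide with the restricted group operations of $\Gamma(G,c)$ and that the lattice operations agree there --- true, and standard for MV-effect algebras, but it is exactly what licenses the transfer and should be stated; (ii) in the last step the parenthetical ``$x+y$ defined in $[0,a]$, equivalently in $E$'' needs the observation that if $x+y$ exists in $E$ and $x,y\le a$ with $a$ Boolean, then automatically $x+y\le a$ (again by $(x+y)\wedge c\le(a+a)\wedge c=a$ in a common $\Gamma(G,c)$); the paper sidesteps this by choosing its Boolean bound above $x+y$ itself rather than above $x,y$. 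Neither point is a gap in substance.
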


\begin{proof}
We note that if $a \in E$, then in the effect algebra $E_a=([0,a];+,0,1)$ we have for $x,y \in [0,a]$, $x\le_E y $ iff $x\le_{E_a} y$. Due to \cite[Thm 3.2]{Dvu2}, an element $a \in E$ with $a\le b \in E$ is Boolean iff $a\wedge (b-a)=0$. In addition, if $a$ is a Boolean element of $E$ and $a\le b \in E$, then for each $x,y \in [0,b]$ with $x+y \in [0,b]$, due to \cite[Prop 2.5]{Dvu2}, we have $(x+y)\wedge a= (x\wedge a)+(y\wedge a)$.

If a GEA $E$ has a top element $1$, it is a Boolean element, and we set $x\oplus y = x +(y\wedge x')$, $x,y \in E$, so that $(E;\oplus,',0,1)$ is an MV-algebra, and $(E;\vee,\wedge,\oplus,0)$ is an EMV-algebra.

Now let $E$ have no top element. The binary operation $\oplus$ is defined as follows. Given $x,y\in E$, there is a Boolean element $a\in E$ such that $x,y \le a$. If there is another Boolean element $b\in E$ with $x,y \le b$, there is a third Boolean element $c\in E$ such that $x,y\le a,b \le c$. Then we have $a\oplus_a a= a= a\oplus_c a$, and $x\oplus_a y= x+(y\wedge (a-x))$ and $x\oplus_c y= x+(y\wedge (c-x))$. In addition, $c = (c-x)+ x$ so that $a=c\wedge a= ((c-x)+x)\wedge a= ((c-x)\wedge a)+(x\wedge a)$ which yields
$a-x= (c-x)\wedge a$. Hence,
\begin{eqnarray*}
x\oplus_c y&=& x+(y\wedge (c-x))\\%= x + (y\wedge ((c-a)+(a-x)))\\
&=& x+((y\wedge a)\wedge (c-x)\wedge a)\\
&=& x+(y\wedge (a-x))\\
&=& x\oplus_a y.
\end{eqnarray*}
In the same way we have $x\oplus_b y=x\oplus_c y$, which shows that we can define unambiguously $\oplus$ by $x\oplus y=x\oplus_a y$ whenever $a$ is a Boolean element of $E$ such that $x,y \le a$.

It is clear that if $a$ is a Boolean element of $E$, then $a\oplus a=a=a\oplus_a a$. Conversely, let $a\oplus a = a$, then for each $b\ge a$, we have $a=a\oplus_b a$, so that $a$ is a Boolean element of $E$.

Consequently, $(E;\vee,\wedge,\oplus,0)$ is an EMV-algebra, and if $x,y\in E$ and $x,y\le a$, where $a$ is a Boolean element, then $x\odot y = 0$ iff $x\le \lambda_a(y)=a-y$, so that $x+y\le a$ and $x+y$ is defined in $E$ as well as in $[0,a]$. Conversely, let $x+y$ be defined in $E$ and let $a$ be a Boolean element such that $x+y\le a$. Then $x\le a-y=\lambda_a(y)$, so that $x\odot y =0$.
\end{proof}

\section{States on EMV-algebras}%4

In the present section, we introduce states. A stronger notion of states, state-morphisms, were introduced in \cite{DvZa} and used in \cite{DvZa1} for establishing the Loomis--Sikorski theorem for $\sigma$-complete EMV-algebras. A state, an analogue of a finitely additive measure, is defined here for EMV-algebras even if they have not necessarily a top element. If an EMV-algebra possesses a top element, the state is the same as that for MV-algebras. We show that state-morphisms are only extremal states. We establish that all state-morphisms generate in some sense all states, the Krein--Mil'man-type representation. In addition, some topological properties of the state space are investigated.

We note that according to \cite{DvZa}, a mapping $s: M\to [0,1]$ such that $s$ is an EMV-homomorphism from $M$ into the MV-algebra of the real interval $[0,1]$ is said to be a {\it state-morphism} if there is an element $x \in M$ with $s(x)=1$. We denote by $\mathcal{SM}(M)$ the set of all state-morphisms on $M$. If $M\ne \{0\}$, $M$ possesses at least one state-morphism, see \cite[Thm 4.2]{DvZa}. The basic properties of state-morphisms were established in \cite[Prop 4.1]{DvZa}:

\begin{proposition}\label{pr:state2}
Let $s$ be a state-morphism on an EMV-algebra $M$. Then

\begin{itemize}
\item[{\rm (i)}] $s(0)=0$;
\item[{\rm (ii)}] $s(a)\in \{0,1\}$ for each idempotent $a \in M$;
\item[{\rm (iii)}] if $x\le y$, then $s(x)\le s(y)$;
\item[{\rm (iv)}] $s(\lambda_a(x))=   s(a)-s(x)$ for each $x\in [0,a]$, $a \in \mathcal I(M)$;
\item[{\rm (v)}] $\Ker(s)$ is a proper ideal of $M$,  where $\Ker(s)=\{x\in M\colon s(x)=0\}$.
\end{itemize}
\end{proposition}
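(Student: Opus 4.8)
The plan is to derive each item directly from the defining properties of an EMV-homomorphism into the standard MV-algebra $[0,1]$, together with the elementary arithmetic of $[0,1]$. Item (i) is immediate, since an EMV-homomorphism preserves $0$ by definition, whence $s(0)=0$.

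For (ii) I would use that $s$ preserves $\oplus$: if $a$ is an idempotent, then $a\oplus a=a$ forces $s(a)\oplus s(a)=s(a)$ in $[0,1]$. Since the only solutions of $t\oplus t=t$, i.e. of $\min(2t,1)=t$, in $[0,1]$ are $t=0$ and $t=1$, we obtain $s(a)\in\{0,1\}$. For (iii) I would recall that in any EMV-algebra $x\le y$ iff $x\vee y=y$; applying the $\vee$-preservation of $s$ gives $s(x)\vee s(y)=s(y)$, that is $s(x)\le s(y)$.

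Item (iv) is the only place requiring a little care. By the definition of an EMV-homomorphism, $s(\lambda_a(x))=\lambda_{s(a)}(s(x))$ for $x\in[0,a]$ and $a$ idempotent. Using (ii), $s(a)\in\{0,1\}$, and I would split into two cases. If $s(a)=1$, then $\lambda_1(s(x))=1-s(x)=s(a)-s(x)$, as required. If $s(a)=0$, then from $x\le a$ and (iii) we get $s(x)=0$; the interval $[0,s(a)]=\{0\}$ collapses to a point and $\lambda_0(0)=0=s(a)-s(x)$, so the formula $s(\lambda_a(x))=s(a)-s(x)$ holds uniformly.

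Finally, for (v) I would verify the ideal axioms for $\Ker(s)$. It is non-void because $0\in\Ker(s)$ by (i); it is a down-set by (iii), since $x\le y$ with $s(y)=0$ yields $s(x)=0$; and it is closed under $\oplus$, because $s(x\oplus y)=s(x)\oplus s(y)=0$ whenever $s(x)=s(y)=0$. Properness follows from the normalization hypothesis built into the notion of a state-morphism: there exists $x\in M$ with $s(x)=1$, so $x\notin\Ker(s)$ and hence $\Ker(s)\ne M$. I do not anticipate a genuine obstacle here; everything is a direct consequence of the homomorphism axioms, and the only subtlety is the degenerate case $s(a)=0$ in (iv), which the monotonicity (iii) disposes of cleanly.
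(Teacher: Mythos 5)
Your proof is correct, and it is the expected direct verification from the homomorphism axioms; note that the paper itself does not prove this proposition but merely quotes it from \cite[Prop 4.1]{DvZa}, so there is no in-paper argument to compare against. The one point deserving care, the degenerate case $s(a)=0$ in (iv), you handle properly via monotonicity (or, equivalently, by observing $\lambda_a(x)\le a$ forces $s(\lambda_a(x))=0$), so nothing is missing.
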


We recall that a partial operation $+$, that is commutative and associative, on an EMV-algebra $M$ was defined in Proposition \ref{pr:state1}.
We say that a mapping $s:M \to [0,1]$ is a {\it state} on $M$ if (i) $s(x+ y)=s(x)+s(y)$ whenever $x+y$ is defined in $M$, and (ii) there is an element $a\in M$ such that $s(a)=1$.

A state is an analogue of a finitely additive probability measure, and states for MV-algebras were defined in \cite{Mun2} as averaging the truth-value in \L ukasiewicz logic.

We denote by $\mathcal S(M)$ the set of states on $M$. The basic properties of states are as follows:

\begin{proposition}\label{pr:state3}
Let $s$ be a state on an EMV-algebra $M$. For all $x,y \in M$, we have
\begin{enumerate}
\item[{\rm (i)}] $s(0)=0$;
\item[{\rm (ii)}] if $x\le y\le a \in \mathcal I(M)$, then $s(x)\le s(y)$ and $s(y\odot \lambda_a(x))=s(y)-s(x)$;
\item[{\rm (iii)}] $s(x\vee y)+s(x\wedge y)=s(x)+s(y)$;
\item[{\rm (iv)}] $s(x\oplus y) + s(x\odot y)=s(x)+s(y)$;
\item[{\rm (v)}]  $\Ker(s)=\{x\in M \colon s(x)=0\}$ is an ideal of EMV-algebra $M$ as well as an GEA-ideal of the GEA $(M;+,0)$;
\item[{\rm (vi)}] if $s_1,s_2 \in \mathcal S(M)$ and $\lambda \in [0,1]$ is a real number, then the convex combination $s= \lambda s_1 +(1-\lambda)s_2$ of states $s_1,s_2$ is a state on $M$.
\item[{\rm (vii)}] If we define a mapping $\hat s$ on the quotient EMV-algebra $I/\Ker(s)$ by $\hat s(x/\Ker(s)):= s(x)$, $(x\in M)$, then $\hat s$ is a state on $M/\Ker(s)$, and $M/\Ker(s)$ has a top element.
\end{enumerate}
\end{proposition}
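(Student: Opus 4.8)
The plan is to reduce every identity to a computation inside a single MV-block. By axiom (iv) of an EMV-algebra, any finite family of elements lies below a common idempotent $a$, and on $[0,a]$ the operations $\oplus,\odot,\vee,\wedge,\lambda_a$ are the genuine MV-operations; moreover, by Proposition \ref{pr:state1}, the partial sum $+$ restricted to $[0,a]$ is exactly the orthogonal MV-sum, so $u+v=u\oplus v$ whenever $u\odot v=0$. Part (i) is immediate from $0+0=0$, giving $s(0)=2s(0)$. For (ii), when $x\le y\le a$ I would use the orthogonal decomposition $y=x+(y\odot\lambda_a(x))$, valid because $x\odot(y\odot\lambda_a(x))\le x\odot\lambda_a(x)=0$ and $x\oplus(y\odot\lambda_a(x))=x\vee y=y$; additivity then gives $s(y)=s(x)+s(y\odot\lambda_a(x))$, and since $s\ge 0$ this yields both monotonicity and the difference formula.

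For (iv) the key is two orthogonal decompositions over $x\odot y$. Working in $[0,a]$ one checks $y=(x\odot y)+(y\ominus(x\odot y))$ and $x\oplus y=x+(y\ominus(x\odot y))$, where $y\ominus(x\odot y)=y\odot\lambda_a(x\odot y)$, the latter orthogonal since $y\ominus(x\odot y)=(x\oplus y)\odot\lambda_a(x)\le\lambda_a(x)$. Applying $s$ to both and subtracting gives $s(x\oplus y)=s(x)+s(y)-s(x\odot y)$, which is (iv). Then (iii) follows at once from (iv) together with the MV-identities $x\oplus y=(x\vee y)\oplus(x\wedge y)$ and $x\odot y=(x\vee y)\odot(x\wedge y)$, which hold in the block $[0,a]$: applying (iv) to the pair $x\vee y,\,x\wedge y$ turns the left-hand sides into $s(x\oplus y)+s(x\odot y)=s(x)+s(y)$.

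For (v), downward closedness of $\Ker(s)$ is (ii), and if $s(x)=s(y)=0$ then (iv) gives $s(x\oplus y)=-s(x\odot y)\le 0$, hence $s(x\oplus y)=0$; the GEA-ideal condition is even easier, since on an orthogonal pair $s(x+y)=s(x)+s(y)$. Part (vi) is routine for additivity and for the range $[0,1]$; the only real point is normalization: if $s_i(a_i)=1$, I would take $a:=a_1\vee a_2$, so that monotonicity (ii) forces $s_1(a)=s_2(a)=1$ and hence $s(a)=\lambda+(1-\lambda)=1$.

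The substantive part is (vii). Well-definedness of $\hat s$ reduces to showing that $x\equiv y\ (\mathrm{mod}\ \Ker(s))$, i.e. $x\odot\lambda_a(y),\,y\odot\lambda_a(x)\in\Ker(s)$, implies $s(x)=s(y)$; using the decomposition $x=(x\wedge y)+(x\odot\lambda_a(y))$ coming from (ii), one gets $s(x)=s(x\wedge y)=s(y)$. That $\hat s$ is additive follows because $\bar x\odot\bar y=\bar 0$ means $s(x\odot y)=0$, whence (iv) gives $s(x\oplus y)=s(x)+s(y)$; normalization is inherited from the witness $a$. The delicate claim is that $M/\Ker(s)$ has a top element, and I expect this to be the main obstacle. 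The natural candidate is $\bar a$ for a normalizing $a$ with $s(a)=1$: given arbitrary $x$, choose an idempotent $b\ge x,a$; then $s(b)\ge s(a)=1$ forces $s(b)=1$, so $s(\lambda_b(a))=s(b)-s(a)=0$ by (ii), and since $x\odot\lambda_b(a)\le\lambda_b(a)$ monotonicity gives $s(x\odot\lambda_b(a))=0$, i.e. $\bar x\le\bar a$. Hence $\bar a$ dominates every class and is the top. The conceptual heart — and the step to get right — is precisely that the normalizing element, which need not be a top of $M$, becomes a top once $\Ker(s)$ is collapsed.
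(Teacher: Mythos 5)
Your proof is correct and follows the same overall strategy as the paper's: reduce everything to a single MV-block $[0,a]$ and exploit orthogonal decompositions together with additivity of $s$. There are two local differences worth noting. First, the paper proves (iii) directly from the single MV-identity $(x\vee y)\odot \lambda_a(y)=x\odot\lambda_a(x\wedge y)$ together with (ii), and proves (iv) from the decomposition $x=((x\oplus y)\odot\lambda_a(y))\oplus(x\odot y)$; you instead prove (iv) first via the pair of decompositions $y=(x\odot y)+(y\odot\lambda_a(x\odot y))$ and $x\oplus y=x+(y\odot\lambda_a(x\odot y))$, and then deduce (iii) by applying (iv) to both pairs $(x,y)$ and $(x\vee y,x\wedge y)$ using $x\oplus y=(x\vee y)\oplus(x\wedge y)$ and $x\odot y=(x\vee y)\odot(x\wedge y)$. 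Both routes are legitimate MV-computations; yours makes (iii) a formal consequence of (iv), while the paper's keeps the two items independent. Second, in (vii) the paper dispatches the claim that $[a]$ is the top element of $M/\Ker(s)$ in one terse sentence, whereas you supply the actual argument (pass to an idempotent $b\ge x,a$ with $s(b)=1$, note $s(\lambda_b(a))=0$, hence $x\odot\lambda_b(a)\in\Ker(s)$ and $[x]\le[a]$); this is a genuine improvement in completeness over the published proof, and it is exactly the right argument.
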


\begin{proof}
(i) Since $0=0+0$, we have $s(0)=0$.

(ii) If $x\le y$, then there is an element $z \in M$ such that $x+z=y$, so that $s(z)=s(y)-s(x)\ge 0$. In addition, since $x+(y\odot \lambda_a(x))$ exists in $M$ and $x+(y\odot \lambda_a(x))=x\oplus (y\odot \lambda_a(x))=y$, we conclude the result.

(iii) Given $x,y \in M$, there is an idempotent $a\in M$ with $x,y \le a$. Then in the MV-algebra $[0,a]$, we have $(x\vee y)\odot \lambda_a(x)=x\odot \lambda_a(x\wedge y)$ which by (ii) gives (iii).

(iv) If $x,y \le a \in \mathcal I(M)$, using \cite[Prop 1.25]{GeIo}, we have $x= ((x\oplus y)\odot \lambda_a(y))\oplus (x\odot y)$. Hence, (ii) implies the result.

(v) From (ii), we have that $\Ker(s)$ is a down-set. From the identity  $x\oplus y = x +((x\oplus y)\odot \lambda_a(x))= x +(y\wedge \lambda_a(x))$, we get $\Ker(s)$ is closed under $\oplus$.

(vi) It is clear.

(vii) Let $x,y \in M$. Then $x/\Ker(s)=y/\Ker(s)$ iff $s(x)=s(x\wedge y)=s(y)$. Therefore, $\hat s$ is correctly defined. For each $x\in M$, let $[x]=x/\Ker(s)$. Given $x,y \in M$, there is an idempotent $a\in \mathcal I(M)$ such that $x,y \le a$ and $s(a)=1$. Hence, $\lambda_a(x)/\Ker(s) = \lambda_{[a]}([x])$. Assume that $[x]\le [\lambda_a(y)]$. For $x_0= x\wedge \lambda_a(y)$, we have $x_0 \le \lambda_a(y)$ and $[x_0]=[x\wedge \lambda_a(x)]=[x]\wedge [\lambda_a(y)]=[x]\le [\lambda_a(y)]=\lambda_{[a]}([y])$. Then
\begin{eqnarray*}
\hat s([x]+[y])&=& \hat s([x\oplus y])= \hat s([x_0+y])= s(x_0+y)= s(x_0)+s(y)\\
&=& \hat s([x_0])+\hat s([y])= \hat s([x])+\hat s([y]).
\end{eqnarray*}
In addition, $\hat s([a])=s(a)=1$, so that $\hat s$ is a state on $M/\Ker(s)$. Since there is an element $a\in M$ such that $s(a)=1$, and $s(x)\le 1$, we have that the element $[a]$ is the top element for $M/\Ker(s)$.
\end{proof}

We say that a state $s$ is {\it extremal} if from  $s= \lambda s_1 +(1-\lambda)s_2$, for $\lambda \in (0,1)$ and $s_1,s_2 \in \mathcal S(M)$, we have $s=s_1=s_2$. We denote by $\partial \mathcal S(M)$ the set of extremal states on $M$. In what follows, we show that $\partial \mathcal S(M) = \mathcal{SM}(M)$.

\begin{proposition}\label{pr:state4}
Let $M$ be an EMV-algebra. Every state-morphism on $M$ is a state.
Let  $s$ be a state on $M$. The following statements are equivalent:
\begin{enumerate}
\item[{\rm (i)}] $s$ is a state-morphism.
\item[{\rm (ii)}] $s(x\wedge y)=\min\{s(x),s(y)\}$, $x,y \in M$.
\item[{\rm (iii)}] $s(x\vee y)= \max\{s(x),s(y)\}$, $x,y \in M$.
\item[{\rm (iv)}] $s(x\oplus y)= \min\{s(x)+s(y),1\}$, $x,y \in M$.
\end{enumerate}
\end{proposition}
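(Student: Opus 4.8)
The plan is to handle the preliminary sentence separately and then close the four equivalences in the single cycle $\mathrm{(i)}\Rightarrow\mathrm{(iv)}\Rightarrow\mathrm{(iii)}\Rightarrow\mathrm{(ii)}\Rightarrow\mathrm{(i)}$. Two tools will be used repeatedly. The first is the pair of additivity identities of Proposition~\ref{pr:state3}(iii)--(iv), namely $s(x\vee y)+s(x\wedge y)=s(x)+s(y)$ and $s(x\oplus y)+s(x\odot y)=s(x)+s(y)$. The second is a \emph{normalization device}: given $x,y$, I would pick any idempotent bound and then enlarge it, using that $a_0\oplus a_1\in\mathcal I(M)$ and monotonicity, to an idempotent $a\ge x,y$ with $s(a)=1$ (such an $a$ exists, since some element and hence some idempotent above it has state $1$). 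On such an $a$ the equality $s(\lambda_a(z))=s(a)-s(z)=1-s(z)$ is immediate from additivity, because $z+\lambda_a(z)=a$.

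For the preliminary claim, a state-morphism $s$ is an EMV-homomorphism into the MV-algebra $[0,1]$, so on a normalizing interval $[0,a]$ it restricts to an MV-homomorphism. When $x+y$ is defined, i.e. $x\odot y=0$, preservation of $\odot$ gives $\max\{s(x)+s(y)-1,0\}=s(x\odot y)=s(0)=0$, whence $s(x)+s(y)\le1$ and therefore $s(x+y)=s(x\oplus y)=\min\{s(x)+s(y),1\}=s(x)+s(y)$; together with the normalizing element this exhibits $s$ as a state. The arc $\mathrm{(i)}\Rightarrow\mathrm{(iv)}$ is then immediate, since a homomorphism sends $x\oplus y$ to $s(x)\oplus s(y)=\min\{s(x)+s(y),1\}$, and $\mathrm{(iii)}\Rightarrow\mathrm{(ii)}$ is one line from Proposition~\ref{pr:state3}(iii) and $\min\{p,q\}+\max\{p,q\}=p+q$.

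The arc $\mathrm{(iv)}\Rightarrow\mathrm{(iii)}$ is where the $\odot$ bookkeeping pays off. From (iv) and Proposition~\ref{pr:state3}(iv) one gets $s(x\odot y)=\max\{s(x)+s(y)-1,0\}$. Fixing a normalizing $a\ge x,y$ and using the MV identity $x\vee y=(x\odot\lambda_a(y))\oplus y$ inside $[0,a]$ (the two summands being $+$-orthogonal, since $(x\odot\lambda_a(y))\odot y\le\lambda_a(y)\odot y=0$), additivity yields $s(x\vee y)=s(x\odot\lambda_a(y))+s(y)=\max\{s(x)-s(y),0\}+s(y)=\max\{s(x),s(y)\}$, which is (iii).

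The main obstacle is the closing arc $\mathrm{(ii)}\Rightarrow\mathrm{(i)}$: upgrading a $\wedge$-preserving state to a genuine EMV-homomorphism. First I would show $s$ is $\{0,1\}$-valued on $\mathcal I(M)$: for an idempotent $a\le b$ with $s(b)=1$, the element $a$ is Boolean in $[0,b]$, so $a\wedge\lambda_b(a)=0$ and $a+\lambda_b(a)=b$; then $\min\{s(a),s(\lambda_b(a))\}=s(0)=0$ while $s(a)+s(\lambda_b(a))=1$, forcing $s(a)\in\{0,1\}$, and the general case follows by enlarging $b$. Preservation of each $\lambda_b$ then splits by the value of $s(b)$: the case $s(b)=1$ is additivity, the case $s(b)=0$ is trivial. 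The crux is preservation of $\oplus$: using the identity $x\oplus y=x+(y\wedge\lambda_a(x))$ recorded in the proof of Proposition~\ref{pr:state3}(v) (a legitimate partial sum, as $y\wedge\lambda_a(x)\le\lambda_a(x)$) and hypothesis (ii), I would compute on a normalizing $a$
\[
s(x\oplus y)=s(x)+s(y\wedge\lambda_a(x))=s(x)+\min\{s(y),1-s(x)\}=\min\{s(x)+s(y),1\}.
\]
Hence $s$ preserves $\oplus$, $\vee$, $\wedge$, $0$ and every $\lambda_b$, so it is an EMV-homomorphism attaining the value $1$, i.e. a state-morphism, closing the cycle. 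I expect the only delicate point to be the consistent choice of normalizing idempotents and the verification that the auxiliary partial sums are defined; everything else reduces to MV-algebra identities computed inside the intervals $[0,a]$.
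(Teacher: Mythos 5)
Your proof is correct, and it routes the equivalences differently from the paper. The paper proves (i)$\Rightarrow$(ii), (ii)$\Leftrightarrow$(iii), (ii)$\Rightarrow$(iv), and closes the cycle with (iv)$\Rightarrow$(i): there the key move is to derive $s(x\odot y)=s(x)\odot s(y)$ from (iv) together with Proposition~\ref{pr:state3}(iv), and then read off preservation of $\wedge$ and $\vee$ from the MV identities $x\wedge y=x\odot(\lambda_a(x)\oplus y)$, so the homomorphism property is recovered from $\odot$-preservation. You instead close the cycle at (ii)$\Rightarrow$(i), which forces you to do two extra things the paper gets for free: (a) show $s$ is $\{0,1\}$-valued on idempotents (via Booleanness of $a$ in $[0,b]$, $a\wedge\lambda_b(a)=0$, and additivity on $a+\lambda_b(a)=b$) so that $\lambda_b$-preservation can be split into the cases $s(b)=1$ and $s(b)=0$, and (b) re-derive $\oplus$-preservation from $x\oplus y=x+(y\wedge\lambda_a(x))$ --- which is verbatim the paper's (ii)$\Rightarrow$(iv) computation, just relocated. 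Your arc (iv)$\Rightarrow$(iii) via $x\vee y=(x\odot\lambda_a(y))\oplus y$ and $s(x\odot\lambda_a(y))=\max\{s(x)-s(y),0\}$ has no counterpart in the paper but is sound; the orthogonality check $(x\odot\lambda_a(y))\odot y\le\lambda_a(y)\odot y=0$ is exactly what is needed for the partial sum to exist. The normalization device (enlarging an idempotent bound by $\oplus$-ing it with an idempotent above an element of state $1$) is the same implicit step the paper uses throughout. Net effect: the paper's route is slightly shorter because $\odot$-preservation packages the homomorphism property in one stroke; yours makes the ``state with (ii) is already a lattice- and $\lambda_b$-preserving map'' content more explicit, at the cost of the extra idempotent lemma. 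One small presentational point: in the final arc you invoke preservation of $\vee$, which is not literally among your proved arcs (you proved (iii)$\Rightarrow$(ii), not the converse); it does follow in one line from (ii) and Proposition~\ref{pr:state3}(iii), but you should say so.
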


\begin{proof}
Let $s$ be a state-morphism on $M$. For $x,y \in M$, we can find an idempotent $a\in M$ such that $x,y\le a$ and $s(a)=1$. Since $s$ is an EMV-homomorphism, we have $s(x\odot y)=s(x)\odot s(y)$. Therefore, if $x\odot y=0$, then $s(x)\odot s(y)=\max\{s(x)+s(y)-1,0\}=0$ which yields $s(x+y)=s(x\oplus y)=s(x)\oplus s(y)=\min\{s(x)+s(y),1\}=s(x)+s(y)$ which shows $s$ is a state on $M$.

Now let $s$ be an arbitrary state on $M$. Similarly, for all $x,y \in M$ there is $a\in M$ such that $s(a)=1$.
(i) $\Rightarrow$ (ii). Since $s$ is an EMV-homomorphism, (ii) holds.

(ii) $\Leftrightarrow$ (iii). It follows from the equalities $\lambda_a(x\vee y)=\lambda_a(x)\wedge \lambda_a(y)$ and $\lambda_a(x\wedge y)=\lambda_a(x)\vee \lambda_a(y)$.

(ii) $\Rightarrow$ (iv).
We have $x\oplus y = x +((x\oplus y)\odot \lambda_a(x))= x+ (\lambda_a(x)\wedge y)$. Then $s(x \oplus y) = s(x) + s(\lambda_a(x)\wedge y) = s(x) + \min\{1 - s(x),s(y)\} = \min\{s(x) + s(y), 1\}= s(x)\oplus s(y)$.

(iv) $\Rightarrow$ (i). First we show that $s(x\odot y)=s(x)\odot s(y)$. Indeed, $s(x\odot y)=s(\lambda_a(\lambda_a(x)\oplus \lambda_a(y)))= s(a)- s(\lambda_a(x)\oplus \lambda_a(y)) = 1- s(\lambda_a(x))\oplus s(\lambda_a(y))= s(x)\odot s(y)$. Therefore, $s(x\wedge y)=s(x\odot (\lambda_a(x)\oplus y))= s(x)\odot ((1-s(x))\oplus s(y)) =\min\{s(x),s(y)\}$. Similarly, $s(x\vee y)=\max\{s(x),s(y)\}$. Hence, $s$ preserves $\oplus,\vee,\wedge, \odot$. In addition, $s(\lambda_a(x))= 1 - s(x)= \lambda_{s(a)}(s(x))$, i.e. $s$ is an EMV-homomorphism.
\end{proof}

Since every state-morphism is a state on $M$, if $M\ne\{0\}$, $M$ possesses at least one state because it has at least one state-morphism, see  \cite[Thm 4.2, Thm 5.6]{DvZa}.

\begin{proposition}\label{pr:state5}
A state $s$ on an EMV-algebra $M$ is a state-morphism if and only if $\Ker(s)$ is a maximal ideal of $M$.
\end{proposition}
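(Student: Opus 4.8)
The plan is to reduce everything to the quotient $M/\Ker(s)$, which by Proposition \ref{pr:state3}(vii) is an MV-algebra, since it has a top element, and on which the induced map $\hat s$, given by $\hat s(x/\Ker(s))=s(x)$, is a \emph{faithful} state: indeed $\hat s(x/\Ker(s))=0$ iff $s(x)=0$ iff $x\in\Ker(s)$, so $\Ker(\hat s)=\{0/\Ker(s)\}$. Writing $\pi\colon M\to M/\Ker(s)$ for the natural EMV-homomorphism, we have $s=\hat s\circ\pi$ with $\pi$ surjective, so $s$ realizes a given operation as $\min$, $\max$, or truncated sum exactly when $\hat s$ does; by Proposition \ref{pr:state4} this shows $s$ is a state-morphism on $M$ iff $\hat s$ is a state-morphism on $M/\Ker(s)$. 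Moreover, using the ideal/quotient correspondence for EMV-algebras from \cite{DvZa, DvZa1}, the ideals of $M$ containing $\Ker(s)$ are in order-preserving bijection with the ideals of $M/\Ker(s)$, so $\Ker(s)$ is maximal in $M$ iff the zero ideal is maximal in $M/\Ker(s)$, i.e. iff the MV-algebra $M/\Ker(s)$ is simple. Thus the claim becomes the purely MV-algebraic assertion: a faithful state $\hat s$ on an MV-algebra $A$ is a state-morphism iff $A$ is simple.

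For the direction \emph{state-morphism $\Rightarrow$ maximal}, I would argue that if $\hat s$ is a state-morphism then, by Proposition \ref{pr:state4}, it preserves $\wedge$ as $\min$ and $\oplus$ as truncated sum, so it is an MV-homomorphism of $A$ into the real interval $[0,1]$; faithfulness makes it injective, hence an embedding. Since every MV-subalgebra of $[0,1]$ is simple, $A$ is simple, which is the required conclusion.

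For the converse \emph{maximal $\Rightarrow$ state-morphism}, suppose $A=M/\Ker(s)$ is simple. Then $A$ is an Archimedean linearly ordered MV-algebra, and by Mundici's functor (\cite{CDM}) it is isomorphic to $\Gamma(G,u)$ for a subgroup $G\le\mathbb R$ with $u\mapsto 1$, the isomorphism being the inclusion into $[0,1]$. The key fact I would invoke is that such an algebra carries exactly one state, namely this inclusion: any normalized, positive, additive functional on a unital subgroup of $\mathbb R$ must coincide with the embedding, by the Archimedean property, since agreement on the unit together with monotonicity forces agreement on every element. Hence the faithful state $\hat s$ equals the inclusion, which is a state-morphism, and therefore $s=\hat s\circ\pi$ is a state-morphism on $M$.

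The main obstacle is the uniqueness-of-state step in the converse: one must show that on a simple MV-algebra, additivity together with monotonicity pins down the state uniquely as the Archimedean embedding into $[0,1]$. Everything else is bookkeeping with Proposition \ref{pr:state3}(vii), Proposition \ref{pr:state4}, and the ideal/quotient correspondence; the genuinely analytic content sits entirely in this Archimedean rigidity argument.
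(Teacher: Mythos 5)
Your proof is correct, but it takes a genuinely different route from the paper's. The paper handles the forward direction by citation and proves the converse entirely inside $M$: for $x,y\le a$ with $s(a)=1$ it uses the MV-identity $(x\odot\lambda_a(y))\wedge(y\odot\lambda_a(x))=0$ together with the fact that a maximal ideal is prime to conclude that $s(x\odot\lambda_a(y))=0$ or $s(y\odot\lambda_a(x))=0$, whence $s(x\wedge y)=\min\{s(x),s(y)\}$ by (\ref{eq:x<y}) and Proposition \ref{pr:state3}(ii), and then invokes Proposition \ref{pr:state4}(ii). You instead push everything to the quotient $A=M/\Ker(s)$, observe via Proposition \ref{pr:state3}(vii) that $\hat s$ is a faithful state on an MV-algebra, translate maximality of $\Ker(s)$ into simplicity of $A$ through the ideal/quotient correspondence, and then appeal to two classical MV facts: every MV-subalgebra of $[0,1]$ is simple (your forward direction, which is an honest argument where the paper only cites \cite{DvZa}), and a simple MV-algebra is an Archimedean subalgebra of $[0,1]$ carrying a unique state, namely the inclusion (your converse). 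Your route is conceptually transparent --- it isolates the result as Archimedean rigidity of states on simple MV-algebras --- but it leans on heavier imported machinery: the congruence/ideal correspondence for EMV-quotients, Chang's classification of simple MV-algebras, and the uniqueness-of-states theorem for unital subgroups of $\mathbb R$. The paper's argument is more elementary and local, needing only primeness of maximal ideals and one lattice identity. One small caution: your phrase that ``agreement on the unit together with monotonicity forces agreement on every element'' understates what is needed; the rigidity argument uses additivity plus the Archimedean comparison $m\cdot u\le n g\Rightarrow m\le n\,\tilde s(g)$ to squeeze $\tilde s(g)$ between rational approximants of $g$, not monotonicity alone. Since you correctly flag this as the analytic core and the fact is standard, this is a presentational rather than a logical gap.
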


\begin{proof}
If $s$ is a state-morphism, by \cite[Thm 4.2(ii)]{DvZa}, $\Ker(s)$ is a maximal ideal of $M$.  Conversely, let $\Ker(s)$ be a maximal ideal of $M$. Take $x,y \in M$; there is an idempotent $a\in M$ such that $x,y \le a$ and $s(a)=1$. Then in the MV-algebra $[0,a]$, we have $(x\odot \lambda_a(y))\wedge (y\odot \lambda_a(x))=0$. Every maximal ideal is prime, so that $s(x\odot \lambda_a(y))=0$ or $s(y\odot \lambda_a(x))=0$. In the first case we have $0=s(x\odot \lambda_a(y))= s(x\odot \lambda_a(x\wedge y))= s(x)-s(x\wedge y)$, where we have used a fact $x\odot \lambda_a(y)=x\odot \lambda_a(x\wedge y)$, see (\ref{eq:x<y}), and in the second case, we have $s(y)=s(x\wedge y)$, i.e. $s(x\wedge y) = \min\{s(x),s(y)\}$, which by Proposition \ref{pr:state4} means $s$ is a state-morphism.
\end{proof}

We note that in \cite[Thm 4.2]{DvZa}, the following important characterization of state-morphisms by maximal ideals was established.

\begin{theorem}\label{th:state6}
{\rm (1)} If $I$ is a maximal ideal of an EMV-algebra $M$, then there is a unique state-morphism $s$ on $M$ such that $\Ker(s)=I$.

{\rm(2)} If $s_1$ and $s_2$ are state-morphisms, then $s_1=s_2$ if and only if $\Ker(s_1)=\Ker(s_2)$.
\end{theorem}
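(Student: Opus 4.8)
The plan is to reduce everything to the classical theory of MV-algebras by passing to the quotient $M/I$ and showing that it is a \emph{simple} MV-algebra. For a simple MV-algebra the statement is essentially H\"older's theorem: such an algebra embeds into the standard MV-algebra $[0,1]$, and the embedding is unique.

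For existence in (1), I would first argue that $M/I$ possesses a top element. The key observation is that for every idempotent $a\in\mathcal I(M)$ the interval $[0,a]$ is an ideal of $M$: it is a down-set, and if $x,y\le a$ then $x\oplus y\le a\oplus a=a$ by monotonicity of $\oplus$ and idempotency of $a$. Now $I$ is a proper ideal, so by axiom (iv) not every idempotent can lie in $I$ (otherwise every element, being below some idempotent, would be in $I$, forcing $I=M$); fix an idempotent $a\notin I$, so that $a/I\neq 0$. By the correspondence theorem the ideals of $M/I$ correspond to the ideals of $M$ containing $I$, and maximality of $I$ forces $M/I$ to be simple, having only $\{0\}$ and $M/I$ as ideals. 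The image $[0,a/I]$ is then a nonzero ideal of $M/I$, hence equals $M/I$; thus $x/I\le a/I$ for all $x$, i.e. $a/I$ is a top element. Consequently $(M/I;\oplus,\lambda_{a/I},0,a/I)$ is a simple MV-algebra, and H\"older's theorem supplies a unique injective MV-homomorphism $\iota\colon M/I\to[0,1]$. Setting $s:=\iota\circ\pi$, where $\pi\colon M\to M/I$ is the canonical EMV-homomorphism, yields an EMV-homomorphism with $s(a)=\iota(a/I)=1$ and $\Ker(s)=\pi^{-1}(\{0\})=I$, so $s$ is the desired state-morphism.

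Uniqueness in (1), and the nontrivial direction of (2), follow from the same picture. Any state-morphism $s'$ with $\Ker(s')=I$ factors as $\iota'\circ\pi$ with $\iota'\colon M/I\to[0,1]$ an injective MV-homomorphism, and the uniqueness clause of H\"older's theorem gives $\iota'=\iota$, whence $s'=s$. For (2) the forward implication is trivial; for the converse I would first note \emph{directly} (without invoking Proposition~\ref{pr:state5}, so as to avoid circularity) that the kernel of any state-morphism is maximal: the image $s(M)$ is a subalgebra of the simple MV-algebra $[0,1]$, hence simple, and $s(M)\cong M/\Ker(s)$, so $\Ker(s)$ is maximal. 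Applying the uniqueness just established to the common maximal kernel then gives $s_1=s_2$.

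The main obstacle is the structural step showing that $M/I$ has a top element, i.e. that a simple EMV-algebra is automatically a simple MV-algebra; this is what makes the reduction to H\"older's theorem legitimate, and it hinges on the interplay between idempotents, the ideals $[0,a]$ they generate, and the maximality of $I$. Once this is in place the remainder is a routine transcription of standard MV-algebraic facts. An alternative route would use the Basic Representation Theorem~\ref{th:embed} to embed $M$ into an MV-algebra $N$ and to transport maximal ideals and their associated state-morphisms between $M$ and $N$, but verifying this correspondence appears no simpler than the direct quotient argument.
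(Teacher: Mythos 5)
Your argument is correct, but note that the paper itself contains no proof of Theorem~\ref{th:state6}: it is imported verbatim from \cite[Thm 4.2]{DvZa}, so there is no in-paper argument to compare against. Your route --- pass to $M/I$, show the simple quotient has a top element, and invoke the uniqueness of the embedding of a simple MV-algebra into $[0,1]$ --- is the standard one for results of this type. The crux you correctly isolate is that a simple EMV-algebra is automatically an MV-algebra, and your proof of it works: $[0,a]$ is an ideal for every idempotent $a$ (axiom (iii) already gives that $[0,a]$ is closed under $\oplus$, so you do not even need monotonicity), some idempotent survives in the quotient since $I$ is proper and axiom (iv) holds, and then $[0,a/I]$ is a nonzero ideal of the simple quotient, hence all of it. Two debts you implicitly incur are both discharged by \cite{DvZa} and used freely elsewhere in the paper: (a) the ideal--congruence correspondence for EMV-algebras, needed both to form $M/I$ with $\pi^{-1}(0)=I$ and to run the correspondence theorem identifying ideals of $M/I$ with ideals of $M$ containing $I$; and (b) in the uniqueness step, the factorization $s'=\iota'\circ\pi$ requires that $s'$ be constant on congruence classes mod $I$, which follows because $s'$ preserves the local symmetric differences $(x\odot\lambda_a(y))\oplus(y\odot\lambda_a(x))$ and $\Ker(s')=I$. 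Your direct verification that kernels of state-morphisms are maximal (via $s(M)\cong M/\Ker(s)$ being a unital subalgebra of $[0,1]$, hence simple) correctly avoids circularity with Proposition~\ref{pr:state5}, whose forward implication itself cites the theorem under discussion. I see no gaps.
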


\begin{proposition}\label{pr:state7}
A state $s$ on an EMV-algebra $M$ is extremal if and only if $s$ is a state-morphism.
\end{proposition}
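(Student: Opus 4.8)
The plan is to prove both implications by routing everything through the kernel of $s$ and the quotient $M/\Ker(s)$, exploiting that a state is a state-morphism exactly when its kernel is maximal (Proposition \ref{pr:state5}) and that state-morphisms are determined by their kernels (Theorem \ref{th:state6}).

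First I would dispose of the easy implication, that a state-morphism $s$ is extremal. Suppose $s=\lambda s_1+(1-\lambda)s_2$ with $\lambda\in(0,1)$ and $s_1,s_2\in\mathcal S(M)$. For $x\in\Ker(s)$ the identity $0=s(x)=\lambda s_1(x)+(1-\lambda)s_2(x)$ together with $s_1(x),s_2(x)\ge 0$ forces $s_1(x)=s_2(x)=0$, so $\Ker(s)\subseteq\Ker(s_1)\cap\Ker(s_2)$. Each $\Ker(s_i)$ is a proper ideal (Proposition \ref{pr:state3}(v), proper since $s_i(a_i)=1$ for some $a_i$), while $\Ker(s)$ is maximal by Proposition \ref{pr:state5}; hence $\Ker(s)=\Ker(s_1)=\Ker(s_2)$. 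Applying Proposition \ref{pr:state5} in the other direction, each $s_i$ is itself a state-morphism, and since $s,s_1,s_2$ are state-morphisms with a common kernel, Theorem \ref{th:state6}(2) gives $s=s_1=s_2$. Thus $s$ is extremal.

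For the converse I would argue by contraposition: assuming $s$ is a state that is not a state-morphism, I would exhibit a nontrivial convex decomposition. By Proposition \ref{pr:state5}, $\Ker(s)$ is not maximal. Consider $A:=M/\Ker(s)$; by Proposition \ref{pr:state3}(vii) it is an EMV-algebra with a top element, hence an MV-algebra, carrying the induced state $\hat s$ with $\hat s([x])=s(x)$, so that $s=\hat s\circ\pi$ for the quotient map $\pi\colon M\to A$. One checks $\Ker(\hat s)=\{[0]\}$, and this trivial ideal fails to be maximal (equivalently, $A$ is not simple) precisely because $\Ker(s)$ is not maximal. Moreover any decomposition $\hat s=\lambda t_1+(1-\lambda)t_2$ with states $t_i$ on $A$ pulls back to $s=\lambda(t_1\circ\pi)+(1-\lambda)(t_2\circ\pi)$, where each $t_i\circ\pi\in\mathcal S(M)$ and, $\pi$ being onto, $t_1\ne t_2$ yields distinct pullbacks. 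Hence it suffices to show that the faithful state $\hat s$ on the non-simple MV-algebra $A$ is not extremal. The clean sub-case is when $A$ has a nontrivial idempotent $b$: faithfulness forces $\hat s(b)\in(0,1)$, and since meeting with the Boolean element $b$ is an MV-homomorphism onto $[0,b]$, the maps $\hat s_1(\cdot):=\hat s(\cdot\wedge b)/\hat s(b)$ and $\hat s_2(\cdot):=\hat s(\cdot\wedge \lambda_u(b))/\hat s(\lambda_u(b))$ are states with $\hat s=\hat s(b)\,\hat s_1+\hat s(\lambda_u(b))\,\hat s_2$ and $\hat s_1(b)=1\ne 0=\hat s_2(b)$, so $\hat s$ is a proper midpoint.

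The main obstacle is the complementary sub-case, where $A$ has no nontrivial idempotents---for instance a point-separating subalgebra of continuous $[0,1]$-valued functions carrying an integral-type state---so that the Boolean decomposition is unavailable and the mere failure of $\hat s(x\wedge y)=\min\{\hat s(x),\hat s(y)\}$ (Proposition \ref{pr:state4}) must be converted into a genuine splitting. Here I would invoke the regular Borel measure representation of $\hat s$ (the integral representation developed later in the paper, or the classical Kroupa--Panti theorem for MV-algebras): states annihilate infinitesimals, so faithfulness makes $A$ semisimple and the representing measure $\mu$ of full support, while non-simplicity forces the maximal-ideal space to contain at least two points; one then picks disjoint open sets $U_1,U_2$ with $\mu(U_i)>0$ and conditions $\mu$ on them, obtaining $\hat s=\mu(U_1)\,\hat s_1+(1-\mu(U_1))\,\hat s_2$ with $\hat s_1\ne\hat s_2$. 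Equivalently, in the $\ell$-group picture $A=\Gamma(G,u)$ this amounts to producing a nonzero additive $\phi$ with $\phi(u)=0$ and $\hat s\pm\varepsilon\phi$ both states. Turning the failure of multiplicativity into such a measure or perturbation is where the real work lies; the kernel bookkeeping of the first two steps is routine.
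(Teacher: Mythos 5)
Your first implication (state-morphism $\Rightarrow$ extremal) is exactly the paper's argument: kernel containment, maximality via Proposition \ref{pr:state5}, and identification via Theorem \ref{th:state6}(2). For the converse, both you and the paper reduce to the MV-algebra $M/\Ker(s)$ and both use the same pull-back of convex decompositions along the quotient map $\pi$; the divergence is in what happens next. The paper runs the implication forwards: extremality of $s$ forces extremality of $\hat s$ on the MV-algebra $M/\Ker(s)$ (precisely because decompositions of $\hat s$ pull back to decompositions of $s$), and then it simply cites the known MV-algebra fact that extremal states are state-morphisms, \cite[Thm 6.1.30]{DvPu}. You instead argue the contrapositive and undertake to re-prove that MV-algebra fact from scratch: Boolean conditioning when a nontrivial idempotent is available, and the Kroupa--Panti integral representation on the semisimple quotient otherwise. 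Your route is viable and more self-contained in spirit, but the hard sub-case is exactly the content of the result the paper cites, so you are doing considerably more work for the same conclusion.

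Two steps in that sub-case need tightening. First, the claim that faithfulness of $\hat s$ makes the representing measure $\mu$ have full support is not justified (it would require knowing that every nonempty weakly open subset of the maximal spectrum supports a nonzero Gelfand transform), but it is also not needed: what you actually use is that $\mu$ is not a Dirac measure, and this follows directly from $\hat s$ not being a state-morphism, since a Dirac representing measure would make $\hat s$ multiplicative; a non-Dirac regular probability measure has at least two points in its support, which you can then separate by disjoint open sets of positive measure. Second, the distinctness $\hat s_1\ne \hat s_2$ of the two conditioned states is not automatic; it follows from the uniqueness clause of the Kroupa--Panti representation (if they coincided, both conditioned measures would represent $\hat s$ and would have to equal $\mu$, which is impossible when both pieces have positive mass). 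With those repairs your argument closes, but invoking \cite[Thm 6.1.30]{DvPu} as the paper does is the economical move and avoids importing the integral representation machinery into Section 4.
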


\begin{proof}
Let $s$ be an extremal state on $M$ and define a state $\hat s$ on $M/\Ker(s)$ by Proposition \ref{pr:state3}(vii). We assert that $\hat s$ is an extremal state on $M/\Ker(s)$. Indeed, let $m_1, m_2$ be states on $m/\Ker(s)$ and $\lambda \in (0,1)$ such that $\hat s = \lambda m_1 +(1-\lambda) m_2$. There exist two states $s_1$ and $s_2$ on $M$ such that $s_i(x) = m_i([x])$ for each $x \in M$ and $i=1,2$. Then $s_i(x+y)= s_i(x)+s_i(y)$. For $m_i$ there is an element $a \in M$ such that $m_i([a])=1$, so that $s_i$ is a state on $M$. In addition, $s= \lambda s_1 +(1-\lambda)s_2$ which implies $s=s_1 = s_2$, so that $m_1=m_2$.

Due to Proposition \ref{pr:state3}(vii), $M/\Ker(s)$ is an EMV-algebra with a top element, alias, $M/\Ker(s)$ is an MV-algebra and $\hat s$ is an extremal state on the MV-algebra $M/\Ker(s)$. Hence, by \cite[Thm
6.1.30]{DvPu}, $\hat s$ is a state-morphism, consequently so is $s$ on $M$.

Conversely, let $s$ be a state-morphism on $M$ and let $s=\lambda s_1 +(1-\lambda)s_2$ for some $s_1,s_2 \in \mathcal S(M)$ and $\lambda \in (0,1)$. Then $\Ker(s) = \Ker(s_1) \cap \Ker(s_2)$ and the maximality of $\Ker(s)$ entails $\Ker(s)=\Ker(s_1)=\Ker(s_2)$, so that Proposition \ref{pr:state5} says that $s_1$ and $s_2$ are state-morphisms and by Theorem \ref{th:state6}, $s=s_1 = s_1$. Consequently, $s$ is an extremal state on $M$.
\end{proof}

\begin{theorem}\label{th:state8}
Let $M$ be an EMV-algebra. Then $\partial \mathcal S(M)=\mathcal{SM}(M)$.
\end{theorem}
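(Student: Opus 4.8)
The plan is to recognize that this theorem is, in substance, a repackaging of Proposition \ref{pr:state7}, which already establishes the decisive equivalence at the level of individual states. The only genuine task is to convert that pointwise biconditional into an equality of two subsets of $\mathcal{S}(M)$. First I would recall that by Proposition \ref{pr:state4} every state-morphism on $M$ is in fact a state, so that $\mathcal{SM}(M)\subseteq \mathcal{S}(M)$; this makes the comparison of $\mathcal{SM}(M)$ with $\partial\mathcal{S}(M)$ meaningful, since both are then subsets of the same set $\mathcal{S}(M)$.

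With that preliminary in place, the two inclusions are immediate. To show $\partial\mathcal{S}(M)\subseteq \mathcal{SM}(M)$, I would invoke the ``only if'' direction of Proposition \ref{pr:state7}: every extremal state is a state-morphism. For the reverse inclusion $\mathcal{SM}(M)\subseteq \partial\mathcal{S}(M)$, I would invoke the ``if'' direction: every state-morphism is an extremal state. Combining the two inclusions yields $\partial\mathcal{S}(M)=\mathcal{SM}(M)$.

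I do not expect any substantive obstacle here, as all the real work has been carried out in the supporting propositions. The single point worth making explicit is that the two sets are, a priori, carved out of $\mathcal{S}(M)$ by different criteria — $\partial\mathcal{S}(M)$ by the extremality (non-decomposability) condition, and $\mathcal{SM}(M)$ by the EMV-homomorphism condition — so I would state the embedding $\mathcal{SM}(M)\subseteq \mathcal{S}(M)$ of Proposition \ref{pr:state4} before concluding. Once that is noted, the theorem is literally Proposition \ref{pr:state7} read as an equality of subsets of $\mathcal{S}(M)$, and the proof reduces to a one-line citation of that proposition together with Proposition \ref{pr:state4}.
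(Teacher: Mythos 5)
Your proposal is correct and matches the paper's own proof, which likewise reduces the theorem to a direct citation of Proposition \ref{pr:state7} (the paper merely adds an explicit remark that for $M=\{0\}$ both sets are empty, a case your two-inclusion argument covers vacuously). Your observation that Proposition \ref{pr:state4} is needed to place $\mathcal{SM}(M)$ inside $\mathcal{S}(M)$ before the comparison makes sense is a sound, if minor, clarification.
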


\begin{proof}
If $M=\{0\}$, then $\mathcal S(M)=\emptyset = \mathcal{SM}(M)$. If $M \ne \{0\}$, the result is a direct corollary of Proposition \ref{pr:state7}.
\end{proof}

We say that a net $\{s_\alpha\}_\alpha$ of states on $M$ {\it converges weakly} to a state $s$ on $M$, and we write $\{s_\alpha\}_\alpha\stackrel{w} \rightarrow s$, if $\lim_\alpha s_\alpha(a)=s(a)$ for each $a\in M$. Hence, $\mathcal{S}(M)$ is a subset of $[0,1]^M$ and if we endow $[0,1]^M$ with the product topology which is a compact Hausdorff space, we see that the weak topology, which is in fact a relative topology (or a subspace topology) of the product topology of $[0,1]^M$, yields a non-empty Hausdorff topological space whenever $M\ne \{0\}$; if $M=\{0\}$, the set $\mathcal{S}(M)$ is empty. In addition, the system of subsets of $\mathcal{S}(M)$ of the form $S(x)_{\alpha,\beta}=\{s \in \mathcal{S}(M) \mid \alpha<s(x)<\beta\}$, where $x\in M$ and $\alpha < \beta$ are real numbers, forms a subbase of the weak topology of states.

The weak topology can be defined also for the set of state-morphisms in the same way as it was done for states. Due to Proposition \ref{pr:state4}, $\mathcal{SM}(M)$ is a closed subset of $\mathcal S(M)$, and $\mathcal{SM}(M)$ is also a Hausdorff space. The spaces $\mathcal S(M)$ and $\mathcal{SM}(M)$ are not necessarily compact sets because if, for a net $\{s_\alpha\}$ of states (state-morphisms), there is $s(x)=\lim_\alpha s_\alpha(x)$, $x \in M$, then $s$ preserves $+$ ($\oplus,\wedge,\vee$), but there is no guarantee that there is an element $x\in M$ with $s(x)=1$ as the following example shows.

\begin{example}\label{ex:state9}
Let $\mathcal T$ be the system of all finite subsets of the set $\mathbb N$ of natural numbers. Then $\mathcal{SM}(\mathcal T)=\{s_n \colon n \in \mathbb N\}$, where $s_n(A) = \chi_A(n)$, $A \in \mathcal T$. Given $A \in \mathcal T$, there is $s(A)=\lim_ns_n(A) = 0$, but $s$ is not a state on $\mathcal T$.
\end{example}

In the following result we show conditions when the spaces $\mathcal S(M)$ and $\mathcal{SM}(M)$ are compact in the weak topology of states.

\begin{proposition}\label{pr:state9}
Let $M$ be an EMV-algebra. Then the following statements are equivalent.
\begin{itemize}
\item[{\rm (i)}] $M$ has a top element.
\item[{\rm (ii)}] $\mathcal S(M)$ is compact.
\item[{\rm (iii)}] $\mathcal{SM}(M)$ is compact.
\end{itemize}
\end{proposition}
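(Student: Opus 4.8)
The plan is to prove the cycle of implications (i) $\Rightarrow$ (ii) $\Rightarrow$ (iii) $\Rightarrow$ (i), using the fact established just before the statement that $\mathcal{SM}(M)$ is a closed subset of $\mathcal{S}(M)$ in the weak topology, and that both sit inside the compact Hausdorff cube $[0,1]^M$ with the product topology.

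For (i) $\Rightarrow$ (ii), suppose $M$ has a top element $1$. I would show $\mathcal{S}(M)$ is a closed subset of the compact space $[0,1]^M$; closedness plus the compactness of the cube gives compactness. If $\{s_\alpha\}$ is a net of states converging pointwise to some $s \in [0,1]^M$, then $s$ preserves the partial sum $+$ (since $s(x+y)=\lim_\alpha s_\alpha(x+y)=\lim_\alpha(s_\alpha(x)+s_\alpha(y))=s(x)+s(y)$ whenever $x+y$ is defined), so the only thing a limit point can fail is the normalization condition, i.e.\ the existence of an $a$ with $s(a)=1$. But with a top element $1$ present, every state satisfies $s(1)=1$ (take $a$ with $s(a)=1$; then $a\le 1$ and $a+\lambda_1(a)=1$ forces $s(1)=s(a)+s(\lambda_1(a))\ge 1$, hence $s(1)=1$), and this condition is preserved in the limit since $s(1)=\lim_\alpha s_\alpha(1)=1$. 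Thus $\mathcal{S}(M)$ is closed, hence compact. The implication (ii) $\Rightarrow$ (iii) is then immediate: $\mathcal{SM}(M)$ is a closed subset of $\mathcal{S}(M)$ by the remark preceding the statement (Proposition \ref{pr:state4} characterizes state-morphisms by the closed conditions $s(x\wedge y)=\min\{s(x),s(y)\}$), and a closed subset of a compact space is compact.

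The main work is in the contrapositive of (iii) $\Rightarrow$ (i): if $M$ has no top element, I must produce a net of state-morphisms whose weak limit in $[0,1]^M$ fails to be a state, thereby violating compactness of $\mathcal{SM}(M)$. The idea, modeled on Example \ref{ex:state9}, is to exploit the absence of a top element via the structure of idempotents. Since $M$ has no top element, the directed family $\mathcal{I}(M)$ of idempotents has no greatest element, so for each idempotent $a$ there is a larger one, and no single maximal ideal kernel can "see" a global top. I would use the Basic Representation Theorem (Theorem \ref{th:embed}) to embed $M$ as a maximal ideal in an MV-algebra $N$; the point $1_N\notin M$ corresponds precisely to the missing top. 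Then I construct a net of state-morphisms $s_\alpha$ indexed by the directed set of idempotents (or equivalently by an increasing family witnessing that no top exists) such that $\lim_\alpha s_\alpha(x)=0$ for every $x\in M$. Concretely, choosing for each idempotent $a$ a state-morphism $s_a$ whose kernel contains $a$ (possible because $a$ lies in some maximal ideal as $a\ne$ top), and directing by $\le$ on idempotents, the pointwise limit $s$ satisfies $s(x)=0$ for all $x$, since every $x\le$ some idempotent eventually lying in the kernel. This limit $s$ preserves $+$ but has no element mapped to $1$, so $s\notin\mathcal{S}(M)$ and a fortiori $s\notin\mathcal{SM}(M)$, showing $\mathcal{SM}(M)$ is not closed in $[0,1]^M$, hence not compact.

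The hard part will be guaranteeing that the constructed net genuinely converges pointwise to the zero functional and that the indexing directed set is honestly directed with the kernel-containment property for every $x$. The delicate step is producing, for each idempotent $a$, a state-morphism annihilating $a$ while the family of such choices is coherent enough to force the pointwise limit to be identically zero; this requires that each fixed $x\le a_0$ eventually (for all $a\ge a_0$ in the net) lands in $\Ker(s_a)$, which in turn hinges on choosing the maximal ideals so that they exhaust the idempotent lattice from below. I expect to verify this using that $M=\bigcup_{a\in\mathcal{I}(M)}[0,a]$ together with the existence of maximal ideals containing any prescribed proper ideal (an application of \cite[Thm 5.6]{DvZa} and Theorem \ref{th:state6}), and the absence of a top element ensures no single state-morphism is forced to take value $1$ in the limit.
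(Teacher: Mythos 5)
Your proposal is correct, and the first two implications coincide with the paper's argument: closedness of $\mathcal S(M)$ in the compact cube $[0,1]^M$ when a top element forces $s(1)=1$, and then $\mathcal{SM}(M)$ compact as a closed subset. The real divergence is in (iii) $\Rightarrow$ (i). The paper argues directly: the sets $S(a)=\{s\in\mathcal{SM}(M)\colon s(a)>0\}$, $a\in\mathcal I(M)$, form an open cover of $\mathcal{SM}(M)$; compactness yields a finite subcover whose union is $S(a_0)$ with $a_0=a_1\vee\cdots\vee a_n$, and then one shows the ideal generated by $a_0$ must be all of $M$ (otherwise a maximal ideal above it would give a state-morphism in $S(a_0)\setminus S(a_0)$), so $a_0$ is a top element. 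You instead prove the contrapositive by exhibiting a net witnessing non-compactness: index by the directed set $\mathcal I(M)$ (directed since $a\oplus b$ is an idempotent above $a,b$), pick for each idempotent $a$ a state-morphism $s_a$ with $a\in\Ker(s_a)$ --- legitimate because the ideal generated by the idempotent $a$ is $[0,a]$, which is proper when no top element exists, hence contained in a maximal ideal, which is the kernel of a state-morphism by Theorem \ref{th:state6} --- and observe that for fixed $x\le a_0\in\mathcal I(M)$ one has $s_a(x)=0$ for all $a\ge a_0$, so the net converges pointwise to the zero function, which is not a state-morphism; since compact subsets of the Hausdorff space $[0,1]^M$ are closed, $\mathcal{SM}(M)$ cannot be compact. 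Both routes are sound and use the same ingredients (maximal ideals, Theorem \ref{th:state6}). The paper's version has the virtue of actually constructing the top element $a_0$ from the finite subcover; yours produces an explicit escaping net, which generalizes Example \ref{ex:state9} and anticipates the later picture in which the zero function is the extra point $s_\infty$ of the one-point compactification of $\mathcal{SM}(M)$.
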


\begin{proof}
If $M=\{0\}$, then $0$ is the top element and $\mathcal S(M)=\emptyset = \mathcal{SM}(M)$.  Thus let $M\ne \{0\}$.

(i) $\Rightarrow$ (ii),(iii).  If $1$ is the top element of $M$, then $s(1)=1$ for each state $s$ on $M$. Therefore, $\mathcal S(M)$ and $\mathcal{SM}(M)$ are closed in the product topology on $[0,1]^M$, so that both sets are compact in the weak topology.

(ii) $\Rightarrow$ (iii). If $\mathcal S(M)$ is compact, then $\mathcal{SM}(M)$, which is a closed subset of $\mathcal S(M)$, has to be compact, too.

(iii) $\Rightarrow$ (i). Given $x \in M$, let $S(x)=\{s \in \mathcal{SM}(M) \colon s(x)>0\}$. Then each $S(x)$ is an open set of $\mathcal{SM}(M)$. Given $s \in \mathcal{SM}(M)$, there is an idempotent $a \in M$ such that $s(a)=1$, so that $s \in S(a)$ which means that $\{S(a)\colon a \in \mathcal I(M)\}$ is an open cover of $\mathcal{SM}(M)$. The compactness of $\mathcal{SM}(M)$ entails there are elements $a_1,\ldots,a_n \in M$ such that $\mathcal{SM}(M) = \bigcup_{i=1}^n S(a_i)= S(a_0)$, where $a_0=a_1\vee \cdots \vee a_n$. Let $I_{a_0}$ be the ideal of $M$ generated by $a_0$. If we set $S(I_{a_0})=\{s \in \mathcal{SM}(M) \colon \Ker(s) \not\supseteq I_{a_0}\}$, then $S(I_{a_0})=S(a_0)$. We assert that $I_{a_0} = M$, if not then $I_{a_0}$ is a proper ideal of $M$, and there is a maximal ideal $I$ of $M$ containing $I_{a_0}$. Due to Theorem \ref{th:state6}, $I=\Ker(s)$ for some $s \in \mathcal{SM}(M)$, which implies $s\in S(a_0)$ and $s\notin S(I_{a_0})=S(a_0)$, a contradiction. Therefore, $I_{a_0}=M$ which means that for each $x \in M$, $x\in I_{a_0}$ and consequently, $x \le n.a_0=a_0$, confirming $a_0$ is a top element of $M$.
\end{proof}

If $s_1,\ldots,s_n \in \mathcal S(M)$ and  real numbers $\lambda_1,\ldots,\lambda_n \in [0,1]$ satisfy $\sum_{i=1}^n \lambda_i =1$, then $s= \sum_{i=1}^n \lambda_is_i$ is also a state of $M$ and $s$ is said to be a {\it convex combination} of $s_1,\ldots,s_n$. If $X$ is a non-empty set of states, then $\Con(X)$ means the {\it convex hull} generated by $X$, i.e. $\Con(X)$ is the set of all convex combinations of states from $X$. We denote by $(\Con(X))^-$ the closure of $\Con(X)$ in the weak topology of states. If $M$ has a top element, i.e. $M$ is in fact an MV-algebra, then due to Krein--Mil'man theorem, see \cite[Thm 5.17]{Goo}, $\mathcal S(M) = (\Con(\partial \mathcal S(M)))^-$. Since the Krein--Mil'man theorem is formulated for compact convex sets, if $M$ has no top element, as we have seen in Proposition \ref{pr:state9}, $\mathcal S(M)$ is not compact, so that we cannot apply directly the Krein--Mil'man theorem for $\mathcal S(M)$. In what follows, Theorem \ref{th:state12} below, we show that anyway we have

\begin{equation}\label{eq:KM}
\mathcal S(M)= (\Con(\mathcal{SM}(M)))^{-_M},
\end{equation}
where $^{-_M}$ denotes the closure taken in the weak topology of states on $M$.

To prove that, we use the Basic Representation Theorem, see \cite[Thm 5.21]{DvZa} or Theorem \ref{th:embed}, which says that for any EMV-algebra, there is an MV-algebra $N$ such that either $M=N$ (if $M$ has a top element) or $M$ is a maximal ideal of $N$ (if $M$ has no top element), and each element $x$ of $N$ is either $x=x_0\in M$ or $x=\lambda_1(x_0)$ for some element $x_0 \in M$, where $1$ is the top element of $N$. States and state-morphisms on $N$ we can describe as follows.

\begin{proposition}\label{pr:state11}
Let $M$ be an EMV-algebra without top element. For each $x \in M$, we put $x^*=\lambda_1(x)$, where $1$ is the top element of the representing MV-algebra $N$.
Given a state $s$ on $M$, the mapping $\tilde s: N \to [0,1]$, defined by
\begin{equation}\label{eq:state}
\tilde s(x)=\begin{cases}
s(x) & \text{ if } x\in M,\\
1-s(x_0) & \text{ if }  x=x^*_0,\ x_0 \in M,
\end{cases} \quad x \in N,
\end{equation}
is a state on $N$, and the mapping $s_\infty: N \to [0,1]$ defined by $s_\infty(x)=0$ if $x\in M$ and $s_\infty(x)=1$ if $x\in N\setminus M$, is a state-morphism on $N$. If $s$ is a state-morphism on $M$, then $\tilde s$ is a state-morphism on $N$.
Moreover, $\mathcal{SM}(N)=\{\tilde s \mid s \in \mathcal{SM}(M)\}\cup\{s_\infty\}$ and $\Ker(\tilde s) = \Ker(s) \cup \Ker_1^*(s)$, $s \in \mathcal{SM}(M)$, where $\Ker^*_1(s)=\{\lambda_1(x)\mid x \in \Ker_1(s)\}$.

A net $\{s_\alpha\}_\alpha$ of states on $M$ converges weakly to a state  $s$ on $M$ if and only if $\{\tilde s_\alpha\}_\alpha$ converges weakly to $\tilde s$ on $N$, and the mapping $\phi: \mathcal S(M)\to \mathcal S(N)$ defined by $\phi(s)=\tilde s$, $s \in \mathcal S(M)$, is injective, continuous and affine.
\end{proposition}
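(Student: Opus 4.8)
The whole statement hinges on one structural fact extracted from Theorem~\ref{th:embed}: writing $M^*=\{x^*\mid x\in M\}$, the representing MV-algebra splits as a \emph{disjoint} union $N=M\sqcup M^*$. Disjointness is forced because $M$ is a proper ideal closed under $\oplus$: if some $x_0$ had both $x_0,x_0^*\in M$, then $1=x_0\oplus x_0^*\in M$, which is impossible. Hence the two clauses of \eqref{eq:state} never overlap, $\tilde s$ is unambiguously defined, and by construction $\tilde s(x^*)=1-\tilde s(x)$ for every $x\in N$. The bulk of the work is to prove that $\tilde s$ is a state on $N$; once that is in hand the remaining assertions are essentially bookkeeping, so I would dispatch them afterwards in turn.

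First I would verify the two state axioms for $\tilde s$ on the MV-algebra $N$. Since $1=0^*$, we get $\tilde s(1)=1-s(0)=1$. For additivity I would take $x,y\in N$ with $x\odot y=0$, i.e. $x\le y^*$, and argue by cases on membership in $M$ or $M^*$. If $x,y\in M$, choose an idempotent $a\in\mathcal I(M)$ with $x,y\le a$; then $x\oplus y\le a$ lies in $M$, the partial sum $x+y$ of the GEA $M$ agrees with that of $N$, and $\tilde s(x\oplus y)=s(x+y)=s(x)+s(y)$. If $x\in M$ and $y=y_0^*\in M^*$, then $x\le y^*=y_0$; picking an idempotent $a\in\mathcal I(M)$ above $x$ and $y_0$ and using De~Morgan together with \eqref{eq:x<y}, I would rewrite $x\oplus y_0^*=(y_0\odot\lambda_a(x))^*$ with $y_0\odot\lambda_a(x)\in M$, whence Proposition~\ref{pr:state3}(ii) gives $\tilde s(x\oplus y_0^*)=1-s(y_0\odot\lambda_a(x))=1-(s(y_0)-s(x))=\tilde s(x)+\tilde s(y)$; the symmetric case is identical. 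Finally the case $x,y\in M^*$ is vacuous, since $x\le y^*\in M$ and the down-set property of $M$ would force $x\in M$, contradicting $x\in M^*$. Thus $\tilde s\in\mathcal S(N)$. An entirely analogous but easier case check shows $s_\infty$ preserves $0,\oplus,{}^*$ and attains the value $1$, so $s_\infty\in\mathcal{SM}(N)$ with $\Ker(s_\infty)=M$.

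Next I would upgrade ``state'' to ``state-morphism''. Assuming $s\in\mathcal{SM}(M)$, I would prove $\tilde s$ extremal and invoke Proposition~\ref{pr:state7} for the MV-algebra $N$. Suppose $\tilde s=\lambda t_1+(1-\lambda)t_2$ with $t_1,t_2\in\mathcal S(N)$ and $\lambda\in(0,1)$. As $s$ is a state on $M$ there is $a\in M$ with $s(a)=1$; from $1=\tilde s(a)=\lambda t_1(a)+(1-\lambda)t_2(a)$ and $t_i(a)\le1$ we get $t_1(a)=t_2(a)=1$, so each restriction $t_i|_M$ is a genuine state on $M$ (additivity survives because the partial sums of $M$ and $N$ agree on $M$). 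Then $s=\lambda(t_1|_M)+(1-\lambda)(t_2|_M)$, and extremality of $s$ on $M$ yields $t_1|_M=t_2|_M=s$; on $M^*$ the relation $t_i(x_0^*)=1-t_i(x_0)=1-s(x_0)=\tilde s(x_0^*)$ forces $t_1=t_2=\tilde s$. Hence $\tilde s$ is extremal, i.e. a state-morphism on $N$.

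It remains to identify $\mathcal{SM}(N)$, compute the kernel, and treat $\phi$. For the inclusion $\subseteq$, let $t\in\mathcal{SM}(N)$, so $\Ker(t)$ is a maximal ideal. If $M\subseteq\Ker(t)$, maximality of $M$ gives $\Ker(t)=M$, and Theorem~\ref{th:state6} forces $t=s_\infty$. Otherwise some $b\in M$ has $t(b)>0$; choosing an idempotent $a\in\mathcal I(M)$ with $b\le a$ gives $t(a)\in\{0,1\}$ and $t(a)\ge t(b)>0$, so $t(a)=1$ and $s:=t|_M$ is a state on $M$ preserving $\wedge$, hence a state-morphism by Proposition~\ref{pr:state4}; then $t(x_0^*)=1-t(x_0)$ gives $t=\tilde s$. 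The kernel formula is read off from \eqref{eq:state}: $\Ker(\tilde s)\cap M=\Ker(s)$ and $\Ker(\tilde s)\cap M^*=\{x_0^*\mid s(x_0)=1\}=\Ker^*_1(s)$. Finally, since $\tilde s$ is completely determined by its values on $M$ via \eqref{eq:state}, affineness and injectivity of $\phi$ follow by comparing the defining formulas, while the equivalence $\{s_\alpha\}_\alpha\stackrel{w}{\rightarrow}s\iff\{\tilde s_\alpha\}_\alpha\stackrel{w}{\rightarrow}\tilde s$ follows by testing at points of $M$ for the nontrivial direction and at points of $M^*$ through $\tilde s_\alpha(x_0^*)=1-s_\alpha(x_0)$; continuity of $\phi$ is precisely the forward implication. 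The one genuinely delicate point is the additivity case analysis, and within it the observation that the $M^*\times M^*$ case cannot occur; everything else is routine.
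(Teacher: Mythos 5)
Your proof is correct, and its core --- the three-way case analysis establishing additivity of $\tilde s$ (both arguments in $M$; one in $M$ and one in $M^*$ via the identity $x\oplus y_0^*=(y_0\odot\lambda_a(x))^*$ and Proposition \ref{pr:state3}(ii); both in $M^*$ impossible) --- is the same as the paper's, as is the identification of $\mathcal{SM}(N)$ by splitting on whether some idempotent of $M$ gets value $1$. Two points differ. First, your impossibility argument for the $M^*\times M^*$ case is cleaner: you use only that $M$ is a down-set containing $y^*=y_0$, whereas the paper passes to the representing unital $\ell$-group $\Gamma(G,u)$ and derives the contradiction $u\le x_0+y_0\in M$; both work, but yours avoids invoking Mundici's theorem. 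Second, and more substantively, for the implication ``$s$ a state-morphism $\Rightarrow$ $\tilde s$ a state-morphism'' the paper verifies directly that $\tilde s(x\oplus y)=\tilde s(x)\oplus\tilde s(y)$ by another three-case computation, while you instead show $\tilde s$ is an extremal state on $N$ (using that $t_i(a)=1$ forces the restrictions $t_i|_M$ to be states, extremality of $s$ on $M$ via Proposition \ref{pr:state7}, and the relation $t_i(x_0^*)=1-t_i(x_0)$) and then convert extremality back to the state-morphism property. Your route is shorter but leans on the equivalence of Proposition \ref{pr:state7} in both directions (for $M$ and for $N$); this is legitimate since that proposition precedes the present one and does not depend on it, whereas the paper's computation is self-contained and elementary. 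The remaining assertions (kernel formula, weak convergence, injectivity, continuity and affineness of $\phi$) are dispatched in both treatments as immediate consequences of formula \eqref{eq:state}.
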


\begin{proof}
Due to Mundici's result, there is an Abelian unital $\ell$-group $(G,u)$ such that $N \cong \Gamma(G,u)$. Without loss of generality, we can assume $N=\Gamma(G,u)$. Then if $x=x^*_0$ for some $x_0 \in M$, then $x= u-x_0$, where $-$ is the group subtraction taken from the group $G$.

Let $s$ be a state on $M$ and define $\tilde s$ by (\ref{eq:state}). Then $\tilde s(1)=1$. Let $x,y \in N$ and $x\odot y = 0$. There are three cases: (i) $x=x_0,y=y_0\in M$. Then $\tilde s(x+y)= s(x_0+y_0)=s(x_0)+s(y_0)= \tilde s(x)+\tilde s(y)$. (ii) $x= x_0\in M$ and $y=y^*_0$ where $y_0 \in M$. $x\odot y = 0$ implies $u-y_0\le u- x_0$, i.e. $x_0\le y_0$. There is an idempotent $a\in \mathcal I(M)$ such that $x_0\le y_0 \le a$ and $s(a)=1$.
Since $x+y=x\oplus y =x_0\oplus y_0^*= (y_0\odot x^*_0)^*= (y_0\odot \lambda_a(x_0))^*$, see (2.2), it yields by Proposition \ref{pr:state3}(ii) $\tilde s(x\oplus y)=1-s(y_0\odot \lambda_a(x_0))= 1- s(y_0)+s(x_0)=\tilde s(x)+\tilde s(y)$. (iii) $x=x^*_0$, $y=y^*_0$ for some $x_0,y_0 \in M$. Then $x\odot y = 0$ entails $x^*_0\le y_0$, i.e. $u-y_0 \le x_0$ and $u \le x_0 + y_0 \in M$, so that $u \in M$ which is absurd. So that this case is impossible. Therefore, $\tilde s$
is a state on $N$.

If $s$ is a state-morphism, we proceed in a similar way as for states. Let $x,y \in N$. We have again three cases: (i) $x=x_0$, $y=y_0$, $x_0,y_0 \in M$, which is trivial. (ii) $x=x_0$, $y=y^*_0$ for $x_0,y_0\in M$. Then there exists an idempotent $a\in \mathcal I(M)$ such that $x_0,y_0 \le b$ and $s(a)=1$. Since $x\oplus y =x_0\oplus y_0^*= (y_0\odot x^*_0)^*= (y_0\odot \lambda_a(x_0))^*$ which yields $\tilde s(x\oplus y)=1-s(y_0\odot \lambda_a(x_0))= 1-(s(y_0)\odot (s(a)-s(x_0)) =(1-s(y_0))\oplus s(x_0) = \tilde s(x)\oplus s(y)$.
(iii) Let $x=x^*_0$, $y=y^*_0$ for $x_0,y_0\in M$. Then $x\oplus y = (x_0\odot y_0)^*$, so that $\tilde s(x\oplus y)= 1- s(x_0\odot y_0)=1-s(x_0)\odot s(y_0)= \tilde s(x)\oplus \tilde s(y)$.

The mapping $s_\infty$ is evidently a state-morphism on $N$. Now let $s$ be any state-morphism on $N$. There are two cases: (i) For each idempotent $a \in M$, we have $s(a)=0$. Then $s(x)=0$ for each $x \in M$, i.e. $s = s_\infty$. (ii) There is an idempotent $a\in M$ such that $s(a)=1$. Then the restriction of $s$ onto $M$ is a state-morphism on $M$, say $s_0$, so that that $s = \tilde s_0$.

The rest properties are straightforward.
\end{proof}

According to \cite[Thm 4.10]{DvZa1}, if $M$ has no top element, then $\mathcal{SM}(M)$ is locally compact but not compact. For the state space $\mathcal S(M)$, we have it is even not locally compact as it follows from the following result.

\begin{theorem}\label{th:local}
Let $M$ be an EMV-algebra. The following statements are equivalent:
\begin{itemize}
\item[{\rm (i)}]  The state space $\mathcal S(M)$ is locally compact.
\item[{\rm (ii)}] The state space $S(M)$ is compact.
\item[{\rm (iii)}] $M$ has a top element.
\end{itemize}
\end{theorem}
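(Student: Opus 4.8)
The plan is to close the cycle of implications (iii) $\Rightarrow$ (ii) $\Rightarrow$ (i) $\Rightarrow$ (iii). Two of these are immediate: (iii) $\Rightarrow$ (ii) is precisely Proposition \ref{pr:state9}, and (ii) $\Rightarrow$ (i) holds because every compact space is locally compact. Hence the entire content lies in (i) $\Rightarrow$ (iii), which I would establish in contrapositive form: assuming $M$ has no top element, I would show that $\mathcal{S}(M)$ fails to be locally compact.

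So suppose $M$ has no top element and let $N$ be the representing MV-algebra of Theorem \ref{th:embed}. By Proposition \ref{pr:state11}, the affine injection $\phi:\mathcal{S}(M)\to\mathcal{S}(N)$, $\phi(s)=\tilde s$, preserves and reflects weak convergence of nets, so it is a homeomorphism onto its image $Y:=\phi(\mathcal{S}(M))$. Since $N$ has a top element, $\mathcal{S}(N)$ is a compact Hausdorff space (Proposition \ref{pr:state9}), and as local compactness is a topological invariant, it suffices to prove that $Y$ is not locally compact as a subspace of $\mathcal{S}(N)$. Here I would invoke the standard fact that a locally compact subspace of a compact Hausdorff space must be open in its closure (i.e.\ locally closed); so it is enough to show that $\bar Y\setminus Y$ is not closed, by exhibiting a point of $Y$ lying in the closure of $\bar Y\setminus Y$.

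The two ingredients are the point at infinity $s_\infty$ and convexity. First, $s_\infty\notin Y$: if $s_\infty=\tilde s$ for a state $s$ on $M$, choosing $a\in M$ with $s(a)=1$ would force $s_\infty(a)=1$, contradicting $s_\infty\equiv 0$ on $M$. Second, $s_\infty\in\bar Y$: index a net by the upward-directed set $\mathcal{I}(M)$ (recall $a,b\in\mathcal{I}(M)$ gives $a\oplus b\in\mathcal{I}(M)$ above both), and for each idempotent $a$ use Theorem \ref{th:state6} to pick a state-morphism $s_a$ whose kernel is a maximal ideal containing the proper ideal $[0,a]$, so that $s_a(a)=0$; monotonicity (Proposition \ref{pr:state3}(ii)) then gives $s_a(x_0)=0$ eventually for each fixed $x_0\in M$, whence $\tilde s_a\to s_\infty$ along this net. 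Since $Y$ is the affine image of the convex set $\mathcal{S}(M)$ it is convex, and therefore so is its closure $\bar Y$ inside the topological vector space $\mathbb{R}^N$. Now fix any $\tilde s_0\in Y$ and, for $\lambda\in[0,1)$, set $t_\lambda:=\lambda\tilde s_0+(1-\lambda)s_\infty$. Each $t_\lambda$ lies in $\bar Y$ by convexity, but $t_\lambda\notin Y$: for every idempotent $a\in M$ we have $t_\lambda(a)=\lambda s_0(a)\le\lambda<1$, and since every element of $M$ lies below an idempotent, monotonicity shows no element of $M$ is sent to $1$ by $t_\lambda$. Thus $t_\lambda\in\bar Y\setminus Y$ while $t_\lambda\to\tilde s_0\in Y$ as $\lambda\to 1^-$, so $\bar Y\setminus Y$ is not closed; hence $Y$, and therefore $\mathcal{S}(M)$, is not locally compact.

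I expect the main obstacle to be the topological bookkeeping rather than any hard computation: one must carefully justify that $\phi$ is a genuine homeomorphism onto $Y$ (so local compactness transfers), that $\bar Y$ is convex, and correctly apply the characterization of locally compact subspaces of a compact Hausdorff space as the locally closed ones. The conceptual crux is recognizing that $s_\infty$ is a single missing boundary point of $Y$, and then exploiting convexity to convert it into an entire segment $\{t_\lambda\}$ of missing points that accumulates back onto a genuine state $\tilde s_0\in Y$ — which is exactly what obstructs local compactness.
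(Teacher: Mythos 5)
Your proof is correct, and it reaches the key implication (i) $\Rightarrow$ (iii) by a genuinely different topological route than the paper, even though both arguments turn on the same two objects: the state-morphism $s_\infty$ sitting in the weak closure of $\phi(\mathcal S(M))$, and the segment of convex combinations $\lambda\tilde s_0+(1-\lambda)s_\infty$. The paper argues by contradiction: it assumes $\mathcal S(M)$ is locally compact but not compact, forms the Alexandroff one-point compactification $\phi(X)\cup\{x^*_\infty\}$, uses the net of state-morphisms converging to $s_\infty$ (citing \cite[Thm 4.13]{DvZa1}) to identify $x^*_\infty$ with $s_\infty$, and then observes that the states $\lambda\tilde s+(1-\lambda)s_\infty$ produce uncountably many distinct weak limits of nets from $\phi(X)$ lying outside $\phi(X)$ --- too many to be absorbed by a single added point. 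You instead embed $Y=\phi(\mathcal S(M))$ into the compact Hausdorff space $\mathcal S(N)$ and invoke the characterization of locally compact subspaces of a (locally) compact Hausdorff space as those open in their closure; you then kill local compactness by showing $\bar Y\setminus Y$ is not closed, since $t_\lambda=\lambda\tilde s_0+(1-\lambda)s_\infty\in\bar Y\setminus Y$ (by convexity of $\bar Y$ and the fact that $t_\lambda$ attains no value $1$ on $M$) while $t_\lambda\to\tilde s_0\in Y$ as $\lambda\to 1^-$. Your version buys a cleaner and more rigorous formulation: the paper's step ``there is no one-point compactification of $\mathcal S(M)$'' is delicate to make precise (every locally compact non-compact Hausdorff space does admit a one-point compactification, so the contradiction really lives in the comparison with the closure in $\mathcal S(N)$), whereas the locally-closed criterion makes the obstruction completely explicit. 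You also supply your own construction of the net $\tilde s_a\to s_\infty$ indexed by idempotents, rather than citing \cite[Thm 4.13]{DvZa1}; that construction is sound, since for $M$ without top element the ideal $[0,a]$ generated by an idempotent $a$ is proper, hence contained in a maximal ideal, hence killed by some state-morphism. The only points requiring the care you already flag are that $\phi$ is a homeomorphism onto its image (Proposition \ref{pr:state11} gives exactly the preservation and reflection of weak convergence needed) and that the closure of a convex set in the topological vector space $\mathbb R^N$ is convex; both are standard.
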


\begin{proof}
If $M$ has a top element, then $\mathcal S(M)$ is compact, so that it is locally compact, i.e. (iii) $\Rightarrow$ (ii) $\Rightarrow$  (i), and by Proposition \ref{pr:state9}, (ii) and (iii) are equivalent.

(i) $\Rightarrow$  (ii) Now, assume that $X=\mathcal{S}(M)$ is locally compact but not compact in the weak topology of states, therefore, $M$ has no top element. Let $N$ be the MV-algebra representing $M$ such that $M$ is a maximal ideal of $N$, and every element $x \in N$ either belongs to $M$ or $\lambda_1(x)\in M$. According to the Alexander theorem, see  \cite[Thm 4.21]{Kel}, there is a compact space $X^*=X \cup \{x_\infty\}$, where $x_\infty \notin X$. Define a mapping $\phi:\mathcal S(M) \to \mathcal S(N)$ given by $\phi(s)=\tilde s$, $s \in \mathcal S(M)$, where $\tilde s$ is defined by (\ref{eq:state}). Then a net of states $\{s_\alpha\}_\alpha$ converges weakly to a state $s\in \mathcal S(M)$ iff $\{\tilde s_\alpha\}_\alpha$ converges weakly to $\tilde s$ on $N$. Therefore, $\phi$ maps $X$ onto the set $\phi(X)=\{\tilde s \colon s \in \mathcal S(M)\}$, so that $\phi$ is a homeomorphism from $X$ onto $\phi(X)$. Then also $\phi(X)$ has the one-point compactification $(\phi(X))^*=\phi(X)\cup\{x^*_\infty\}$, where $x^*_\infty \notin \phi(X)$. But for the state-morphism $s_\infty$ on $N$ given by $s_\infty(x)=0$ if $x \in M$ and $s_\infty(x)=1$ for $x \in N\setminus M$, there is a net $\{t_\beta\}_\beta$ of state-morphisms on $M$, such that $\{\tilde t_\beta\}_\beta$ converges weakly to $s_\infty$ on $N$, for more details see \cite[Thm 4.13]{DvZa1}. Therefore, $t_\beta \in X$ and $\tilde t_\beta \in \phi(X)$ for each index $\beta$ and $\lim_\beta t_\beta(x)=0$ for each $x \in M$ and $\tilde s_0=s_\infty$. On the other hand, since $(\phi(X))^*$ is compact, there is a subnet $\{\tilde t_{\beta_\alpha}\}_\alpha$ of the net $\{\tilde t_\beta\}_\beta$ which converges to some point $x^*\in \phi(X)\cup \{x^*_\infty\}$. Then $x^*=x^*_\infty=s_\infty$.

Now let $s$ be any state-morphism on $M$ and for each $\lambda \in (0,1)$ we set $t^\lambda_\beta = \lambda s+ (1-\lambda)t_\beta$. Then $t^\lambda_\beta\in X$ and $\phi(t^\lambda_\beta)= \lambda \tilde s +(1-\lambda)\tilde t_\beta \in \phi(X)$ for each index $\beta$. Since $\{\phi(t^\lambda_\beta)\}_\beta$ converges weakly on $N$ to $\lambda \tilde s +(1-\lambda)s_\infty$ so that $\lambda \tilde s +(1-\lambda)s_\infty \in (\phi(X))^*=\phi(X)\cup \{s_\infty\}$. But $\lambda \tilde s +(1-\lambda)s_\infty$ gives for each $\lambda \in (0,1)$ countably many mutually different states on $N$ not belonging to $\phi(X)$, which says that there is no one-point compactification of $\mathcal S(M)$. Hence, our assumption that $\mathcal S(M)$ is not compact was wrong, and $\mathcal S(M)$ has to be compact.
\end{proof}

Now we establish (\ref{eq:KM}) for each EMV-algebra.

\begin{theorem}\label{th:state12} {\rm [Krein--Mil'man Representation of States]}
Let $M$ be an EMV-algebra. Then
$$\mathcal S(M)= (\Con(\mathcal{SM}(M)))^{-_M}$$
where $^{-_M}$ denotes the closure in the weak topology of states on $M$.
\end{theorem}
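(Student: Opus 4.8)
The plan is to reduce the statement to the classical Krein--Mil'man theorem for MV-algebras via the Basic Representation Theorem (Theorem \ref{th:embed}) and the state-correspondence of Proposition \ref{pr:state11}. I would first dispose of the trivial inclusion: because the closure $^{-_M}$ is formed inside the space $\mathcal S(M)$ equipped with its weak topology, and because every convex combination of state-morphisms is again a state (Proposition \ref{pr:state4} together with Proposition \ref{pr:state3}(vi)), one gets $(\Con(\mathcal{SM}(M)))^{-_M} \subseteq \mathcal S(M)$ for free. If $M$ has a top element, then $M$ is an MV-algebra, $\mathcal S(M)$ is compact by Proposition \ref{pr:state9}, and applying \cite[Thm 5.17]{Goo} together with $\partial\mathcal S(M) = \mathcal{SM}(M)$ (Theorem \ref{th:state8}) finishes the equality. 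So from now on I would assume $M$ has no top element and prove only the nontrivial inclusion $\mathcal S(M) \subseteq (\Con(\mathcal{SM}(M)))^{-_M}$.

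Fix a state $s \in \mathcal S(M)$ and let $N$ be a representing MV-algebra with top element $1$. By Proposition \ref{pr:state11}, $\tilde s$ is a state on $N$ and $\mathcal{SM}(N) = \{\tilde t \mid t \in \mathcal{SM}(M)\} \cup \{s_\infty\}$. Since $N$ has a top element, $\mathcal S(N)$ is compact, and by Theorem \ref{th:state8} (applied to $N$) we have $\partial\mathcal S(N) = \mathcal{SM}(N)$, so the classical Krein--Mil'man theorem provides a net $\{w_\gamma\}$ in $\Con(\mathcal{SM}(N))$ converging weakly on $N$ to $\tilde s$. I would write each term as $w_\gamma = \sum_i \lambda_{i,\gamma}\,\tilde t_{i,\gamma} + \mu_\gamma s_\infty$ with $t_{i,\gamma} \in \mathcal{SM}(M)$, all coefficients nonnegative and $\sum_i \lambda_{i,\gamma} + \mu_\gamma = 1$.

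The main obstacle --- and the only point where the missing top element truly intervenes --- is the extra state-morphism $s_\infty$, which lies in $\mathcal{SM}(N)$ but is not the image under $\phi$ of any state on $M$, so it must be purged from the approximation. The crucial claim is that its weight $\mu_\gamma$ dies in the limit. To see this I would choose $a \in M$ with $s(a) = 1$; then $s_\infty(a) = 0$ and $\tilde t_{i,\gamma}(a) = t_{i,\gamma}(a) \le 1$ give
$$w_\gamma(a) = \sum_i \lambda_{i,\gamma}\, t_{i,\gamma}(a) \le \sum_i \lambda_{i,\gamma} = 1 - \mu_\gamma,$$
so that $0 \le \mu_\gamma \le 1 - w_\gamma(a)$; since $w_\gamma(a) \to \tilde s(a) = s(a) = 1$, this forces $\mu_\gamma \to 0$. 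Restricting to the cofinal set of indices with $\mu_\gamma < 1$ (which is cofinal because $\mu_\gamma \to 0$), I would then set
$$\hat v_\gamma := \frac{1}{1-\mu_\gamma}\sum_i \lambda_{i,\gamma}\, t_{i,\gamma} \in \Con(\mathcal{SM}(M)),$$
a genuine convex combination of state-morphisms on $M$. For each $x \in M$ one has $s_\infty(x) = 0$, whence $w_\gamma(x) = \sum_i \lambda_{i,\gamma}\, t_{i,\gamma}(x)$ and therefore $\hat v_\gamma(x) = (1-\mu_\gamma)^{-1} w_\gamma(x) \to s(x)$, using $w_\gamma(x) \to s(x)$ and $\mu_\gamma \to 0$. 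Thus $\hat v_\gamma \to s$ in the weak topology of states on $M$, giving $s \in (\Con(\mathcal{SM}(M)))^{-_M}$, as required. I expect the verification that $\mu_\gamma \to 0$, and hence that the renormalization by $(1-\mu_\gamma)^{-1}$ is legitimate, to be the heart of the argument, everything else being a routine transfer along $\phi$.
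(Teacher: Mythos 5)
Your proposal is correct and follows essentially the same route as the paper: extend $s$ to $\tilde s$ on the representing MV-algebra $N$, apply the classical Krein--Mil'man theorem there, observe that the coefficient of $s_\infty$ in the approximating convex combinations must tend to $0$ because $s$ attains the value $1$ at some $a\in M$, and renormalize to land in $\Con(\mathcal{SM}(M))$. Your derivation of $\mu_\gamma\to 0$ is in fact stated more cleanly than in the paper, whose corresponding lines contain sign/typo slips (writing $\lim_\alpha\lambda_0^\alpha=1$ where $0$ is clearly intended, as the subsequent division by $1-\lambda_0^\alpha$ shows).
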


\begin{proof}
If $M$ has a top element, then $\mathcal S(M)$ is a compact set in the weak topology. A direct application of the Krein--Mil'man theorem to Theorem \ref{th:state8} gives the result. If $M= \{0\}$, then $\mathcal S(M)=\mathcal{SM}(M)=\emptyset$, so that the result holds also in this case. Finally, let $M\ne\{0\}$.

Now, let $M$ have no top element. Using the Basic Representation Theorem for EMV-algebras, see Theorem \ref{th:embed}, there is an MV-algebra $N$ such that $M$ is its maximal ideal and every element $x\in N$ is either $x \in M$ or $x^* \in M$. Then for the state space of $N$ we have $\mathcal S(N)= (\Con(\mathcal{SM}(N)))^{-_N}$, where $^{-_N}$ is the closure in the weak topology of states on $N$. Take an arbitrary state $s$ on $M$ that is not extremal, equivalently, $s$ is not a state-morphism on $M$. There is a net $\{s_\alpha\}_\alpha$ of convex combinations from $\mathcal{SM}(N)$ such that $\{s_\alpha\}_\alpha$ converges weakly to $\tilde s$ on $N$.  Since also $\tilde s$ is not an extremal state on $N$, without loss of generality we can assume that each $s_\alpha$ is not a state-morphism.

In addition, let $s_\alpha = \lambda_0^\alpha s_\infty + \sum_{i=1}^{n_\alpha} \lambda^\alpha_i \tilde s^\alpha_i$, where all $\lambda$'s are from $[0,1]$, $\sum_{i=0}^{n_\alpha} \lambda^\alpha_i =1$, and $s^\alpha_i \in \mathcal{SM}(M)$ for $i=1,\ldots,n_\alpha$ and for each $\alpha$. If there is an index $\alpha_0$ such that for each $\alpha>\alpha_0$, we have $\lambda^\alpha_0 =0$ which gives $s \in (\Con(\mathcal{SM}(M)))^{-_M}$. Therefore, we can assume also that each $\lambda^\alpha_0>0$, or to pass to its subnet with such a property, if necessary. In addition, we can assume  $\lambda_0^\alpha <1$ for each $\alpha$, otherwise $s_\alpha =s_\infty$, $s_\alpha(x)=0$ and $s(x)=0$ for each $x \in M$, which is impossible.

Since $s$ is a state on $M$, there is an element $a \in M$ such that $s(a)=1$. Then $s_\alpha(a)= \sum_{i=1}^{n_\alpha} \lambda^\alpha_i \tilde s^\alpha_i(a) \le \sum_{i=1}^{n_\alpha} \lambda^\alpha_i = 1-\lambda^\alpha_0\le 1$. Then $1= \liminf_\alpha s_\alpha(a) \le \liminf_\alpha (1-\lambda^\alpha_0)\le 1$, so that $\limsup_
\alpha \lambda^\alpha_0=1$ and similarly, $1= \limsup_\alpha s_\alpha(a) \le \limsup_\alpha (1-\lambda^\alpha_0)\le 1$, i.e. $\liminf_\alpha \lambda^\alpha_0 = 1$. Whence, $\lim_\alpha \lambda^\alpha_0$ exists and $\lim_\alpha \lambda^\alpha_0=1$. In addition, $t_\alpha:=s_\alpha/(1-\lambda^\alpha_0) = \sum_{i=1}^{n_\alpha} \lambda^\alpha_i /(1-\lambda^\alpha_0)s^\alpha_i$ and $t_\alpha \in \Con(\mathcal{SM}(M))$. Moreover, the net $\{t_\alpha\}_\alpha$ converges weakly to $s$ on $M$, so that $s \in (\Con(\mathcal{SM}(M)))^{-_M}$ and finally, $\mathcal S(M)= (\Con(\mathcal{SM}(M)))^{-_M}$.
\end{proof}

\begin{proposition}\label{pr:fin}
Let $M$ be an EMV-algebra. If $\mathcal S(M)=\mathcal{SM}(M)$, then $\mathcal S(M)$ and $\mathcal{SM}(M)$ both are either empty sets or singletons and, in addition, $M$ is an MV-algebra.
\end{proposition}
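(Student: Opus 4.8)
The plan is to run everything through three facts already in hand: convex combinations of states are states (Proposition~\ref{pr:state3}(vi)), the extremal states are precisely the state-morphisms (Theorem~\ref{th:state8}), and compactness of $\mathcal{SM}(M)$ is equivalent to the existence of a top element (Proposition~\ref{pr:state9}).

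First I would settle the trivial case $M=\{0\}$: here $\mathcal S(M)=\mathcal{SM}(M)=\emptyset$ and $M$ is a (degenerate) MV-algebra, so the statement holds. From now on assume $M\ne\{0\}$; then $M$ admits at least one state-morphism, so the common set $\mathcal S(M)=\mathcal{SM}(M)$ is non-empty, and the task reduces to proving that this set is a singleton and that $M$ has a top element.

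For the singleton claim I would argue by contradiction. Suppose $s_1,s_2\in\mathcal{SM}(M)$ with $s_1\ne s_2$. Both are states, so by Proposition~\ref{pr:state3}(vi) the midpoint $s=\tfrac12 s_1+\tfrac12 s_2$ is again a state; the hypothesis $\mathcal S(M)=\mathcal{SM}(M)$ then forces $s$ to be a state-morphism, hence an extremal state by Theorem~\ref{th:state8}. Applying extremality to the representation $s=\tfrac12 s_1+\tfrac12 s_2$ with coefficient $\tfrac12\in(0,1)$ gives $s=s_1=s_2$, contradicting $s_1\ne s_2$. Thus $\mathcal{SM}(M)$, and with it $\mathcal S(M)$, consists of a single point.

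Finally, to deduce that $M$ is an MV-algebra I would observe that a one-point set is compact in the (Hausdorff) weak topology, so $\mathcal{SM}(M)$ is compact; Proposition~\ref{pr:state9} then yields a top element of $M$, and an EMV-algebra with a top element is an MV-algebra. I do not expect a genuine obstacle here: the whole argument is the standard convexity-versus-extremality dichotomy, and the only step that must not be overlooked is that a singleton is automatically compact, which is exactly what lets Proposition~\ref{pr:state9} supply the top element.
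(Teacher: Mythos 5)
Your proof is correct. The first half (empty case, and the midpoint-of-two-distinct-state-morphisms contradiction via Proposition~\ref{pr:state3}(vi) and Theorem~\ref{th:state8}) is exactly the argument the paper leaves implicit when it writes that $\partial\mathcal S(M)=\mathcal{SM}(M)$ forces both sets to be singletons. Where you genuinely diverge is the final step. The paper concludes that $M$ is an MV-algebra by a purely algebraic route: a single state-morphism means, via Theorem~\ref{th:state6}, a unique maximal ideal, and then it invokes an external result (\cite[Thm 3.25]{DvZa}) saying an EMV-algebra with a unique maximal ideal is an MV-algebra. You instead observe that a singleton is compact in the weak topology and apply the implication (iii)~$\Rightarrow$~(i) of Proposition~\ref{pr:state9} to produce a top element. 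Both are valid; your version has the advantage of staying entirely inside the results proved or stated in this paper, at the cost of routing an essentially algebraic fact through a topological characterization, while the paper's version isolates the cleaner structural reason (uniqueness of the maximal ideal) but leans on a citation to the companion paper.
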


\begin{proof}
If $\mathcal S(M)$ is empty, then $M=\{0\}$ and $M$ is a degenerate (= one-element) MV-algebra. Now let $\mathcal S(M)\ne \emptyset$, then $M\ne \{0\}$. By Theorem \ref{th:state8}, $\partial \mathcal S(M) = \mathcal{SM}(M)$, so that both sets $\mathcal S(M)$ and $\mathcal{SM}(M)$ are singletons. Due to Theorem \ref{th:state6}, $M$ has a unique maximal ideal which by \cite[Thm 3.25]{DvZa} means that $M$ is an MV-algebra.
\end{proof}

Now we describe those states on $N$ whose restrictions to $M$ are not states on $M$.

\begin{proposition}\label{pr:state13}
Let $M$ be an EMV-algebra without top element. Then the restriction of a state $s \in \mathcal S(N)$ to $M$ is a state on $M$ if and only if $s \in \phi(\mathcal S(M))$, where $\phi:\mathcal S(M)\to \mathcal S(N)$ is given by $\phi(s)=\tilde s$, which is defined by {\rm (\ref{eq:state})}.

In particular, there is uncountable many states on $N$ whose restriction to $M$ is not a state on $M$.
\end{proposition}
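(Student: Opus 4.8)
The plan is to test the obvious candidate and check it works in both directions. For the backward implication, suppose $s=\tilde t=\phi(t)$ for some $t\in\mathcal S(M)$. By the very definition (\ref{eq:state}) we have $\tilde t(x)=t(x)$ for every $x\in M$, so the restriction $s|_M$ coincides with $t$ and is therefore a state on $M$; this direction is immediate and needs no further work.

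For the forward implication, suppose $s\in\mathcal S(N)$ and that $t:=s|_M$ is a state on $M$. I would show $s=\tilde t$, which gives $s=\phi(t)\in\phi(\mathcal S(M))$. For $x\in M$ this is just $\tilde t(x)=t(x)=s(x)$. For $x\in N\setminus M$ write $x=x_0^*$ with $x_0\in M$ (Theorem \ref{th:embed}); since $s$ is a state on the MV-algebra $N$ with top element $1$, one has $s(x_0^*)=s(1)-s(x_0)=1-s(x_0)=\tilde t(x)$, so indeed $s=\tilde t$.

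What makes the statement non-vacuous is pinning down exactly when $s|_M$ fails to be a state, and this is the step I would treat most carefully. The key observation is that additivity of $s|_M$ with respect to the partial operation $+$ on $M$ is automatic: because $M$ is (the image of) a maximal ideal of $N$, its EMV-operations, and in particular the partial sum of Proposition \ref{pr:state1}, are restrictions of those of $N$. Concretely, if $x\odot_M y=0$ with $x,y\le a\in\mathcal I(M)$, then passing to $N=\Gamma(G,u)$ one sees $x+y\le a\le u$, whence $x\odot_N y=0$ and $x+_M y=x\oplus_M y=x\oplus_N y=x+_N y$; additivity of $s$ on $N$ then forces $s(x+y)=s(x)+s(y)$. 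Thus the only possible obstruction to $s|_M$ being a state is normalization, and we obtain the working criterion: $s|_M\in\mathcal S(M)$ if and only if there is $a\in M$ with $s(a)=1$.

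Finally, to produce uncountably many states on $N$ whose restriction to $M$ is not a state, I would fix any state-morphism $t\in\mathcal{SM}(M)$ and recall the state-morphism $s_\infty$ on $N$ from Proposition \ref{pr:state11}, which vanishes on all of $M$. For $\lambda\in[0,1)$ set $s_\lambda:=\lambda\tilde t+(1-\lambda)s_\infty$, a state on $N$ by Proposition \ref{pr:state3}(vi). For every $a\in M$ we have $s_\lambda(a)=\lambda t(a)\le\lambda<1$, so no element of $M$ is sent to $1$ and, by the criterion above, $s_\lambda|_M\notin\mathcal S(M)$. Choosing an idempotent $a_0\in\mathcal I(M)$ with $t(a_0)=1$ gives $s_\lambda(a_0)=\lambda$, so the $s_\lambda$ are pairwise distinct as $\lambda$ ranges over $[0,1)$, yielding uncountably many such states. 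The hard part will be the automatic-additivity step; everything else is bookkeeping with (\ref{eq:state}) and the MV-algebra identity $s(x_0^*)=1-s(x_0)$.
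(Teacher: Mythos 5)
Your proposal is correct and follows essentially the same route as the paper: both directions reduce to checking $s=\tilde t$ via (\ref{eq:state}) and the MV-identity $s(x_0^*)=1-s(x_0)$, and the uncountable family is exactly the paper's $s_\lambda=\lambda\tilde t+(1-\lambda)s_\infty$ construction. Your only addition is the explicit verification that additivity of $s|_M$ is automatic (so normalization is the sole obstruction), a point the paper's terser proof leaves implicit.
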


\begin{proof}
If $s \in \phi(\mathcal S(M))$, then there is a state $s_0$ on $M$ such that $s=\tilde s_0$. Therefore, $s|_M=s_0$ is a state on $M$. Now let $s_0:=s|_M \in \mathcal S(M)$, then clearly $\phi(s_0)=s$.

Let $s$ be a state-morphism on $N$ different of $s_\infty$ onto $M$, $\lambda \in (0,1)$, and let $s_\lambda = \lambda s +(1-\lambda)s_\infty$. Then for the restriction of $s_\lambda$ onto $M$, we have $s_\lambda|_M=\lambda s|_M$ which is not a state on $M$, and the system $\{s_\lambda \colon \lambda \in (0,1)\}$ gives an uncountable system of mutually different states on $M$ whose restriction to $M$ is not a state on $M$.
\end{proof}

Given an element $x \in M$, we set $0x=0$, $1x= x$ and $nx=(n-1)x +x$, $n \ge 2$, if $(n-1)x$ and $(n-1)x+x$ are defined in $M$.
We say that an element $x \in M$ is said to be an {\it infinitesimal} if the element $nx$ is defined in $M$ for each integer $n\ge 1$. We denote by $\Infinit(M)$ the set of all infinitesimal elements of $M$. %We define $\Ker(M)= \bigcap \{I \colon I \in \mathrm{MaxI}(M)\}$.

\begin{proposition}\label{pr:state14}
Let $M$ be an EMV-algebra. Then $\Infinit(M)$ is a proper ideal of $M$ and
$$\Infinit(M)=\Rad(M) =\{x\in M \colon s(x) =0 \text{ for each } s\in \mathcal{SM}(M)\}.
$$
\end{proposition}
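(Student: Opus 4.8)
The plan is to separate the two asserted equalities. The identity $\Rad(M)=\{x\in M\colon s(x)=0 \text{ for all } s\in\mathcal{SM}(M)\}$ is essentially a restatement of the correspondence between maximal ideals and state-morphisms: by Proposition \ref{pr:state5} the kernels of state-morphisms are exactly the maximal ideals of $M$, and by Theorem \ref{th:state6} each maximal ideal is the kernel of a unique state-morphism, so $\Rad(M)=\bigcap_{s\in\mathcal{SM}(M)}\Ker(s)$, which is the set on the right. Everything thus reduces to $\Infinit(M)=\Rad(M)$. First I would record that $\Infinit(M)$ is an ideal: if $x\le a\in\mathcal I(M)$ and $x$ is infinitesimal, then each partial multiple $nx$ is formed inside the MV-algebra $[0,a]$ and stays below $a$, so $\Infinit(M)\cap[0,a]=\Rad([0,a])$, the MV-radical of $[0,a]$; downward closure and closure under $\oplus$ then follow from the corresponding properties of $\Rad([0,a])$, using that $\Infinit([0,a])=\Rad([0,a])$ by \cite{CDM}.

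The inclusion $\Infinit(M)\subseteq\{x\colon s(x)=0\ \forall s\}$ is immediate: if $nx$ is defined for all $n$ and $s$ is a state-morphism, then additivity of $s$ on $+$ gives $s(nx)=n\,s(x)\le 1$ for every $n$, forcing $s(x)=0$. In particular $\Infinit(M)$ is contained in every $\Ker(s)$, hence is a proper ideal whenever $M\ne\{0\}$.

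For the reverse direction I pass to the representing MV-algebra $N$ of Theorem \ref{th:embed}. If $M$ has a top element then $N=M$ and $\Infinit(M)=\Rad(M)$ is the classical MV-algebra fact, so assume $M$ has no top element and is a maximal ideal of $N\cong\Gamma(G,u)$. The heart of the matter is $\Infinit(M)=\Infinit(N)$. If $nx$ is defined in $M$ for all $n$, then at each stage $x\odot x=0$ computed in $M$ keeps the partial sums below $a\le u$, so the same multiples are defined in $N$ and $x\in\Infinit(N)$. Conversely let $x\in\Infinit(N)=\Rad(N)$; since $\Rad(N)$ is contained in every maximal ideal, in particular $\Rad(N)\subseteq M$, so there is an idempotent $a\in\mathcal I(M)$ with $x\le a$, and as the embedding preserves $\oplus$, $a$ is an idempotent (Boolean) element of $N$. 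Infinitesimality gives that the $n$-fold group-sum of $x$ in $G$ is $\le u$ for all $n$, while $x\le a$ gives $nx\le na$ in $G$; idempotency $a\oplus a=a$ in $N$ means $na\wedge u=a$, whence $nx\le na\wedge u=a$ for all $n$, i.e. $x\in\Infinit([0,a])\subseteq\Infinit(M)$. Hence $\Infinit(M)=\Infinit(N)=\Rad(N)$.

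It remains to identify $\Rad(N)$ with the target set. By Proposition \ref{pr:state11}, $\mathcal{SM}(N)=\{\tilde s\colon s\in\mathcal{SM}(M)\}\cup\{s_\infty\}$, and for $x\in M$ we have $\tilde s(x)=s(x)$ and $s_\infty(x)=0$; since $\Rad(N)\subseteq M$, it follows that $x\in\Rad(N)=\bigcap_{t\in\mathcal{SM}(N)}\Ker(t)$ iff $s(x)=0$ for every $s\in\mathcal{SM}(M)$. Combined with the first paragraph this gives $\Infinit(M)=\Rad(N)=\{x\colon s(x)=0\ \forall s\in\mathcal{SM}(M)\}=\Rad(M)$, the claim. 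I expect the main obstacle to be the inclusion $\Infinit(N)\subseteq\Infinit(M)$: one must guarantee that an infinitesimal of the ambient $N$ lying under an idempotent $a$ has all of its multiples bounded by $a$, which is exactly where the identity $na\wedge u=a$ enters; the degenerate algebra $M=\{0\}$, having no maximal ideals and no states, must be set aside when asserting properness.
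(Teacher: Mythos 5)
Your proof is correct, and for the easy parts (the identification $\Rad(M)=\bigcap_{s\in\mathcal{SM}(M)}\Ker(s)$ via Proposition \ref{pr:state5} and Theorem \ref{th:state6}, and the inclusion $\Infinit(M)\subseteq\Rad(M)$ via $ns(x)\le 1$) it coincides with the paper. For the substantive inclusion $\Rad(M)\subseteq\Infinit(M)$, however, you take a genuinely different route. The paper argues entirely inside $M$: given $x>0$ not infinitesimal, it chooses $n$ with $c:=(nx)\odot x>0$, takes an ideal $P$ maximal with respect to $c\notin P$ (prime by \cite[Thm 5.12]{DvZa} and contained in a unique maximal ideal $I=\Ker(s)$), and computes $s(\lambda_b((n+1).x))=0$, hence $(n+1).s(x)=1$ and $s(x)>0$. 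You instead pass to the representing MV-algebra $N$ of Theorem \ref{th:embed}, prove $\Infinit(M)=\Infinit(N)$, quote the classical MV-algebra identity $\Infinit(N)=\Rad(N)$ from \cite{CDM}, and translate back through $\mathcal{SM}(N)=\{\tilde s\colon s\in\mathcal{SM}(M)\}\cup\{s_\infty\}$. Both arguments work; the paper's is self-contained within EMV-theory (in effect it reproves the MV-algebra fact rather than citing it), while yours is shorter modulo the cited result and reuses the reduction-to-$N$ technique employed elsewhere in the paper. Two points in your version deserve an explicit word: (a) the identity $na\wedge u=a$ for an idempotent $a$ --- from $a\oplus a=a$ one gets only $2a\wedge u=a$ directly, and the general $n$ requires an induction using the Riesz inequality $(g+h)\wedge k\le(g\wedge k)+(h\wedge k)$ for positive elements of the $\ell$-group; and (b) the fact that the partial multiples $nx$ computed in $M$ and in $N$ agree, which holds because the embedding preserves $\odot$, cf.\ (\ref{eq:x<y}). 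Your explicit exclusion of $M=\{0\}$ when asserting properness is a reasonable (indeed slightly more careful) reading of the statement.
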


\begin{proof}
By Theorem \ref{th:state6}, $\Rad(M)=\{x\in M \colon s(x) =0 \text{ for each } s\in \mathcal{SM}(M)\}$.
Let $x \in \Infinit(M)$, then $s(nx)=ns(x)\le 1$ and $s(x)\le 1/n$ for each $n\ge 1$, so that $s(x)=0$ for each state-morphism $s$ on $M$, i.e. $\Infinit(M) \subseteq \Rad(M)$.

Now let $x>0$ be not infinitesimal, and let $x \le a\in \mathcal I(M)$. There is an integer $n$ such that $nx\not\le \lambda_a(x)$. Using (\ref{eq:x<y}), we have $c:= (nx) \odot \lambda_a((nx)\wedge \lambda_a(x))= (nx)\odot \lambda_a(\lambda_a(x))= (nx)\odot x >0$. There is an ideal $P$ which is maximal under the condition $c\notin P$. By \cite[Thm 5.12]{DvZa}, $P$ is prime and is contained in a unique maximal ideal $I$ of $M$, see \cite[Prop 5.9]{DvZa}. There is a unique state-morphism $s$ on $M$ such that $I=\Ker(s)$, in addition, there is an idempotent $b\in \mathcal I(M)$ such that $x\le a \le b$ and $s(b)=1$. Then $nx\not\le \lambda_b(x)$, otherwise, $(nx)\odot x=0$. Therefore, $c=(nx)\odot x = (nx) \odot \lambda_b(nx\wedge \lambda_b(x)) = (nx)\odot \lambda_b(\lambda_b(x))$. Then $\big((nx) \odot \lambda_b(\lambda_b(x))\big)\wedge \big(\lambda_b(x)\odot \lambda_b(nx)\big)=0$ which implies  $\lambda_b(x)\odot \lambda_b(nx) = \lambda_b(x\oplus nx)=\lambda_b((n+1).x) \in P\subseteq I$. Then $s(\lambda_b((n+1).x))= s(b)- s((n+1).x)=0$ and $1=s(b)= s((n+1).x)=(n+1).s(x)$ which entails $s(x)>0$, $x \notin I$, and $x \notin \Rad(M)$. Therefore, $\Rad(M) \subseteq \Infinit(M)$, which gives the result $\Infinit(M)=\Rad(M)$. Since clearly $\Rad(M)$ is an ideal so is $\Infinit(M)$ and $\Infinit(M)$ is a proper ideal of $M$.
\end{proof}

\begin{proposition}\label{pr:state15}  Let $M$ be an EMV-algebra. The state spaces of $M$ and $M/\Rad(M)$ are affinely homeomorphic.
\end{proposition}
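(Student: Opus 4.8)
The plan is to use the canonical quotient EMV-homomorphism $\pi\colon M\to M/\Rad(M)$, $\pi(x)=x/\Rad(M)=:[x]$, which is available because $\Rad(M)$ is a proper ideal of $M$ by Proposition~\ref{pr:state14}, and to exhibit the induced maps on state spaces as mutually inverse affine homeomorphisms. The pivotal preliminary observation is that $\Rad(M)\subseteq\Ker(s)$ for \emph{every} $s\in\mathcal S(M)$: by Proposition~\ref{pr:state14} we have $\Rad(M)=\Infinit(M)$, so if $x\in\Rad(M)$ then $nx$ is defined for all $n\ge 1$ and additivity yields $n\,s(x)=s(nx)\le 1$, forcing $s(x)=0$. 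I would then define $\Phi\colon\mathcal S(M/\Rad(M))\to\mathcal S(M)$ by $\Phi(t)=t\circ\pi$ and $\Psi\colon\mathcal S(M)\to\mathcal S(M/\Rad(M))$ by $\Psi(s)=\bar s$, where $\bar s([x]):=s(x)$, and prove that $\Psi=\Phi^{-1}$.

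The map $\Phi$ is the easy direction: since $\pi$ is a surjective EMV-homomorphism it preserves $\odot$ and hence the partial operation $+$, so $t\circ\pi$ is additive, and choosing $a\in M$ with $t([a])=1$ gives $(t\circ\pi)(a)=1$; thus $\Phi(t)\in\mathcal S(M)$. The real work is to see that $\bar s$ is well defined, i.e. that $s$ is constant on the congruence classes modulo $\Rad(M)$. Suppose $[x]=[y]$ and fix an idempotent $a$ with $x,y\le a$; the congruence attached to the ideal $\Rad(M)$ says that the distance term $d:=(x\odot\lambda_a(y))\oplus(y\odot\lambda_a(x))$ belongs to $\Rad(M)\subseteq\Ker(s)$, so $s(d)=0$. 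In the MV-algebra $[0,a]$ the summands $x\odot\lambda_a(y)$ and $y\odot\lambda_a(x)$ are orthogonal, since the first lies below $\lambda_a(y)$ and the second below $y$; hence $d=(x\odot\lambda_a(y))+(y\odot\lambda_a(x))$ and additivity forces $s(x\odot\lambda_a(y))=0=s(y\odot\lambda_a(x))$. Using $x\odot\lambda_a(y)=x\odot\lambda_a(x\wedge y)$ from (\ref{eq:x<y}) together with Proposition~\ref{pr:state3}(ii), I obtain $s(x)-s(x\wedge y)=s(x\odot\lambda_a(x\wedge y))=0$, and symmetrically $s(y)=s(x\wedge y)$; therefore $s(x)=s(y)$ and $\bar s$ is unambiguous.

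That $\bar s$ is a state follows from Proposition~\ref{pr:state3}(iv): if $[x]+[y]$ is defined then $x\odot y\in\Rad(M)\subseteq\Ker(s)$, so $\bar s([x]+[y])=s(x\oplus y)=s(x)+s(y)-s(x\odot y)=\bar s([x])+\bar s([y])$, while $\bar s([a])=s(a)=1$ for some $a$. The identities $\Psi(\Phi(t))=t$ and $\Phi(\Psi(s))=s$ are immediate from $\bar s\circ\pi=s$ and $\overline{t\circ\pi}=t$, so $\Phi$ and $\Psi$ are mutually inverse bijections. Affineness of $\Phi$ is clear from $\Phi(\lambda t_1+(1-\lambda)t_2)=\lambda\Phi(t_1)+(1-\lambda)\Phi(t_2)$, and the inverse of an affine bijection is affine. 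Since the weak topology on each side is the topology of pointwise evaluation, continuity in both directions is formal: $t_\alpha\stackrel{w}\rightarrow t$ gives $\Phi(t_\alpha)(x)=t_\alpha([x])\to t([x])=\Phi(t)(x)$ for every $x\in M$, and symmetrically for $\Psi$. Hence $\Phi$ is an affine homeomorphism of $\mathcal S(M/\Rad(M))$ onto $\mathcal S(M)$.

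I expect the only genuine obstacle to be the well-definedness of $\bar s$, which rests on identifying the congruence induced by $\Rad(M)$ and on the orthogonality of the two distance components; all remaining steps are routine consequences of $\Rad(M)\subseteq\Ker(s)$ and the additivity identities collected in Proposition~\ref{pr:state3}.
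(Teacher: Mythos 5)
Your proposal is correct, and the overall architecture (the two mutually inverse maps $t\mapsto t\circ\pi$ and $s\mapsto\bar s$, plus the formal verification of affineness and bicontinuity in the pointwise topology) matches the paper's. The one place where you genuinely diverge is the crux, namely the well-definedness of $\bar s$. The paper argues as follows: if $[x]=[y]$ then $s'(x)=s'(x\wedge y)=s'(y)$ for every \emph{state-morphism} $s'$ (since $\Rad(M)$ is the intersection of their kernels), and then it invokes the Krein--Mil'man representation, Theorem \ref{th:state12}, to transfer the equality $s(x)=s(x\wedge y)=s(y)$ to an arbitrary state $s$ as a weak limit of convex combinations of state-morphisms. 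You instead prove the stronger pointwise fact $\Rad(M)\subseteq\Ker(s)$ for every state $s$ directly, via Proposition \ref{pr:state14}: $\Rad(M)=\Infinit(M)$, and $n\,s(x)=s(nx)\le 1$ kills every infinitesimal. Combined with the orthogonal decomposition of the distance term $d=(x\odot\lambda_a(y))+(y\odot\lambda_a(x))$, the identity (\ref{eq:x<y}), and Proposition \ref{pr:state3}(ii), this yields $s(x)=s(x\wedge y)=s(y)$ without any appeal to Theorem \ref{th:state12}. Your route is the more elementary one: it trades the topological Krein--Mil'man machinery for the (already established, but more combinatorial) hard direction of $\Rad(M)=\Infinit(M)$, and it makes explicit the additivity step $s(x\oplus y)=s(x)+s(y)-s(x\odot y)$ with $x\odot y\in\Rad(M)$, which the paper only gestures at by referring back to the proof of Proposition \ref{pr:state3}(vii). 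The only mild debt you incur is the identification of the congruence attached to the ideal $\Rad(M)$ with the distance-term condition, which is standard MV/EMV ideal theory from \cite{DvZa} and is implicitly used by the paper as well whenever it forms quotients.
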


\begin{proof}
For any $x \in M$, let $[x]:=x/\Rad(M)$. Let $s$ be a state on $M$.
Then the mapping $\hat s$ on $M/\Rad(M)$ defined by $\hat
s([x])=s(x)$ ($x\in M$) is a state on $M/\Rad(M)$. Indeed, assume $[x]=[y]$,
then $s'(x)=s'(x\wedge y)=s'(y)$ for any state-morphism $s'$ on $M$ so
that by Theorem \ref{th:state12}, $s(x)=s(x\wedge y)=s(y)$ for any state $s$ on $M$. Then $\hat s$ is well defined, and $\hat s([x])=1$ whenever $s(x)=1$.  Assume $[x]+ [y]$ is defined in $M/\Rad(M)$, and let $x,y \le a\in \mathcal I(M)$ with $s(a)=1$. To prove $\hat s([x]+[y])=\hat s([x])+\hat s([y])$, we use the same steps as those in the proof of (vii) of Proposition \ref{pr:state3}. Consequently, $\hat s$ is a state on $M/\Ker(M)$.

From the characterization of extremal states on EMV-algebras, Theorem \ref{th:state6} and Theorem \ref{th:state8}, we see that if  $s$ is extremal so is $\hat s$.

Conversely, if $\mu$ is a state on $M/\Rad(M)$, then the mapping
$s_\mu(x)=\mu([x])$ for $x\in M$ is a state on $M$, and if $\mu$ is extremal so is $s_\mu$.  Moreover, $\widehat{s_\mu}=\mu$.

The mapping $s\mapsto \hat s$ is therefore injective, surjective, continuous, open, and affine.
\end{proof}

\section{Pre-states}%5

Besides states on EMV-algebras we define pre-states, strong pre-states, and pre-state-morphisms. They are of a weaker form than states state-morphisms and they are important mainly when an EMV-algebra has no top element, but in such a case, they can be extended to states on the representing MV-algebra.

Let $M$ be an EMV-algebra. We say that a mapping $s:M \to[0,1]$ is (i) a {\it pre-state} if $s(x+y)=s(x)+s(y)$ whenever $x+y$ is defined in $M$, and (ii) a {\it pre-state-morphism} if $s(x\oplus y)=s(x)\oplus s(y)$, $x,y \in M$, where $u\oplus v:=\min\{u+v,1\}$ for all $u,v \in [0,1]$. Properties: (i) $s(0)=0$, and (ii) if $x\le y$, from $y=x+(y\odot \lambda_a(x))=x\oplus (y\odot \lambda_a(x))$, where $x,y\le a \in \mathcal I(M)$, we conclude that $s(x)\le s(y)$ and $s(y\odot \lambda_a(x))=s(y)-s(x)$.

We denote by $\mathcal{PS}(M)$ and $\mathcal{PSM}(M)$ the set of pre-states and pre-state-morphisms, respectively, on $M$. For example, if $M$ has no top element and $N$ is its representing MV-algebra, then the restriction of any state on $N$ onto $M$ is a pre-state on $M$. The restriction of $s_\infty$ onto $M$ is the zero function on $M$. It is clear, that the set $\mathcal{PS}(M)$ is a convex set; we note that extremal pre-states are defined in the same way as do extremal states and they are described in Theorem \ref{th:int2} below.

A pre-state $s$ on $M$ is said to be a {\it strong pre-state} if there is an element $x_0\in M$ such that $s(x_0)=\sup\{s(x)\colon x \in M\}$. Then every state on $M$ is a strong pre-state and if $M$ is a $\sigma$-complete EMV-algebra, i.e. every sequence of elements in $M$ has a supremum, then every pre-state on $M$ is strong. Indeed, let $r=\sup\{s(x) \colon x \in M\}$. There is a sequence $\{x_n\}$ of elements of $M$ such that $x_1\le x_2\le \cdots$ and $r=\lim_n s(x_n)$. Put $x_0=\bigvee_n x_n$, then $r\ge s(x_0)\ge s(x_n)$ so that $r=s(x_0)$. If $M$ has a top element, then every pre-state is strong, and if $s_0$ is a pre-state, there is a state $s$ on $M$ and a number $r \in [0,1]$ such that $s_0=rs$. If $s_0$ is non-zero, there is a unique state $s$ and unique number $r\in (0,1]$ such that $s_0=rs$.

Below, see Theorem \ref{th:int2}, it will be proved that every pre-state-morphism is a strong pre-state, more precisely, we show that every pre-state-morphism on $M$ is either the zero function or a state-morphism.

We denote by $\mathcal{PS}_s(M)$ the set of strong pre-states on $M$. The sets $\mathcal{PS}(M)$ and $\mathcal{PS}_s(M)$ are convex sets containing $\mathcal{PSM}(M)$. The restriction of any convex combination of state-morphisms on $N$ onto $M$ is a state on $M$. On the other hand if $s$ is a state-morphism on $N$ different of $s_\infty$, then the restriction of $s_\lambda:=\lambda s + (1-\lambda) s_\infty$ onto $M$, where $\lambda \in (0,1)$, is a strong pre-state on $M$ such that the maximal value of the restriction of $s_\lambda$ onto $M$ is $\lambda$. This follows from the fact, see \cite[Prop 4.4]{DvZa1}, that $\mathcal{SM}(N)=\phi(\mathcal{SM}(M))\cup \{s_\infty\}$, where $\phi$ was defined in Proposition \ref{pr:state11}.

We note that the restriction of any state on $N$ onto $M$ is not necessarily a strong pre-state on $M$ as it follows from the following example.

\begin{example}\label{ex:contra}
Let $\mathcal T$ be an $EMV$-algebra from Example {\rm \ref{ex:state9}}. Then $\mathcal T$ is an EMV-algebra without top element and $\mathcal T$ is not $\sigma$-complete. Its representing MV-algebra is $\mathcal N:=\{A \subseteq \mathbb N \colon \text{ either } A \text{ is finite or } \mathbb N\setminus A \text{ is finite}\}$. Then $\mathcal{SM}(\mathcal T)= \{s_n\colon n \in \mathbb N\}$, where $s_n(A)=\chi_A(n)$, $A \in \mathcal T$, and $\mathcal{SM}(\mathcal N)=\{\tilde s_n \colon n \in \mathbb N\}\cup\{s_\infty\}$. Take a state $s= \lambda_0 s_\infty +\sum_{n=1}^\infty \lambda_n\tilde s_n$, where each $\lambda$ is from $(0,1)$ and $\sum_{n=0}^\infty \lambda_n =1$. Then the restriction of $s$ onto $\mathcal T$ is the function $s_0=\sum_{n=1}^\infty \lambda_n s_n$ which is a pre-state but not a strong pre-state on $\mathcal T$ because if  $A_k=\{1,\ldots,k\}$ for each $k\ge 1$, we have, given $A \in \mathcal T$, there is an integer $k\ge 1$ such that $A \subseteq A_k$ which gives $s_0(A)\le s_0(A_k)= \sum_{i=1}^k\lambda_i<\sum_{i=1}^\infty \lambda_i = \sup\{s_0(A)\colon A \in \mathcal T\}$.
\end{example}

Further properties of states and pre-states on $\mathcal T$ are presented in Examples \ref{ex:integ3}--\ref{ex:integ4}, Theorem \ref{th:integ5}, and Corollary \ref{co:integ6}.

If $s$ is a state on $M$ and $r$ is a real number $r\in[0,1)$, then $s_r(x) =rs(x)$, $x \in M$, is a strong pre-state which is not a state. In particular, the zero function $s_0$ on $M$ is both a pre-state and a pre-state-morphism as well. Clearly, if $s$ is a strong pre-state not vanishing on $M$ and $r=\sup\{s(x)\colon x \in M\}$, then $s_r:=\frac{1}{r}s$ is a state on $M$.

For pre-states on an EMV-algebra, we define the weak topology in a standard way. Then $\mathcal{PS}(M)$ is a compact set in the weak topology.

%For pre-states we define the weak topology in a standard way. Pre-states on EMV-algebras without top element play important role for description of states on the representing MV-algebra $N$ in which $M$ is a maximal ideal as the following result says. If $M$ is $\sigma$-complete, i.e. every sequence of elements of $M$ has the least upper bound in $M$, then if $s$ is a pre-state on $M$ that is not identically zero, then $\frac{1}{r} s$ is a state on $M$. Indeed, let $r=\sup\{s(x) \colon x \in M\}$, then $r>0$. There is a sequence $\{x_n\}$ of elements of $M$ such that $r = \sup_n s(x_n)$. Let $x= \bigvee_n x_n$, then $r\ge s(x)\ge s(x_n)$ for each $n$, so that $r=s(x)$, and therefore, $\frac{1}{r}s$ is a state on $M$.

\begin{proposition}\label{pr:int0}
Let $M$ be an EMV-algebra without top element and $N$ be its representing MV-algebra. Every pre-state on $M$ is a restriction of a unique state on $N$. That is, if $s$ is a pre-state on $M$, the mapping $\tilde s: N\to [0,1]$ defined by
\begin{equation}\label{eq:state0}
\tilde s(x)=\begin{cases}
s(x) & \text{ if } x\in M,\\
1-s(x_0) & \text{ if }  x=x^*_0,\ x_0 \in M,
\end{cases} \quad x \in N,
\end{equation}
is a unique state on $N$ whose restriction to $M$ coincides with $s$. Moreover, the spaces $\mathcal{PS}(M)$ and $\mathcal S(N)$ are affinely homeomorphic in the weak topologies.
\end{proposition}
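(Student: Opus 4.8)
The plan is to verify that the formula \eqref{eq:state0} produces a genuine state on $N$ restricting to $s$, that this state is the only one doing so, and then to read off the affine homeomorphism from the mutually inverse maps $s\mapsto\tilde s$ and $t\mapsto t|_M$. First I would check that $\tilde s$ is well defined. By the Basic Representation Theorem (Theorem~\ref{th:embed}), every $x\in N$ either lies in $M$ or equals $x_0^*$ for some $x_0\in M$, and these alternatives are mutually exclusive: if $x\in M$ and also $x=x_0^*$ with $x_0\in M$, then $x_0=x^*\in M$, whence $1=x\oplus x^*\in M$ because the maximal ideal $M$ is closed under $\oplus$, contradicting that $M$ has no top element. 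Thus the two clauses of \eqref{eq:state0} never overlap, $\tilde s$ is well defined, it restricts to $s$ on $M$, and $\tilde s(1)=1-s(0)=1$.

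The central step is to show $\tilde s(x\oplus y)=\tilde s(x)+\tilde s(y)$ whenever $x\odot y=0$ in $N$, and here I would reproduce, case by case, the computation already carried out in the proof of Proposition~\ref{pr:state11}. The case $x,y\in M$ uses only that $s$ is additive on disjoint pairs; the case $x=x_0^*$, $y=y_0^*$ is impossible, since $x\odot y=0$ would force $1\in M$; and the decisive case $x\in M$, $y=y_0^*$ with $x\le y_0$ reduces, via $x\oplus y_0^*=(y_0\odot\lambda_a(x))^*$ for an idempotent $a\in\mathcal I(M)$ with $x\le y_0\le a$, to the identity $s(y_0\odot\lambda_a(x))=s(y_0)-s(x)$. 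The main point to stress — and the place where the present statement genuinely differs from Proposition~\ref{pr:state11} — is that this identity does not require $s$ to attain the value $1$ anywhere: it is precisely the monotonicity property (ii) recorded just after the definition of pre-states, the idempotent bound $a$ exists by axiom (iv), and the value is independent of the choice of $a$ by \eqref{eq:x<y}. Hence the whole computation goes through using only the pre-state axioms, giving $\tilde s\in\mathcal S(N)$. I expect this verification — convincing oneself that normalization of $s$ on $M$ was never essential — to be the only real obstacle.

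For uniqueness, any state $t$ on $N$ with $t|_M=s$ must satisfy $t(x_0^*)=t(\lambda_1(x_0))=1-t(x_0)=1-s(x_0)$ for $x_0\in M$, so $t=\tilde s$. Conversely, for an arbitrary state $t$ on $N$ the restriction $t|_M$ is a pre-state on $M$, since additivity on disjoint pairs is inherited, and $\widetilde{t|_M}=t$ by the same complementation identity; thus $s\mapsto\tilde s$ and $t\mapsto t|_M$ are mutually inverse bijections between $\mathcal{PS}(M)$ and $\mathcal S(N)$.

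Finally I would check affinity and continuity. Affinity is a direct computation: on $M$ the map is the identity, while on the complementary elements $\widetilde{\lambda s_1+(1-\lambda)s_2}(x_0^*)=1-\lambda s_1(x_0)-(1-\lambda)s_2(x_0)=\lambda\tilde s_1(x_0^*)+(1-\lambda)\tilde s_2(x_0^*)$. For the topology, if $s_\alpha\to s$ pointwise on $M$ then $\tilde s_\alpha\to\tilde s$ pointwise on $N$, since on complementary elements $\tilde s_\alpha(x_0^*)=1-s_\alpha(x_0)\to 1-s(x_0)$; symmetrically $t\mapsto t|_M$ is continuous. Since $\mathcal{PS}(M)$ is compact in the weak topology and $\mathcal S(N)$ is Hausdorff, a continuous bijection is automatically a homeomorphism, so $\mathcal{PS}(M)$ and $\mathcal S(N)$ are affinely homeomorphic.
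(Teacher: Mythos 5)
Your proposal is correct and follows essentially the same route as the paper: define $\tilde s$ by \eqref{eq:state0}, verify additivity by the same three-case analysis as in Proposition \ref{pr:state11} (with the key observation, which you rightly emphasize, that the identity $s(y_0\odot\lambda_a(x_0))=s(y_0)-s(x_0)$ needs only the pre-state axioms and not normalization), obtain uniqueness from $t(x_0^*)=1-t(x_0)$, and read off the affine homeomorphism from the mutually inverse maps $s\mapsto\tilde s$ and $t\mapsto t|_M$ together with the equivalence of weak convergence on $M$ and on $N$. Your added checks (non-overlap of the two clauses of \eqref{eq:state0}, the explicit affinity computation, and the alternative compactness-plus-Hausdorff argument for bicontinuity) are harmless refinements of details the paper leaves implicit.
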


\begin{proof}
Let $s$ be a pre-state on $M$. Let us define a mapping $\tilde s: N\to [0,1]$ by (\ref{eq:state0}).  We assert $\tilde s$ is a state on $N$ whose restriction to $M$ is $s$. Indeed, $\tilde s(1)=1$, and similarly as in the proof of Proposition \ref{pr:state11},  for $x,y \in N$ with $x\odot y=0$, we have three cases: (i) $x=x_0,y=y_0\in M$, (ii) $x= x_0\in M$ and $y=y^*_0$ where $y_0 \in M$, and (iii) $x=x^*_0$, $y=y^*_0$ for some $x_0,y_0 \in M$. The first case is straightforward, and the third one is impossible. Check for (ii) $x\odot y = 0$ implies $u-y_0\le u- x_0$, i.e. $x_0\le y_0$. There is an idempotent $a\in \mathcal I(M)$ such that $x_0\le y_0 \le a$.
Since $x+y=x\oplus y =x_0\oplus y_0^*= (y_0\odot x^*_0)^*= (y_0\odot \lambda_a(x_0))^*$ which yields $\tilde s(x\oplus y)=1-s(y_0\odot \lambda_a(x_0))= 1- s(y_0)+s(x_0)=\tilde s(x)+\tilde s(y)$.

Therefore, $\tilde s$ is a state on $N$ whose restriction to $M$ coincides with $s$. We note that if $s$ is the zero function, then $\tilde s= s_\infty$. From (\ref{eq:state0}) we conclude that $\tilde s$ is a unique state on $N$ whose restriction to $M$ is $s$.

The mapping $\kappa: \mathcal{PS}(M)\to \mathcal S(N)$ given by $\kappa(s)=\tilde s$, $s \in \mathcal{PS}(M)$, is continuous, affine, and since the restriction $\hat s$ of a state $s$ on $N$ to $M$ is a pre-state, we have $\tilde{\hat s}=s$, so that $\kappa$ is invertible. Since a net $\{s_\alpha\}_\alpha$ of pre-states on $M$ converges weakly to a pre-state $s$ on $M$ iff $\{\tilde s_\alpha\}_\alpha$ converges weakly to $\tilde s$ on $N$, we see that $\kappa$ is an affine homeomorphism.
\end{proof}

\begin{proposition}\label{pr:int1}
If $M$ is an EMV-algebra which has no top element, there is another way of extension of strong pre-states on $M$.
\end{proposition}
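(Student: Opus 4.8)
The plan is to exploit the fact that a strong pre-state, unlike a general pre-state, attains its supremum, and hence "lives'' below a single idempotent; this lets us extend it locally, through the MV-algebra $[0,a]$ and the Boolean decomposition it induces on the representing algebra $N$, rather than globally through the complement formula (\ref{eq:state0}) of Proposition \ref{pr:int0}. First I would dispose of the zero pre-state (whose only natural extension is the zero function, equivalently $s_\infty$ on $N$) and assume $s$ is nonzero with $r:=\sup\{s(x)\colon x\in M\}=s(x_0)>0$. Using the EMV-axiom guaranteeing an idempotent above every element, I would pick $a\in\mathcal I(M)$ with $x_0\le a$; monotonicity of pre-states then gives $r=s(x_0)\le s(a)\le r$, so $s(a)=r$. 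The key localization step is the identity $s(x)=s(x\wedge a)$ for all $x\in M$: the modular law $s(x\vee a)+s(x\wedge a)=s(x)+s(a)$ holds for pre-states by the same computation as in Proposition \ref{pr:state3}(iii), and since $a\le x\vee a$ forces $r=s(a)\le s(x\vee a)\le r$, we get $s(x\vee a)=s(a)$ and therefore $s(x)=s(x\wedge a)$.

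Next I would record that the map $\pi_a\colon M\to[0,a]$, $\pi_a(x)=x\wedge a$, is a surjective EMV-homomorphism onto the MV-algebra $([0,a];\oplus,\lambda_a,0,a)$, which has top element $a$. Preservation of $\vee,\wedge,0$ is clear, and preservation of $\oplus$ follows from the centrality of the Boolean element $a$, namely $(x\oplus y)\wedge a=(x\wedge a)\oplus(y\wedge a)$, the same relation used in the proof of Proposition \ref{pr:EMV-GEA}. With this, the localization identity reads $s=r\,(\sigma\circ\pi_a)$, where $\sigma:=\tfrac1r\,s|_{[0,a]}$ is an honest state on $[0,a]$ with $\sigma(a)=1$. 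Thus every strong pre-state factors, up to the scalar $r$, through a projection onto a local MV-algebra and a genuine state on it, and this factorization is the object I would extend.

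The alternative extension is then manufactured from $\sigma$. In $N$ the element $a$ is Boolean, so $a\vee a^*=1$, $a\wedge a^*=0$, and $N$ splits as the direct product $[0,a]\times[0,a^*]$. I would pull $\sigma$ back along the first projection, defining a state on $N$ by $x\mapsto\sigma(x\wedge a)$, and verify finite additivity through the familiar three-case analysis (both arguments in $M$; one a complement $x_0^*$; both complements), the last case being vacuous exactly as in Proposition \ref{pr:int0} because $u\notin M$. This yields a state on $N$ whose restriction to $M$ is $\tfrac1r\,s$, so that $s$ itself is recovered as $r$ times this restriction; the construction is obtained from the local data on $[0,a]$ and the splitting of $N$, which is the sense in which it differs from the global extension of (\ref{eq:state0}).

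The step I expect to be the main obstacle is well-definedness: the construction must be independent of the idempotent $a$ at which the supremum is attained. To settle this I would take two such idempotents $a,b$, choose a common idempotent $c\ge a,b$, observe $s(a)=s(b)=s(c)=r$ and $s(x)=s(x\wedge a)=s(x\wedge c)=s(x\wedge b)$ for all $x$, and then use the compatibility relations (\ref{eq:x<y}) together with $\lambda_c(x)=\lambda_a(x)\oplus\lambda_c(a)$ from Proposition \ref{pr:000}(i) to show that $\tfrac1r\,s|_{[0,a]}$ and $\tfrac1r\,s|_{[0,b]}$ induce the very same pulled-back state on $N$. A secondary point is to confirm that the pulled-back map really attains the value $1$, hence is a state rather than merely a pre-state; this holds because $\sigma(a)=1$ and $a\wedge a=a$ force the value $1$ at $a\in N$.
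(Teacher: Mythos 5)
There is a genuine gap, and it sits exactly at the point the proposition is about. Your construction produces a state $\tau$ on $N$, $\tau(x)=\sigma(x\wedge a)$, whose restriction to $M$ is $\tfrac1r s$, not $s$. (Indeed, one checks $\tau=\tilde s_r$, the canonical extension of the normalized state $s_r=\tfrac1r s$ via (\ref{eq:state}); your localization $s(x)=s(x\wedge a)$ and the splitting $N\cong[0,a]\times[0,a^*]$ are just a longer route to the same object, and the well-definedness issue you worry about is automatic by the uniqueness in Proposition \ref{pr:int0}.) An ``extension'' of the strong pre-state $s$ here means a state on $N$ that restricts to $s$ on $M$; that is what Proposition \ref{pr:int0} delivers for arbitrary pre-states and what the present proposition must deliver by a second route. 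Saying that ``$s$ is recovered as $r$ times this restriction'' does not close the gap: $r\tau$ is not a state on $N$ when $r<1$ (its supremum is $r$), so you have not exhibited any element of $\mathcal S(N)$ extending $s$.

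The missing ingredient is the mixture with the state-morphism $s_\infty$, which vanishes on $M$ and so can absorb the deficit $1-r$ without disturbing the values on $M$: the paper sets $\tilde s:=r\,\tilde s_r+(1-r)s_\infty$, and then $\tilde s|_M=r\cdot\tfrac1r s+0=s$ while $\tilde s(1)=1$. This convex combination is precisely the ``other way'' of extension that distinguishes strong pre-states (where $r$ is attained and $\tfrac1r s$ is an honest state on $M$) from general pre-states handled by formula (\ref{eq:state0}). Your argument builds the first summand $\tilde s_r$ correctly but never forms the combination, so for $r<1$ no extension of $s$ is actually obtained. The secondary points you raise (the modular law for pre-states, centrality of the idempotent $a$, the case analysis on $N$) are all fine; they are just not where the content of the proposition lies.
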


\begin{proof}
If $s$ is the zero function on $M$, then $s$ is the restriction of the state-morphism $s_\infty$, which was defined in Proposition \ref{pr:state11}. Now let $s$ be a non-vanishing strong pre state on $M$, and let $r=\sup\{s(x)\colon x \in M\}$. Then $r \in (0,1]$ and $s_r:=\frac{1}{r}s$ is a state on $M$, so by Proposition \ref{pr:state11}, there is a unique state $\tilde s_r$ on $N$ defined by (\ref{eq:state}). If we define a mapping $\tilde s$, a convex combination,
\begin{equation}\label{eq:state1}
\tilde s=r\tilde s_r +(1-r)s_\infty,
\end{equation}
then $\tilde s$ is a state on $N$ such that its restriction to $M$ is $s$. It is interesting to note that formula (\ref{eq:state1}) works also for the zero function $s_0$ on $M$; then $\tilde s_0=s_\infty$.
We note that if $s$ is a state on $M$, then $\tilde s$ coincides with the state on $N$ defined by (\ref{eq:state}). Let $\kappa$ be a mapping from $\mathcal{PS}_s(M)$ into $\mathcal S(N)$ defined by $\kappa(s)=\tilde s$, $s \in \mathcal{PS}_s(M)$, where $\tilde s$ is defined by (\ref{eq:state1}). Then $\kappa$ is injective. Indeed, let $\kappa(s_1)=\kappa (s_2)$ for $s_1,s_2 \in \mathcal{SM}_s(M)$. Let $r_i=\max\{s_i(x) \colon x \in M\}$ for $i=1,2$. Then there is an element $x\in M$ such that $s_i(x)=r_i$ so that $r_1=r_2$ which yields $s_1=s_2$.

Now let $s_1,s_2$ be two strong pre-states and let $r_i=\max\{s_i(x)\colon x\in M\}$ for $i=1,2$. There is an element $x \in M$ such that $r_i=s_i(x)$.  Then for the convex combination $s= \lambda s_1 +(1-\lambda)s_2$, where $\lambda \in [0,1]$, we have that $s$ is also a strong pre-state such that for $r=\sup\{s(x) \colon x \in M\}$ we have $r = \lambda r_1 +(1-\lambda) r_2$, $\mathcal{PS}_s(M)$ is a convex set, and $\kappa(s)=\lambda \kappa(s_1)+(1-\lambda)\kappa(s_2)$.
\end{proof}

\begin{theorem}\label{th:int2}
Let $M$ be an EMV-algebra. Then any pre-state-morphism is either a state-morphism or the zero function $s_0$ on $M$, that is, $\mathcal{PSM}(M) = \mathcal{SM}(M) \cup \{s_0\}$, and every pre-state-morphism is a strong pre-state. The space $\mathcal{PSM}(M)$ is compact and homeomorphic to $\mathcal{SM}(N)$, where $N$ is the representing MV-algebra for $M$. The set $\mathcal{PS}(M)$ is compact and the closure of $\mathcal{SM}(M)$ in the weak topology of pre-states is equal to $\mathcal{PSM}(M)$.

The set $\partial \mathcal{PS}(M)$ of extremal pre-states on $M$ is the set of pre-state-morphisms, i.e. $\partial \mathcal{PS}(M) = \mathcal{PSM}(M)=\mathcal{SM}(M)\cup \{s_0\}$. Moreover,
\begin{equation}\label{eq:KMp}
\mathcal{PS}(M)=(\Con(\mathcal{PSM}(M)))^{-_M}.
\end{equation}
\end{theorem}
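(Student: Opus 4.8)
The plan is to reduce the entire statement to facts already available for the representing MV-algebra $N$ by transporting them along the affine homeomorphism $\kappa\colon \mathcal{PS}(M)\to\mathcal S(N)$ of Proposition \ref{pr:int0}; the only genuinely new ingredient is the algebraic identity $\mathcal{PSM}(M)=\mathcal{SM}(M)\cup\{s_0\}$, which I would establish first, directly on $M$.

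For that identity I would use crucially that a pre-state-morphism is in particular a pre-state, hence additive and (by the Properties listed after the definition) monotone with $s(0)=0$; this additivity is exactly what is spent here, since without it the claim fails, as the $0$--$1$ step map on $[0,1]$ shows. For any idempotent $a$ we have $a\oplus a=a$, whence $s(a)=s(a)\oplus s(a)=\min\{2s(a),1\}$, forcing $s(a)\in\{0,1\}$. If $s(a)=0$ for every idempotent $a$, then since each $x\in M$ lies below some idempotent and $s$ is monotone, $s\equiv 0=s_0$. Otherwise there is an idempotent $a$ with $s(a)=1$, so $s$ attains the value $1$ and, being additive, is a state; then the pre-state-morphism equation $s(x\oplus y)=s(x)\oplus s(y)=\min\{s(x)+s(y),1\}$ is precisely condition (iv) of Proposition \ref{pr:state4}, which gives that $s$ is a state-morphism. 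The reverse inclusion is immediate: every state-morphism satisfies (iv) of Proposition \ref{pr:state4} and hence preserves $\oplus$, and $s_0$ trivially preserves $\oplus$. This also shows that every pre-state-morphism is a strong pre-state, the supremum $1$ (respectively $0$) being attained.

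The topological and convex-geometric assertions I would then derive entirely through $\kappa$. Since $N$ has a top element, $\mathcal S(N)$ is compact by Proposition \ref{pr:state9}, so $\mathcal{PS}(M)$ is compact. From $\tilde s_0=s_\infty$ and $\kappa|_{\mathcal{SM}(M)}=\phi$ together with Proposition \ref{pr:state11} one gets $\kappa(\mathcal{PSM}(M))=\phi(\mathcal{SM}(M))\cup\{s_\infty\}=\mathcal{SM}(N)$, whence $\mathcal{PSM}(M)$ is compact and homeomorphic to $\mathcal{SM}(N)$. Because $\kappa$ is affine and a bijection onto the convex set $\mathcal S(N)$, it carries extreme points to extreme points; combined with $\partial\mathcal S(N)=\mathcal{SM}(N)$ (Theorem \ref{th:state8} applied to the MV-algebra $N$) this gives $\partial\mathcal{PS}(M)=\kappa^{-1}(\mathcal{SM}(N))=\mathcal{PSM}(M)=\mathcal{SM}(M)\cup\{s_0\}$. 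Finally, applying $\kappa^{-1}$ to the representation $\mathcal S(N)=(\Con(\mathcal{SM}(N)))^{-_N}$ (Theorem \ref{th:state12}) and using that an affine homeomorphism commutes with convex hulls and closures yields $\mathcal{PS}(M)=(\Con(\mathcal{PSM}(M)))^{-_M}$.

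It remains to identify the closure of $\mathcal{SM}(M)$ with $\mathcal{PSM}(M)$; transported along $\kappa$ this is the assertion $\overline{\phi(\mathcal{SM}(M))}=\mathcal{SM}(N)$ in $\mathcal S(N)$. Since $\mathcal{SM}(N)=\phi(\mathcal{SM}(M))\cup\{s_\infty\}$ is compact Hausdorff, the closure of $\phi(\mathcal{SM}(M))$ is either $\phi(\mathcal{SM}(M))$ itself or all of $\mathcal{SM}(N)$; the former is impossible when $M$ has no top element, because $\mathcal{SM}(M)$, and hence its homeomorphic copy $\phi(\mathcal{SM}(M))$, is then not compact by \cite[Thm 4.10]{DvZa1} and so cannot be closed in a compact space. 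Thus the missing point $s_\infty$ is a limit point, $\overline{\phi(\mathcal{SM}(M))}=\mathcal{SM}(N)$, and pulling back gives $\overline{\mathcal{SM}(M)}=\kappa^{-1}(\mathcal{SM}(N))=\mathcal{PSM}(M)$. I expect the main obstacle to be precisely the first step, ruling out exotic $\oplus$-preserving maps, where the additivity built into being a pre-state is indispensable; once $\mathcal{PSM}(M)=\mathcal{SM}(M)\cup\{s_0\}$ is secured, everything else is a routine transport of MV-algebra facts through $\kappa$. The degenerate cases $M=\{0\}$ and $M$ with a top element I would dispose of separately, the latter directly from MV-algebra theory.
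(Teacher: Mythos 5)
Your proof is correct and follows the same overall strategy as the paper's: first establish $\mathcal{PSM}(M)=\mathcal{SM}(M)\cup\{s_0\}$ by examining the values of $s$ at idempotents, then transport the topological and convex-geometric claims from the MV-algebra $N$ through the affine homeomorphism $\kappa$ of Proposition \ref{pr:int0}. Three points of divergence are worth recording. First, you make explicit something the paper leaves tacit: in the case where some idempotent satisfies $s(a)=1$, concluding via Proposition \ref{pr:state4}(iv) that $s$ is a state-morphism requires that $s$ be additive for $+$, i.e.\ that a pre-state-morphism is in particular a pre-state; your observation that the $0$--$1$ step map on $[0,1]$ (or $x\mapsto\min\{2x,1\}$) preserves $\oplus$ without being a state-morphism shows this is the only reading of the definition under which the identity holds, and the paper's one-line "then $s$ is a state-morphism" silently uses it. Second, for the density of $\mathcal{SM}(M)$ in $\mathcal{PSM}(M)$ and the homeomorphism with $\mathcal{SM}(N)$, the paper cites the one-point-compactification theorem \cite[Thm 4.13]{DvZa1}, whereas you derive both from the image description $\kappa(\mathcal{PSM}(M))=\mathcal{SM}(N)$ together with the non-compactness of $\mathcal{SM}(M)$ (Proposition \ref{pr:state9}); this is more self-contained. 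Third, the paper proves compactness of $\mathcal{PS}(M)$ directly (pointwise limits preserve $+$) and gets (\ref{eq:KMp}) by applying Krein--Mil'man to the compact convex set $\mathcal{PS}(M)$ itself, while you pull both back through $\kappa$ from $\mathcal S(N)$ --- equivalent, since $\kappa$ is an affine homeomorphism. One caveat applies to both proofs equally: the sub-claims $\mathcal{PSM}(M)\cong\mathcal{SM}(N)$ and $\overline{\mathcal{SM}(M)}=\mathcal{PSM}(M)$ really only hold when $M$ has no top element (otherwise $\mathcal{SM}(M)$ is already compact, hence closed, and $\mathcal{PSM}(M)$ has the extra isolated point $s_0$), so deferring the top-element case "to MV-algebra theory" cannot recover those two assertions as literally stated --- but this is a defect of the theorem's formulation, not of your argument.
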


\begin{proof}
If $\{s_\alpha\}_\alpha$ is a net of pre-states on $M$ and $s(x)=\lim_\alpha s_\alpha(x)$ for each $x \in M$, then $s$ preserves partial addition $+$, so that $\mathcal{PS}(M)$ is a compact set in the product topology. Similarly, the set $\mathcal{PSM}(M)$ is compact.

Let $s$ be a pre-state-morphism on $M$. There are two cases: (i) There is an idempotent $a$ such that $s(a)=1$, then $s$ is a state-morphism. (ii) For each idempotent $a\in M$ we have $s(a)<1$. In view of $s(a)= s(a\oplus a) = \min\{s(a)+s(a),1\}= s(a)\oplus s(a)$,  we have $s(a)=0$ because the MV-algebra of the real interval $[0,1]$ has only two idempotents, namely $0$ and $1$. Since every pre-state-morphism is monotone, we have $s(x)=0$ for each $x\in M$, i.e. $s$ is the zero function $s_0$ which proves $\mathcal{PSM}(M) = \mathcal{SM}(M) \cup \{s_0\}$. By Proposition \ref{pr:state4}, each pre-state morphism is a strong pre-state.

Now let $\{s_\alpha\}_\alpha$ be a net of pre-state-morphisms on $M$ and let $s(x)=\lim_\alpha s_\alpha(x)$ exists for each $x \in M$. Then $s$ is clearly a pre-state-morphism because it preserves $\oplus$. Due to the preceding paragraph, we have that the closure of $\mathcal{SM}(M)$ is the set of pre-state-morphisms on $M$.

If $M$ has a top element, then $M=N$ and clearly $\mathcal S(M) =\mathcal S(N)$ and $\mathcal{SM}(M) = \mathcal{SM}(N)$. Now let $M$ have no top element.

We note that according to \cite[Thm 4.13]{DvZa1}, the set of state-morphisms $\mathcal{SM}(N)$ is the one-point compactification of $\mathcal{SM}(M)$ and the mapping $s\mapsto \tilde s$, $s \in \mathcal{SM}(M)$, where $\tilde s$ is defined by (\ref{eq:state}), is a continuous embedding, and the zero pre-state $s_0$ maps to $s_\infty$. Whence, we have that $\mathcal{PSM}(M)$ is homeomorphic to $\mathcal{SM}(N)$.

By Proposition \ref{pr:int0}, the mapping $\kappa:\mathcal{PS}(M)\to \mathcal{S}(N)$, given by $\kappa(s)=\tilde s$, $s\in \mathcal{SM}(M)$, and by (\ref{th:int2}), is an affine homeomorphism, and by Theorem \ref{th:state8}, $\partial \mathcal S(N)=\mathcal{SM}(N)=\{\tilde s\colon s\in \mathcal{SM}(M)\}\cup\{s_\infty\}$ which gives the result $\partial \mathcal{PS}(M)=\mathcal{SM}(M)\cup\{s_0\}$, where $s_0$ is the zero function on $M$.

Since $\mathcal{PS}(M)$ is convex and compact in the weak topology of pre-states, applying the Krein--Mil'man theorem \cite[Thm 5.17]{Goo}, we obtain immediately (\ref{eq:KMp}).
\end{proof}

As it was already said, in \cite[Thm 4.13]{DvZa1}, it was shown that $\mathcal{SM}(N)$ is the one-point compactification of $\mathcal{SM}(M)$ whenever $M$ has no top element. Of course,  this is not true for the state-spaces $\mathcal S(N)$ and $\mathcal S(M)$ because if we take a state-morphism $s$ on $M$, then $s_\lambda =\lambda \tilde s+(1-\lambda) s_\infty$ for each $\lambda \in (0,1)$ is a state on $N$. We have uncountably many different states on $N$ and $s_\lambda(x)=\lambda s(x)$ if $x \in M$ is not a state on $M$; it is only a pre-state.

\section{Jordan Signed Measures}%6

In the section, we define Jordan signed measures and strong Jordan signed measures on EMV-algebras. We show that they form a Dedekind $\sigma$-complete $\ell$-groups.

We extend the notion of a state and a pre-state as follows. A mapping $m: M \to \mathbb R$ is said to be a {\it signed measure} if $m(x+y)=m(x)+m(y)$. If a signed measure $m$ is positive, then $m$ is said to be a {\it measure}, and if $m$ is a difference of two measures, $m$ is said to be a {\it Jordan signed measure}. Whence a pre-state is a measure with values in the interval $[0,1]$. We denote by $\mathcal J(M)$ the set of Jordan signed measures.

\begin{proposition}\label{pr:3.1}  Let $M$ be an EMV-algebra and let $d:M\to
\mathbb R$ be a subadditive mapping, i.e. $d(x+y) \le d(x)+d(y)$. For all $x\in M$, assume that the set
\begin{equation}\label{eq:(3.1)}
D(x):=\{d(x_1)+\cdots+d(x_n): x = x_1+\cdots+x_n, \ x_1,\ldots,x_n
\in M\} %\eqno(3.1)
\end{equation}
is bounded above in $\mathbb R$.  Then there is a signed measure $m$ on $M$ such that $m(x)=\bigvee D(x):=\sup D(x)$ for all $x\in M$.

\end{proposition}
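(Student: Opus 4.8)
The plan is to verify that $m(x):=\sup D(x)$ is a well-defined real-valued function and then to prove the additivity $m(x+y)=m(x)+m(y)$ whenever $x+y$ is defined in $M$, which is exactly the assertion that $m$ is a signed measure. First I would record that the trivial one-term decomposition $x=x$ shows $d(x)\in D(x)$, so $m(x)\ge d(x)>-\infty$; combined with the hypothesis that $D(x)$ is bounded above, $m(x)$ is a genuine real number for every $x\in M$. It then remains to establish the two inequalities $m(x)+m(y)\le m(x+y)$ and $m(x+y)\le m(x)+m(y)$.

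For the first (easy) inequality, I would fix decompositions $x=x_1+\cdots+x_n$ and $y=y_1+\cdots+y_m$. Since $x+y$ is defined and the operation $+$ is commutative and associative on the generalized effect algebra $(M;+,0)$ of Proposition \ref{pr:state1}, repeated re-association shows that $x_1+\cdots+x_n+y_1+\cdots+y_m$ is defined and equals $x+y$; hence it is a legitimate decomposition of $x+y$. Therefore $\sum_{i}d(x_i)+\sum_{j}d(y_j)\le m(x+y)$, and taking the supremum first over decompositions of $x$ and then over decompositions of $y$ yields $m(x)+m(y)\le m(x+y)$.

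The substantial direction is $m(x+y)\le m(x)+m(y)$, and here the key tool is the Riesz Decomposition Property of $(M;+,0)$ furnished by Proposition \ref{pr:state1}. I would first upgrade the binary RDP to the $n$-ary Riesz refinement property by a routine induction: if $\sum_{i=1}^{p}a_i=\sum_{j=1}^{q}b_j$ in $M$, then there exist elements $c_{ij}\in M$ with $a_i=\sum_{j}c_{ij}$ and $b_j=\sum_{i}c_{ij}$. Applying this with the two-term decomposition $a_1=x$, $a_2=y$ against an arbitrary decomposition $x+y=z_1+\cdots+z_k$, I obtain elements $c_{1l},c_{2l}$ with $z_l=c_{1l}+c_{2l}$, $x=\sum_{l}c_{1l}$ and $y=\sum_{l}c_{2l}$. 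Subadditivity of $d$ then gives $d(z_l)\le d(c_{1l})+d(c_{2l})$, so
\begin{equation*}
\sum_{l=1}^{k}d(z_l)\le \sum_{l}d(c_{1l})+\sum_{l}d(c_{2l})\le m(x)+m(y),
\end{equation*}
the last step because $\sum_{l}c_{1l}=x$ and $\sum_{l}c_{2l}=y$ are decompositions of $x$ and $y$. Taking the supremum over all decompositions of $x+y$ gives $m(x+y)\le m(x)+m(y)$, and combining the two inequalities completes the proof.

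I expect the main obstacle to be the careful handling of the refinement step: one must make sure that the binary RDP genuinely bootstraps to the $n$-ary common-refinement property in the partial-operation setting of a generalized effect algebra, and that all intermediate partial sums occurring in the refinement are actually defined in $M$. Once the refinement property is in hand, the remainder is a short computation using only the subadditivity of $d$ and the definition of $m$ as a supremum.
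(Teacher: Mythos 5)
Your proposal is correct and follows essentially the same route as the paper: the easy inequality by concatenating decompositions, and the reverse inequality by refining a decomposition of $x+y$ against the two-term decomposition $x+y$ via the Riesz Decomposition Property of $(M;+,0)$ from Proposition \ref{pr:state1}, then invoking subadditivity of $d$. The only difference is that you explicitly note the need to bootstrap the binary RDP to the $2\times n$ refinement, a routine induction the paper's proof takes for granted.
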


\begin{proof}
By (\ref{eq:(3.1)}), $m(x):=\bigvee D(x)$ is a well-defined mapping for all $x
\in M$. It is clear that $m(0)=0$ and now we are  going to show
that $m$ is additive on $M$.

Let $x,y \in M$ with $x+y \in M$ be given. For all decompositions
$$ x = x_1 +\cdots+x_n \ \mbox{and} \ y=y_1+\cdots +y_k$$
with all $x_i,y_j \in M$, we have $x+y = x_1+\cdots+x_n +
y_1+\cdots + y_k$, which yields

$$ \sum d(x_i)+\sum d(y_j) \le m(x+y).
$$
Therefore, $u+v \le m(x+y)$ for all $u\in D(x)$ and $v\in D(y)$.
Since $\mathbb R$ is a Dedekind complete Abelian $\ell$-group, $+$ distributes over an existing $\bigvee=\sup$:
\begin{eqnarray*}
m(x)+m(y)&=& \left(\bigvee D(x)\right) +m(y) = \bigvee_{u \in D(x)} (u+m(y))\\
&=& \bigvee_{u\in D(x)} \left(u+ \left(\bigvee D(y)\right)\right) =
\bigvee_{u\in
D(x)}\bigvee_{v \in D(y)} (u+v)\\
&\le& m(x+y).
\end{eqnarray*}

Conversely, let $x+y=z_1+\cdots+z_n$, where each $z_i \in M$. Then
RDP holding also in $M$ implies that there are elements $x_1,\ldots,x_n, y_1,\ldots, y_n
\in M$ such that $x = x_1+\cdots+x_n$, $y = y_1+\cdots+y_n$ and
$z_i = x_i+y_i$ for $i=1,\ldots,n$.  This yields
$$
\sum_i d(z_i) \le \sum_i (d(x_i)+d(y_i)) = \left(\sum_i
d(x_i)\right) + \left(\sum_i d(y_i)\right) \le m(x)+m(y),
$$
and therefore, $m(x+y)\le m(x)+m(y)$ and finally, $m(x+y)=m(x)+(y)$
for all $x,y \in M$.
\end{proof}

We remind that for two Jordan signed measures $m_1$ and $m_2$ on $M$, we write $m_1 \le^+ m_2$ if $m_1(x)\le m_2(x)$ for all $x\in M$. Now we exhibit the lattice properties of the set of Jordan signed measures on $M$ with respect to the partial order $\le^+$.

\begin{theorem}\label{th:3.4} Let $M$ be an EMV-algebra. For the set $\mathcal J(M)$ of Jordan signed measures on $M$ we have:

\begin{enumerate}

\item[(a)] $\mathcal J(M)$ is a Dedekind complete $\ell$-group with respect to $\le^+$.

\item[(b)] If $\{m_i\}_{i\in I}$ is a non-empty set of $\mathcal J(M)$
that is bounded above, and if $d(x)=\bigvee_i m_i(x)$ for all $x \in
M$, then
$$ \left(\bigvee_i m_i\right)(x) = \bigvee\{d(x_1)+\cdots + d(x_n):
x= x_1+\cdots + x_n, \ x_1,\ldots, x_n \in M\}
$$
for all $x \in M$.

\item[(c)] If $\{m_i\}_{i\in I}$ is a non-empty set of $\mathcal J(M)$
that is bounded below, and if $e(x)=\bigwedge_i m_i(x)$ for all $x
\in M$, then
$$ \left(\bigwedge_i m_i\right)(x) = \bigwedge\{e(x_1)+\cdots + e(x_n):
x= x_1+\cdots + x_n, \ x_1,\ldots, x_n \in M\}
$$
for all $x \in M$.

\end{enumerate}
\end{theorem}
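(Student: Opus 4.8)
The plan is to deduce everything from Proposition \ref{pr:3.1}, which manufactures a signed measure out of any subadditive $d$ whose decomposition sets $D(x)$ are bounded above. First I would record the routine structural facts: the Jordan signed measures are closed under pointwise sum and negation and contain the zero measure, since $(p_1-q_1)+(p_2-q_2)=(p_1+p_2)-(q_1+q_2)$ and $-(p-q)=q-p$ for measures $p_i,q_i$; hence $\mathcal J(M)$ is an abelian group, and the pointwise order $\le^+$ is evidently translation invariant, so $\mathcal J(M)$ is a partially ordered abelian group. To obtain an $\ell$-group structure and Dedekind completeness it then suffices to produce suprema of bounded-above families, which is exactly the content of (b).

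For (b), let $\{m_i\}_{i\in I}$ be bounded above by some $m_0\in\mathcal J(M)$ and put $d(x)=\bigvee_i m_i(x)$. The value $d(x)$ is a finite real number, being squeezed between $m_{i_0}(x)$ and $m_0(x)$ for any fixed index $i_0$, and $d$ is subadditive because $d(x+y)=\bigvee_i(m_i(x)+m_i(y))\le\bigvee_i m_i(x)+\bigvee_i m_i(y)$. For any decomposition $x=x_1+\cdots+x_n$ one has $\sum_j d(x_j)\le\sum_j m_0(x_j)=m_0(x)$ by additivity of $m_0$, so $D(x)$ is bounded above by $m_0(x)$ and Proposition \ref{pr:3.1} yields a signed measure $m$ with $m(x)=\bigvee D(x)$, which is precisely the right-hand side asserted in (b).

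It remains to identify $m$ as $\bigvee_i m_i$ in $\mathcal J(M)$. Using the trivial decomposition $x=x$ gives $m(x)\ge d(x)\ge m_i(x)$, so $m\ge^+ m_i$ for every $i$; moreover $m-m_{i_0}\ge^+0$ is a measure, so $m=(m-m_{i_0})+m_{i_0}$ is again a difference of measures, i.e.\ $m\in\mathcal J(M)$. If $m'\in\mathcal J(M)$ is any upper bound, then $d(x_j)=\bigvee_i m_i(x_j)\le m'(x_j)$ for each piece, and summing over a decomposition gives $\sum_j d(x_j)\le\sum_j m'(x_j)=m'(x)$; taking the supremum over all decompositions yields $m(x)\le m'(x)$. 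Thus $m$ is the least upper bound, proving (b). Specializing to two-element families gives binary joins, so $\mathcal J(M)$ is an $\ell$-group, and the bounded-above case just proved is exactly Dedekind completeness, establishing (a).

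Part (c) I would obtain by the $\ell$-group duality $\bigwedge_i m_i=-\bigvee_i(-m_i)$: if $\{m_i\}$ is bounded below then $\{-m_i\}$ is bounded above, $\bigvee_i(-m_i)$ exists by (b), and writing $d'(x)=\bigvee_i(-m_i(x))=-e(x)$, formula (b) for the family $\{-m_i\}$ reads $\bigl(\bigvee_i(-m_i)\bigr)(x)=\bigvee\{-e(x_1)-\cdots-e(x_n)\}=-\bigwedge\{e(x_1)+\cdots+e(x_n)\}$ over decompositions of $x$, and negating gives (c). The main obstacle is the least-upper-bound verification in (b): everything hinges on the interplay between the subadditivity of $d$ and the additivity of an arbitrary upper bound $m'$, together with the bookkeeping that the signed measure handed back by Proposition \ref{pr:3.1} is genuinely a difference of measures rather than merely additive. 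The Riesz Decomposition Property, which holds in $M$ by Proposition \ref{pr:state1} and is what makes Proposition \ref{pr:3.1} applicable, is doing the real work behind the scenes.
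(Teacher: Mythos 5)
Your proposal is correct and follows essentially the same route as the paper: apply Proposition \ref{pr:3.1} to the subadditive map $d(x)=\bigvee_i m_i(x)$, check that the resulting signed measure dominates each $m_i$, lies in $\mathcal J(M)$ because $m-m_{i_0}$ is a (positive) measure, and is minimal among upper bounds by summing $d(x_j)\le m'(x_j)$ over decompositions, then obtain (c) by the anti-automorphism $m\mapsto -m$. Your explicit remarks on the group structure of $\mathcal J(M)$ and on where RDP enters are slightly more detailed than the paper's, but the argument is the same.
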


\begin{proof} Let $g \in \mathcal J(M)$ be an upper bound for $\{m_i\}$.
For any $x \in M$, we have $m_i(x)\le g(x)$, so that the mapping
$d(x)=\bigvee_i m_i(x)$ defined on $M$ is  a subadditive mapping.
For any $x \in M$ and any decomposition $x = x_1+\cdots + x_n$
with all $x_i \in M$, we conclude $d(x_1)+\cdots+ d(x_n)\le
m(x_1)+\cdots + m(x_n)=g(x)$.  Hence, $g(x)$ is an upper bound for
$D(x)$ defined by (\ref{eq:(3.1)}).

Proposition \ref{pr:3.1} entails there is a signed measure $m:M \to \mathbb R$ such that $m(x)=\bigvee D(x)$. For every $x \in M$ and
every $m_i$ we have $m_i(x)\le d(x)\le m(x)$, which gives $m_i \le^+
m$. The mappings $m-m_i$ are positive measures belonging to
$\mathcal J(M)$, which gives $m \in \mathcal J(M)$. If $h \in \mathcal J(M)$ such that $m_i\le^+ h$ for any $i\in I$, then $d(x)\le h(x)$ for any $x \in
M$. As above, we can show that $h(x)$ is also an upper bound for
$D(x)$, whence $m(x)\le h(x)$ for any $x \in M$, which gives $m\le
^+h$. Alias, we have proved that $m$ is the supremum of
$\{m_i\}_{i\in I}$, and its form is given by (b).

Now if we apply  the order anti-automorphism $z\mapsto - z$ in $\mathbb R$,
we see that if the set $\{m_i\}_{i\in I}$  in $\mathcal J(M)$ is bounded
below, then it has an infimum given by (c).

It is clear that $\mathcal J(M)$ is directed.  Combining (b)
and (c), we see that $\mathcal J(M)$ is a Dedekind complete $\ell$-group.
\end{proof}

For joins and meets of finitely many Jordan signed measures, Theorem
\ref{th:3.4} can be reformulated as follows.

\begin{theorem}\label{th:3.5}
If $M$ is an EMV-algebra, then the group
$\mathcal J(M)$ of all Jordan signed measures on $M$ is an Abelian
Dedekind complete lattice ordered real vector space. Given
$m_1,\ldots, m_n \in \mathcal J(M)$,

\begin{eqnarray*}
\left( \bigvee_{i=1}^n m_i\right)(x) = \sup\{m_1(x_1)+\cdots
+m_n(x_n)\colon
x = x_1+\cdots +x_n,\ x_1,\ldots, x_n \in M\},\\
\left( \bigwedge_{i=1}^n m_i \right)(x) = \inf\{m_1(x_1)+\cdots
+m_n(x_n)\colon
x = x_1+\cdots +x_n,\ x_1,\ldots, x_n \in M\},\\
\end{eqnarray*}
for all $x \in M$.
\end{theorem}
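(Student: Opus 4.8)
The plan is to derive Theorem~\ref{th:3.5} from Theorem~\ref{th:3.4}, which already supplies everything except the simplified formulas for finite joins and meets. Indeed, Theorem~\ref{th:3.4} gives that $\mathcal J(M)$ is a Dedekind complete $\ell$-group under $\le^+$; to upgrade this to a lattice ordered real vector space I would only note that for any real scalar $c$ and any $m=m^+-m^-$ with $m^+,m^-$ measures, the map $cm$ is again a Jordan signed measure (scale $m^+,m^-$ by $c$ when $c\ge 0$, and interchange their roles when $c<0$), and that this scalar multiplication is compatible with $\le^+$ and with the lattice operations. The genuinely new content is to show that for finitely many $m_1,\ldots,m_n$ the join can be computed over decompositions into exactly $n$ summands, one per measure.

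I would fix $x\in M$ and write $d(y)=\bigvee_i m_i(y)=\max_i m_i(y)$, which is legitimate since the index set is finite. Let $A(x)$ denote the set on the right-hand side of the join formula, i.e. all sums $m_1(x_1)+\cdots+m_n(x_n)$ with $x=x_1+\cdots+x_n$. By Theorem~\ref{th:3.4}(b) it suffices to prove that $\sup A(x)$ equals $\sup\{d(y_1)+\cdots+d(y_k)\colon x=y_1+\cdots+y_k\}$. One inequality is immediate: any $n$-fold decomposition $x=x_1+\cdots+x_n$ satisfies $m_i(x_i)\le d(x_i)$, so $m_1(x_1)+\cdots+m_n(x_n)\le d(x_1)+\cdots+d(x_n)$, a member of the right-hand family; hence $\sup A(x)\le\big(\bigvee_i m_i\big)(x)$.

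The reverse inequality is the heart of the argument. Given an arbitrary decomposition $x=y_1+\cdots+y_k$, for each $j$ I would choose an index $\iota(j)\in\{1,\ldots,n\}$ with $d(y_j)=m_{\iota(j)}(y_j)$, and set $z_i=\sum_{j\colon \iota(j)=i} y_j$ (with $z_i=0$ if no $j$ is assigned to $i$). The main obstacle is to justify that each $z_i$ is defined in $M$ and that $x=z_1+\cdots+z_n$: this rests on the generalized effect algebra structure of $(M;+,0)$ from Proposition~\ref{pr:state1}, since in a GEA every subsum of a defined finite sum is again defined. Granting this, additivity of each $m_i$ gives $m_i(z_i)=\sum_{j\colon \iota(j)=i} m_i(y_j)$, whence
\[
\sum_{j=1}^k d(y_j)=\sum_{j=1}^k m_{\iota(j)}(y_j)=\sum_{i=1}^n m_i(z_i)\in A(x),
\]
so $d(y_1)+\cdots+d(y_k)\le\sup A(x)$. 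Taking the supremum over all decompositions yields $\big(\bigvee_i m_i\big)(x)\le\sup A(x)$, and the two suprema coincide. Finally, the meet formula follows by applying the order anti-automorphism $z\mapsto -z$ of $\mathbb R$ exactly as in the proof of Theorem~\ref{th:3.4}: since $\bigwedge_i m_i=-\bigvee_i(-m_i)$ and $\sup\{-t\colon t\in A(x)\}=-\inf A(x)$, the join formula for $-m_1,\ldots,-m_n$ transforms into the claimed infimum formula.
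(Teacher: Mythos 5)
Your proof is correct and follows essentially the same route as the paper: both reduce to Theorem~\ref{th:3.4}(b) with $d=\max_i m_i$ and both hinge on the same regrouping step, partitioning the indices of an arbitrary decomposition $x=y_1+\cdots+y_k$ according to which $m_i$ attains the maximum at $y_j$ and summing each block into a single $z_i$ (the subsums being defined by the GEA structure from Proposition~\ref{pr:state1}). The only difference is cosmetic: the paper picks an $\epsilon$-near-optimal decomposition and shows the regrouped sum exceeds $m(x)-\epsilon$, whereas you observe directly that every value $\sum_j d(y_j)$ is realized by an element of $A(x)$, which dispenses with the $\epsilon$.
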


\begin{proof}
Due to Theorem \ref{th:3.4}, $\mathcal J(M)$ is an Abelian Dedekind
complete $\ell$-group. It is evident that it is a Riesz space, i.e.,
a lattice ordered real vector space.

Take $m_1,\ldots,m_n \in \mathcal J(M)$ and let $m = m_1\vee \cdots
\vee m_n$. For any $x \in M$ and $x=x_1+\cdots+x_n$ with
$x_1,\ldots, x_n \in M$, we have  $m_1(x_1)+\cdots + m_n(x_n) \le
m(x_1)+\cdots + m(x_n) = m(x)$.  Due to Theorem \ref{th:3.4}, given
an arbitrary real number $\epsilon >0$, there is a decomposition $x
= y_1+\cdots+y_k$ with $y_1,\ldots,y_k \in M$ such that

$$ \sum_{j=1}^k \max\{m_1(y_j),\ldots,m_n(y_j)\} > m(x)-\epsilon.
$$

If $k < n$, we can add the zero elements to the decomposition, if
necessary, so that without loss of generality, we can assume that
$k\ge n$.

We decompose the set $\{1,\ldots,k\}$ into mutually non-empty disjoint sets
$J(1),\ldots,J(n)$ such that
$$J(i):=\{j \in \{1,\ldots,k\}\colon \max\{m_1(y_j),\ldots, m_n(y_j)\}=
m_i(y_j)\}.
$$
and
Then $J(i)=\{j_{t_1},\ldots,j_{t_{n_i}}\}$. For every $i=1,\ldots,n$, let $x_i= \sum\{y_k\colon k \in J_i\}$, then $x = x_1+\cdots + x_n$,
$$ \sum_{i=1}^n m_i(x_i) = \sum_{i=1}^n \sum_{j \in
J(i)}m_i(y_j)=\sum_{i=1}^k\max\{m_1(y_j),\ldots, m_n(y_j)\} >
m(x)-\epsilon.$$
This implies $m(x)$ equals the given supremum.

The formula for $(m_1\wedge \cdots \wedge m_n)(x)$ can be obtained
applying   the order anti-automorphism $m\mapsto -m$ holding in
$\mathcal J(M)$.
\end{proof}

If we denote by $\mathcal J_b(M)$ the set of bounded Jordan signed measures, then Theorems \ref{th:3.4}--\ref{th:3.5} hold also for $\mathcal J_b(M)$. We note that if $M$ is a $\sigma$-complete EMV-algebra, then every measure $m$ on $M$ is bounded, and there is an element $x_0\in M$ such that $m(x_0)=\sup\{m(x) \colon x \in M\}$. Indeed, let $r=\sup\{m(x) \colon x \in M\}$. Then there is a sequence of elements $\{x_n\}$ of $M$ such that $r = \sup_n m(x_n)$. Then for $x_0 = \bigvee_n x_n$, we have $r \ge m(x_0) \ge m(x_n)$ so that $r=m(x_0)$ and $r$ is a finite number.

A special interest is devoted to measures $m$ with the following property: There is an element $a\in M$ such that $m(a)=\sup\{m(x)\colon x \in M\}$; such measures are said to be {\it strong measures}. Clearly, if $s$ is a state on $M$ and $r\in [0,\infty)$, then $sr$ is a strong measure. Conversely, if $m$ is a not-vanishing strong measure, then $\frac{1}{r}m$, where $r=\sup\{m(x)\colon x \in M\}$, is a state on $M$. Difference of two strong measure is said to be a {\it strong Jordan signed measure}; we denote by $\mathcal J_s(M)$ the set of strong Jordan signed measures on $M$.

\begin{theorem}\label{th:3.6}
Let $M$ be an EMV-algebra. Then $\mathcal J_s(M)$ is an $\ell$-group which is also a Riesz space.
\end{theorem}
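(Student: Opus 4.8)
The plan is to realize $\mathcal J_s(M)$ as a sub-$\ell$-group and a Riesz subspace of the Dedekind complete lattice-ordered real vector space $\mathcal J(M)$ provided by Theorems \ref{th:3.4} and \ref{th:3.5}. Write $C$ for the set of strong measures on $M$. Since every element of $\mathcal J_s(M)$ is a difference of two members of $C$, it suffices to prove (a) that $C$ is a cone, so that $\mathcal J_s(M)=C-C$ is a real vector subspace of $\mathcal J(M)$, and (b) that $\mathcal J_s(M)$ is closed under the lattice operations inherited from $\mathcal J(M)$. Once this is done, $\mathcal J_s(M)$ is a subgroup of $\mathcal J(M)$ closed under $m\mapsto m\vee 0$, and the $\ell$-group identities $m_1\vee m_2=m_1+(m_2-m_1)\vee 0$ and $m_1\wedge m_2=m_1-(m_1-m_2)\vee 0$ force it to be a sublattice, hence an $\ell$-group; being also a real vector subspace, the Riesz structure of $\mathcal J(M)$ restricts to it.

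First I would check that $C$ is a cone. Closure under multiplication by a nonnegative scalar is immediate and $0\in C$. For additivity, let $p,q\in C$ attain their suprema at $a_p,a_q\in M$, say $p(a_p)=\sup\{p(x)\colon x\in M\}=:r_p$ and $q(a_q)=:r_q$, and put $a:=a_p\vee a_q$. Every positive measure is monotone, so $r_p=p(a_p)\le p(a)\le r_p$ and likewise $q(a)=r_q$; hence $(p+q)(a)=r_p+r_q$, while $(p+q)(x)\le r_p+r_q$ for all $x$. Thus $p+q$ attains its supremum at $a$ and $p+q\in C$. Consequently $\mathcal J_s(M)=C-C$ is a real vector subspace of $\mathcal J(M)$.

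The hard part will be showing that the positive part $m^+:=m\vee 0$ (computed in $\mathcal J(M)$) of an element $m=p-q\in\mathcal J_s(M)$ is again a strong measure, and the delicacy is entirely in the attainment of its supremum. I would first record the modular identity $n(x\vee y)+n(x\wedge y)=n(x)+n(y)$, valid for \emph{every} signed measure $n$ on $M$ just as in Proposition \ref{pr:state3}(iii), since it uses only $x\vee y=x+(y\odot\lambda_a(x\wedge y))$ and $y=(x\wedge y)+(y\odot\lambda_a(x\wedge y))$ for $x,y\le a\in\mathcal I(M)$. Applying it to $p$ with $a_p$ in the role of the second argument gives $p(y)+p(a_p)=p(y\vee a_p)+p(y\wedge a_p)$; as $a_p\le y\vee a_p$ and $p(a_p)=\sup p$, monotonicity forces $p(y\vee a_p)=p(a_p)$, whence $p(y)=p(y\wedge a_p)$ for every $y\in M$. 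Fixing $a\in\mathcal I(M)$ with $y,a_p\le a$ and writing $z:=y\odot\lambda_a(y\wedge a_p)$, so that $y=(y\wedge a_p)+z$, I then obtain $m(y)-m(y\wedge a_p)=\bigl(p(y)-p(y\wedge a_p)\bigr)-q(z)=-q(z)\le 0$ because $q$ is positive; hence $m(y)\le m(y\wedge a_p)$ for all $y$.

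Finally I would assemble the estimates. By Theorem \ref{th:3.5} with the second measure equal to $0$, $m^+(x)=\sup\{m(z)\colon z\le x\}$; so on one hand $m^+(x)\le\sup\{m(z)\colon z\in M\}=:s$ for all $x$ (note $s\ge m(0)=0$), while $m^+\ge m$ yields $\sup m^+\ge s$, giving $\sup m^+=s$. On the other hand the inequality $m(y)\le m(y\wedge a_p)$ shows $s=\sup_y m(y)\le\sup\{m(z)\colon z\le a_p\}=m^+(a_p)\le s$, so $m^+(a_p)=s=\sup m^+$. Thus $m^+$ is a positive measure attaining its supremum at $a_p$, i.e. a strong measure, which proves closure of $\mathcal J_s(M)$ under $m\mapsto m\vee 0$. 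Together with the cone argument this makes $\mathcal J_s(M)$ an $\ell$-subgroup and a Riesz subspace of $\mathcal J(M)$, as claimed; the key device is the identity $p(y)=p(y\wedge a_p)$, which pins the maximizer of $m^+$ to the maximizer $a_p$ of $p$.
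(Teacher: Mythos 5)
Your proof is correct, and it pursues the same overall strategy as the paper --- realize $\mathcal J_s(M)$ inside the Dedekind complete vector lattice $\mathcal J(M)$ of Theorems \ref{th:3.4}--\ref{th:3.5} and show that the strong measures are stable under the lattice operations --- but the mechanics are genuinely different. The paper works directly with $m=m_1\vee m_2$ for two strong measures: it chooses an idempotent $a$ above both maximizers, takes an $\epsilon$-optimal decomposition $x=x_1+x_2$ from the supremum formula of Theorem \ref{th:3.5}, replaces $x_i$ by $x_i\wedge a$ to get $\sup m\le m(a)+2\epsilon$, and then handles $m_1\wedge m_2$ by the complementation trick $m-(m_1\wedge m_2)=(m-m_1)\vee(m-m_2)$, asserting along the way that $m-m_1$, $m-m_2$ and $m_1\wedge m_2$ are again strong. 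You instead reduce everything to closure under $m\mapsto m\vee 0$ and isolate as an explicit lemma the identity $p(y)=p(y\wedge a_p)$ for a strong measure $p$ with maximizer $a_p$, obtained from modularity; this yields the exact inequality $m(y)\le m(y\wedge a_p)$ for $m=p-q$ and hence $(p-q)^+(a_p)=\sup(p-q)^+$ with no $\epsilon$-bookkeeping. Your route buys some transparency: the steps the paper leaves implicit (why $m_i(x_i)=m_i(x_i\wedge a)$, why $m-m_i$ and $m_1\wedge m_2$ are strong, why sums of strong measures are strong, and why the positive cone of $\mathcal J_s(M)$ is exactly the set of strong measures) all follow from the single identity $p(y)=p(y\wedge a_p)$ together with the formula $m^+(x)=\sup\{m(z)\colon z\le x\}$. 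The paper's version, for its part, stays entirely within the positive cone and never needs to form $(p-q)^+$ of a general difference. Both arguments rest on Theorem \ref{th:3.5} and on the observation that a strong measure is concentrated below its maximizer, so the two proofs are complementary presentations of the same underlying fact.
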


\begin{proof}
Let $m_1$ and $m_2$ be strong measures on $M$. Since $m_i\le^+ m_1+m_2$, $i=1,2$, according to Theorem \ref{th:3.5}, there exists a measure $m=m_1\vee m_2$. We show that $m$ is a strong measure. Being $m_1$ and $m_2$ strong measures, there is an element $a_i \in M$ such that $m_i(a_i)=\sup\{m_i(x)\colon x \in M\}$ for $i=1,2$. Since $M$ is an EMV-algebra, we can assume that there is an idempotent $a\in M$ with $a\ge a_1,a_2$ such that $m_i(a)=m_i(a_i)$ for $i=1,2$.  Let $r = \sup\{m(x)\colon x \in M\}$. Given $\epsilon >0$, there is an element $x\in M$ such that $r<m(x)+\epsilon$. For this $x$, there are elements $x_1,x_2 \in M$ with $x=x_1+x_2$ and $m(x)<m_1(x_1)+m_2(x_2)+\epsilon$. Then
\begin{eqnarray*}
r&<& m(x)+\epsilon <m_1(x_1)+m_2(x_2)+2\epsilon = m_1(x_1\wedge a)+m_2(x_2\wedge a)+2\epsilon\\
&=& m((x_1+x_2)\wedge a)+2\epsilon \le m(a) +2\epsilon,
\end{eqnarray*}
where we have used a fact that $(x_1\wedge a)+(x_2\wedge a)=x\wedge a$ for $a\in \mathcal I(M)$.
Then $r\le m(a)$, so that $r=m(a)$, and $m$ is a strong measure.

Since $m_1,m_2\le^+ m$, we have $m-m_1,m-m_2$ are strong measures on $M$, so that $m_0=(m-m_1)\vee (m-m_2)$ is a strong measure, so that $m-(m_1\wedge m_2)=(m-m_1)\vee (m-m_2)$ which yields $m_1\wedge m_2$ is also a strong measure.

Now let $m_1,m_2 \in \mathcal J_s(M)$. Then $m_i=m_i^+-m_i^-$, so that $m_i\le^+ m_1^+ +m_1^-+m_2^+ + m_2^-$, giving $\mathcal J_s(M)$ is directed with the positive cone consisting of all strong measures. Therefore, $\mathcal J_s(M)$ is an $\ell$-group which is also a Riesz space.
\end{proof}

\section{Integral Representation of States}%7

In the section, we show that every state on an EMV-algebra, which is a finitely additive function, can be represented by a unique regular Borel probability measure which is a $\sigma$-additive measure on the Borel $\sigma$-algebra on some locally compact Hausdorff space. First such a result for MV-algebras was established in \cite{Kro,Pan} and for effect algebras in \cite{241,247}. In addition, we extend this result also for pre-states.

First we present some notions about simplices. For more info about them see the books \cite{Alf,Goo}.

We recall that a {\it convex cone} in a real linear space $V$ is any
subset $C$ of  $V$ such that (i) $0\in C$, (ii) if $x_1,x_2 \in C$,
then $\alpha_1x_1 +\alpha_2 x_2 \in C$ for any $\alpha_1,\alpha_2
\in \mathbb R^+$.  A {\it strict cone} is any convex cone $C$ such
that $C\cap -C =\{0\}$, where $-C=\{-x:\ x \in C\}$. A {\it base}
for a convex cone $C$ is any convex subset $K$ of $C$ such that
every non-zero element $y \in C$ may be uniquely expressed in the
form $y = \alpha x$ for some $\alpha \in \mathbb R^+$ and some $x
\in K$.

We recall that in view of \cite[Prop 10.2]{Goo}, if $K$ is a
non-void convex subset of $V$, and if we set
$$ C =\{\alpha x:\ \alpha \in \mathbb R^+,\ x \in K\},
$$
then $C$ is a convex cone in $V$, and $K$ is a base for $C$ iff
there is a linear functional $f$ on $V$ such that $f(K) = 1$ iff $K$
is contained in a hyperplane in $V$ which misses the origin.

Any strict cone $C$ of $V$ defines a partial order $\le_C$ on $V$ via $x
\le_C y$ iff $y-x \in C$. It is clear that $C=\{x \in V:\ 0 \le_C
x\}$. A {\it lattice cone} is any strict convex cone $C$ in $V$ such
that $C$ is a lattice under $\le_C$.

A {\it simplex} in a linear space $V$ is any convex subset $K$ of
$V$ that is affinely isomorphic to a base for a lattice cone in some
real linear space. A  simplex $K$ in a locally convex Hausdorff
space is said to be (i) {\it Choquet} if $K$ is compact, and (ii)
{\it Bauer} if $K$ and $\partial K$ are compact, where $\partial
K$ is the set of extreme points of $K$.

\begin{theorem}\label{th:sim1}
Let $M$ be an EMV-algebra. Then the state space $\mathcal S(M)$ is a simplex. In addition, the following equivalences hold:
\begin{itemize}
\item[{\rm(i)}] $M$ has a top element.
\item[{\rm(ii)}] $\mathcal S(M)$ is a Choquet simplex.
\item[{\rm(iii)}] $\mathcal S(M)$ is a Bauer simplex.
\end{itemize}
\end{theorem}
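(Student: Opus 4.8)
The plan is to realize $\mathcal S(M)$ as a base for a lattice cone, which by the definition of a simplex immediately yields the first assertion. By Theorem \ref{th:3.6}, the space $\mathcal J_s(M)$ of strong Jordan signed measures is a Riesz space, and its positive cone $C$, which consists exactly of the strong measures, is therefore a lattice cone: it is a strict convex cone (being the positive cone of a partially ordered vector space, $C\cap -C=\{0\}$) that is a lattice under the induced order $\le^+$. Every state is a strong measure, so $\mathcal S(M)\subseteq C$, and the whole question reduces to producing a linear functional on $\mathcal J_s(M)$ that takes the value $1$ precisely on $\mathcal S(M)$.

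For this I would work with $N(m):=\sup\{m(x)\colon x\in M\}$ on $C$, and the key step is to check that $N$ is additive. Given strong measures $m_1,m_2$ with $\sup m_i=m_i(a_i)$, one picks an idempotent $a\in\mathcal I(M)$ with $a\ge a_1,a_2$ (using axiom (iv) of EMV-algebras together with the closure of $\mathcal I(M)$ under $\oplus$); by monotonicity $\sup m_i\le m_i(a)\le \sup m_i$, so $m_i(a)=\sup m_i$. Then $(m_1+m_2)(a)=\sup m_1+\sup m_2$ while $(m_1+m_2)(x)\le \sup m_1+\sup m_2$ for all $x\in M$, whence $N(m_1+m_2)=N(m_1)+N(m_2)$. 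Since $N$ is positively homogeneous and finite on $C$ and $\mathcal J_s(M)=C-C$, it extends uniquely to a linear functional on $\mathcal J_s(M)$ with $N(s)=1$ for each $s\in\mathcal S(M)$. Every nonzero $m\in C$ is then written uniquely as $m=N(m)\cdot\tfrac{1}{N(m)}m$ with $\tfrac{1}{N(m)}m\in\mathcal S(M)$, so $\mathcal S(M)=\{m\in C\colon N(m)=1\}$ is a base for $C$. By \cite[Prop 10.2]{Goo} and the definition of a simplex, $\mathcal S(M)$ is a simplex.

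The equivalences then follow quickly from Proposition \ref{pr:state9} and Theorem \ref{th:state8}. For (i) $\Rightarrow$ (iii): if $M$ has a top element, then $\mathcal S(M)$ is compact by Proposition \ref{pr:state9}, hence a Choquet simplex, and its extreme boundary $\partial\mathcal S(M)=\mathcal{SM}(M)$ (Theorem \ref{th:state8}) is compact as well by Proposition \ref{pr:state9}; thus $\mathcal S(M)$ is a Bauer simplex. The implication (iii) $\Rightarrow$ (ii) is immediate, since a Bauer simplex is by definition a Choquet simplex. Finally (ii) $\Rightarrow$ (i): a Choquet simplex is in particular compact, so $\mathcal S(M)$ is compact and Proposition \ref{pr:state9} forces $M$ to have a top element.

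The main obstacle is the additivity of the supremum functional $N$ on $C$; everything else is a direct appeal either to the lattice-cone structure of Theorem \ref{th:3.6} or to the compactness dichotomy of Proposition \ref{pr:state9}. The delicate point is that $N$ must be shown to be finite and genuinely additive, not merely superadditive, on the cone of strong measures, and this is exactly where the existence of a common idempotent upper bound in an EMV-algebra (and the monotonicity of measures) is essential.
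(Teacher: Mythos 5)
Your proof is correct and follows essentially the same route as the paper: Theorem \ref{th:3.6} supplies the lattice cone of strong measures in $\mathcal J_s(M)$ with $\mathcal S(M)$ as its base, and the equivalences (i)--(iii) follow from Proposition \ref{pr:state9} together with Theorem \ref{th:state8}. The only difference is that you explicitly verify, via the additive functional $N(m)=\sup\{m(x)\colon x\in M\}$ and the common idempotent upper bound, that the states form a base for the cone --- a detail the paper asserts without proof --- and that verification is sound.
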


\begin{proof}
By Theorem \ref{th:3.6}, the space $\mathcal J_s(M)$ of strong Jordan signed measures on $M$ is an $\ell$-group. The space of strong Jordan measures is its positive cone whose base is the set of states on $M$. Therefore, $\mathcal S(M)$ is a simplex.

Since $\partial \mathcal S(M)=\mathcal{SM}(M)$, see Theorem \ref{th:state8}, due to Proposition \ref{pr:state9}, we have that (i)--(iii) are all mutually equivalent statements.
\end{proof}

Nevertheless the state space $\mathcal S(M)$ is not always a Bauer
simplex,  $\partial {\mathcal S}(M)=\mathcal{SM}(M)$ is always a
Baire space, i.e. the intersection of any sequence of open dense subsets is dense. This was established for EMV-algebras in \cite[Cor 4.12]{DvZa1}.

Let ${\mathcal B}(K)$ be the Borel $\sigma$-algebra of a Hausdorff topological space $K$ generated by all open subsets of $K$. Every element of ${\mathcal B}(K)$ is said to be a {\it Borel set} and each $\sigma$-additive (signed) measure on it is said to be a {\it Borel (signed) measure}. We recall that a Borel measure $\mu$ on $\mathcal B(K)$ is called {\it regular} if
\begin{equation}\label{eq:regular}
\inf\{\mu(O):\ Y \subseteq O,\ O\ \mbox{open}\}=\mu(Y)
=\sup\{\mu(C):\ C \subseteq Y,\ C\ \mbox{compact}\}
\end{equation}
for any $Y \in {\mathcal B}(K)$. For example, let $\delta_x$ be the Dirac measure concentrated at the point $x \in K$, i.e., $\delta_x(Y)= 1$ iff $x \in Y$, otherwise $\delta_x(Y)=0$, then every Dirac measure is a regular Borel probability measure whenever $K$ is compact, see e.g. \cite[Prop 5.24]{Goo}.

Let $K$ be a locally compact Hausdorff topological space. Due to the Alexander theorem, see \cite[Thm 4.21]{Kel}, there is the one-point compactification of $K$, which is a space $K\cup\{x_\infty\}$, where $x_\infty \notin K$. In  \cite[Thm 4.13]{DvZa1}, it was shown that if $M$ has no top element, the one-point compactification of $\mathcal{SM}(M)$ is homeomorphic to the set $\mathcal{SM}(N)$.

\begin{theorem}\label{th:integ}{\rm [Integral Representation of States]}
Let $M$ be an EMV-algebra and let $s$ be a state on $M$. Then there is a unique regular Borel probability measure $\mu_s$ on the Borel $\sigma$-algebra $\mathcal B(\mathcal S(M))$ such that
\begin{equation}\label{eq:integ}
s(x)=\int_{\mathcal{SM}(M)} \hat x(t) \dx \mu_s(t),\quad x\in M,
\end{equation}
where $\hat x$ $(x\in M)$ is a continuous affine mapping from $\mathcal S(M)$ into the interval $[0,1]$ such that $\hat x(s):=s(x)$, $s \in \mathcal S(M)$.

Morevover, there is a one-to-one correspondence between the set of regular Borel probability measures on $\mathcal B(\mathcal S(M))$, and the set of regular Borel probability measures on $\mathcal B(\mathcal S(N))$ vanishing at $\{s_\infty\}$.
\end{theorem}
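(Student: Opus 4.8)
The plan is to reduce everything to the integral representation theorem for states on MV-algebras and then transfer along the representing MV-algebra $N$ of Theorem~\ref{th:embed}. If $M$ already has a top element, then $M=N$ is an MV-algebra, $\mathcal S(M)$ is a Bauer simplex by Theorem~\ref{th:sim1}, and the statement is exactly the classical integral representation of states on MV-algebras, see \cite{Kro,Pan}: on a Bauer simplex every point is the barycentre of a unique regular Borel probability measure carried by the compact extreme boundary $\partial\mathcal S(M)=\mathcal{SM}(M)$ (Theorem~\ref{th:state8}), which yields both formula (\ref{eq:integ}) and its uniqueness. So from now on I would assume $M$ has no top element.

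Next I would lift the state. Let $\phi\colon\mathcal S(M)\to\mathcal S(N)$, $\phi(s)=\tilde s$, be the injective affine homeomorphism onto its image from Proposition~\ref{pr:state11}. Applying the top-element case to the MV-algebra $N$ produces a unique regular Borel probability measure $\mu_{\tilde s}$ on $\mathcal B(\mathcal S(N))$, carried by $\mathcal{SM}(N)=\phi(\mathcal{SM}(M))\cup\{s_\infty\}$, with $\tilde s(x)=\int_{\mathcal{SM}(N)}\hat x\dx\mu_{\tilde s}$ for $x\in N$. The crucial step is to show $\mu_{\tilde s}(\{s_\infty\})=0$. Since $s$ is a state there is an idempotent $a\in M$ with $s(a)=1$; as $a\in M$ we have $\hat a(s_\infty)=s_\infty(a)=0$ and $\hat a\le 1$ everywhere, so
\[
1=\tilde s(a)=\int_{\mathcal{SM}(N)}\hat a\dx\mu_{\tilde s}\le \mu_{\tilde s}(\mathcal{SM}(N)\setminus\{s_\infty\})=1-\mu_{\tilde s}(\{s_\infty\}),
\]
forcing $\mu_{\tilde s}(\{s_\infty\})=0$. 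Hence $\mu_{\tilde s}$ is concentrated on $\phi(\mathcal{SM}(M))$, and because $\phi$ restricts to a homeomorphism of $\mathcal{SM}(M)$ onto $\phi(\mathcal{SM}(M))=\mathcal{SM}(N)\setminus\{s_\infty\}$, I would define $\mu_s$ as the pushforward $(\phi^{-1})_*\mu_{\tilde s}$; it is a regular Borel probability measure on $\mathcal B(\mathcal S(M))$ concentrated on $\mathcal{SM}(M)$. Using $\hat x(\phi(s))=\tilde s(x)=s(x)$ for $x\in M$ (read off (\ref{eq:state})) and the change-of-variables formula, one recovers (\ref{eq:integ}). Uniqueness of $\mu_s$ follows by pushing any two candidates forward to $\mathcal S(N)$: each is automatically carried by $\mathcal{SM}(M)$, since $\int_{\mathcal{SM}(M)}\hat a\dx\mu_s=s(a)=1$ together with $\hat a\in\{0,1\}$ on $\mathcal{SM}(M)$ forces $\mu_s(\mathcal{SM}(M))=1$, so their images are boundary measures representing $\tilde s$, and the Bauer-simplex uniqueness of $\mu_{\tilde s}$ applies.

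For the final assertion I would exhibit the correspondence through the boundary picture: by \cite[Thm 4.13]{DvZa1}, $\mathcal{SM}(N)$ is the one-point compactification of the locally compact space $\mathcal{SM}(M)$, with $s_\infty$ as the point at infinity. The standard bijection between finite regular Borel measures on a locally compact space and the regular Borel measures on its one-point compactification that give no mass to the point at infinity, transported along $\phi$, yields a one-to-one correspondence $\mu\mapsto\phi_*\mu$ between regular Borel probability measures on $\mathcal B(\mathcal S(M))$ and regular Borel probability measures on $\mathcal B(\mathcal S(N))$ with $\mu(\{s_\infty\})=0$; the inverse is restriction along $\phi^{-1}$ of the (necessarily boundary-concentrated) measures.

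The hard part will be the measure-theoretic bookkeeping rather than any algebra: $\mathcal S(M)$ is neither compact nor even locally compact when $M$ has no top element (Theorem~\ref{th:local}), so ``regular Borel probability measure on $\mathcal B(\mathcal S(M))$'' must be pinned down as a measure concentrated on the extreme boundary $\mathcal{SM}(M)$, and I must verify that pushforward and pullback along the homeomorphism $\phi$ preserve regularity and carry boundary measures to boundary measures. Establishing the surjectivity of the correspondence --- that every regular Borel probability measure on $\mathcal S(N)$ vanishing at $s_\infty$ is concentrated on $\phi(\mathcal S(M))$ and thus descends --- is the most delicate point, and is precisely where the one-point-compactification structure of $\mathcal{SM}(N)$ over $\mathcal{SM}(M)$ does the real work.
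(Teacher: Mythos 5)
Your proposal is correct and follows essentially the same route as the paper's proof: reduce to the classical MV-algebra case on the representing algebra $N$, show $\mu_{\tilde s}(\{s_\infty\})=0$ using an idempotent $a$ with $s(a)=1$, transport the measure along the homeomorphism $\phi$ onto $\mathcal{SM}(N)\setminus\{s_\infty\}$, and obtain uniqueness and the bijection from the one-point-compactification picture. The only difference is cosmetic: where you invoke the standard correspondence between regular measures on a locally compact space and measures on its one-point compactification vanishing at infinity, the paper verifies the regularity of the pushforward and pullback measures by hand via the inner/outer approximation conditions.
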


\begin{proof}
Let $M\ne \{0\}$ be an EMV-algebra and let $N$ be its representing MV-algebra. Then $\mathcal S(M)$ is a non-empty convex set closed in the weak topology of states which is not compact whenever $M$ has no top element. If $M$ has a top element, the statement follows from \cite{Kro,Pan}.

In what follows, we show that the statement is valid also in the case that $M$ has no top element.

Given $x \in M$, we define a mapping $\hat x: \mathcal S(M) \to [0,1]$ by $\hat x(s):=s(x)$, $s \in \mathcal S(M)$. Then $\hat x$ is  continuous and affine, i.e. it preserves convex combinations. Every $\hat x$ can be uniquely extended by Proposition \ref{pr:state11} to a continuous affine mapping $\bar{\hat x}$ defined on the compact convex set $\mathcal S(N)$.

In a similar way, for each $y \in N$, we define $\hat y:\mathcal S(N)\to [0,1]$ such that $\hat y(s)=s(y)$, $s \in \mathcal S(N)$. If $y \in M$, then $\bar{\hat y}(\tilde s) = \hat y(s)$, $s \in \mathcal{SM}(M)$.

Since $N$ is an MV-algebra and $\mathcal S(N)$ is a convex compact set, by \cite{Kro,Pan}, see also \cite{241,247}, for every state $s\in \mathcal S(N)$, there is a unique regular Borel probability measure $\mu_s$ such that
$$
s(y)=\int_{\mathcal{SM}(N)} \hat y(u)\dx \mu_s(u),\quad y \in N.
$$

Define a mapping $\phi: \mathcal{SM}(M)\to \mathcal{SM}(N)$ by $\phi(s)=\tilde s$, $s\in \mathcal{SM}(M)$, where $\tilde s$ is given by (\ref{eq:state}). Then $\phi$ is injective, affine, continuous, and open. In this case, $X:=\mathcal{SM}(M)$ is a locally compact Hausdorff space and $X^*:=\mathcal{SM}(N)$ is a compact Hausdorff space. Therefore, for the Borel $\sigma$-algebra $\mathcal B(\mathcal{SM}(M))$, we have $\mathcal B(\mathcal{SM}(M)) = \phi^{-1}(\mathcal B(\mathcal{SM}(N)))$.

Now let $s$ be a state on $M$ and let $\tilde s$ be its unique extension to a state on $N$ defined by (\ref{eq:state}), and let $x \in M$. Then there is a unique regular Borel measure $\mu_{\tilde s}$ on $\mathcal B(\mathcal{SM}(N))$ such that we have for all $x \in M$
\begin{eqnarray*}
s(x)&=&\tilde s(x)= \int_{\mathcal{SM}(N)} \bar{\hat x}(u)\dx \mu_{\tilde s}(u)=
\int_{\{s_\infty\}} \bar{\hat x}(u)\dx \mu_{\tilde s}(u) + \int_{\mathcal{SM}(N)\setminus \{s_\infty\}} \bar{\hat x}(u)\dx \mu_{\tilde s}(u)\\
&=& \int_{\phi(\mathcal{SM}(M))} \bar{\hat x}(u)\dx \mu_{\tilde s}(u),
\end{eqnarray*}
when we have used the fact $\bar{\hat x}(s_\infty)=s_\infty(x)=0$.

Now let $a\in M$ be an idempotent such that $s(a)=1$. Define $S(a)=\{s\in \mathcal{SM}(M)\colon s(a)>0\}=\{s\in \mathcal{SM}(M)\colon s(a)=1\}$. Then $S(a)$ is compact and open by \cite[Thm 4.10]{DvZa1}. Similarly, if $S_N(a)=\{s\in \mathcal{SM}(N)\colon s(a)>0\}$, then $S_N(a)$ is also compact and open, $\phi(S(a))=S_N(a)$, and $s_\infty \notin S_N(a)$. Therefore, $\bar{\hat a}=\chi_{S_N(a)}$, so that
$$
1=s(a)=\int_{\mathcal{SM}(N)} \bar{\hat a}(t)\dx \mu_{\tilde s}(t)= \int_{\mathcal{SM}(N)}\chi_{S_N(a)}(t)\dx \mu_{\tilde s}(t) =\mu_{\tilde s}(S_N(a)),
$$
which implies $\mu_{\tilde s}(\{s_\infty\})=0$ for each $s\in \mathcal{SM}(M)$.

Now let $\mu$ be a regular Borel measure on $\mathcal{SM}(N)$ such that $\mu(\{s_\infty\})=0$.  Define a mapping $\mu_\phi: \mathcal B(\mathcal{SM}(M))\to [0,1]$ by $\mu_\phi(A)=\mu(\phi(A))$, $A \in \mathcal B(\mathcal{SM}(M))$. Since $A = \phi^{-1}(A_N)$ for some $A_N \in \mathcal B(\mathcal{SM}(N))$, we have $\phi(A)=A_N\setminus \{s_\infty\}\in \mathcal B(\mathcal{SM}(N))$, which shows that $\mu_\phi$ is a Borel probability measure on $\mathcal B(\mathcal{SM}(M))$.   Now we show that $\mu_\phi$ is a regular measure. Let $Y \in \mathcal B(\mathcal{SM}(M))$. If $O_N$ is an open set in $\mathcal{SM}(N)$ such that $\phi(Y)\subseteq O_N$, then $Y \subseteq \phi^{-1}(O_N)$ and if $Y \subseteq O$, where $O$ is an open set in $\mathcal{SM}(M)$, then $\phi(Y)\subseteq \phi(O)$ and $\phi(O)$ is open in $\mathcal{SM}(N)$.  Therefore,
\begin{eqnarray*}
\mu_\phi(Y)&=&\mu(\phi(Y))=\inf\{\mu(O_N)\colon \phi(Y)\subseteq O_N, O_N \text{ open in } \mathcal{SM}(N)\}\\
&=&\inf\{\mu(O_N\setminus\{s_\infty\})\colon Y \subseteq \phi^{-1}(O_N), O_N \text{ open in } \mathcal{SM}(N)\}\\
&=&\inf\{\mu_\phi(\phi^{-1}(O_N))\colon Y \subseteq \phi^{-1}(O_N),O_N \text{ open in } \mathcal{SM}(N)\}\\
&=&\inf\{\mu_\phi(O)\colon Y \subseteq O, O \text{ open in } \mathcal{SM}(M)\}.
\end{eqnarray*}

Now let $C$ be a compact subset of $\mathcal{SM}(M)$.  Then $X\setminus C$ is open in $X$ and therefore, $s_\infty \notin \phi(X\setminus C)=\phi(X)\setminus \phi(C)$ is open in $X^*$ which means $X^*\setminus (\phi(X)\setminus \phi(C))=\{s_\infty\} \cup \phi(C)$ is closed in $X^*$. In addition, $\phi(C)$ is also closed in $X^*$. Whence, if $C\subseteq Y$, then $\phi(C)\subseteq \phi(Y)$.  Now let $C_N$ be a closed subset of $X^*$ such that $C_N\subseteq \phi(Y)$, then $s_\infty \notin \phi (C_N)\subseteq \phi(Y)$, and $s_\infty \in X^*\setminus C_N$ which means $C:=\phi^{-1}(X^*\setminus (X^*\setminus C_N))=\phi^{-1}(C_N)$ is a compact subset of $X$, and $C\subseteq Y$. Therefore,
\begin{eqnarray*}
\mu_\phi(Y)&=&\mu(\phi(Y))=\sup\{\mu(C_N)\colon  C_N \subseteq \phi(Y), C_N \text{ closed in } \mathcal{SM}(N)\}\\
&=&\sup\{\mu(C_N)\colon  \phi^{-1}(C_N) \subseteq Y, C_N \text{ closed in } \mathcal{SM}(N)\}\\
&=& \sup\{\mu(\phi(C))\colon C \subseteq Y, C \text{ compact in } \mathcal{SM}(M)\}\\
&=& \sup\{\mu_\phi(C)\colon C \subseteq Y, C \text{ compact in } \mathcal{SM}(M)\},
\end{eqnarray*}
which proves that $\mu_\phi$ is a regular Borel probability measure on $\mathcal B(\mathcal{SM}(M))$.

If we put $\mu_s(Y)=\mu_{\tilde s}(\phi(Y))$, $Y \in \mathcal B(\mathcal{SM}(M))$, then $\mu_s$ is a regular Borel probability measure on $\mathcal B(\mathcal{SM}(M))$. Using the transformation of integrals, \cite[p. 163]{Hal}, and the equalities  $\hat x(\phi^{-1}(\tilde t))= \phi^{-1}(\tilde t)(x)=t(x)= \tilde t(x)= \bar{\hat x}(\tilde t)$, where $t \in \mathcal{SM}(M)$, we have
\begin{eqnarray*}
\int_{\mathcal{SM}(M)} \hat x(t) \dx \mu_s(t)&=& \int_{\phi^{-1}(\phi(\mathcal{SM}(M))} \hat x(t)\dx \mu_{\tilde s}(\phi(t))\\
&=& \int_{\phi(\mathcal{SM}(M))} \hat x(\phi^{-1}(\tilde t))\dx \mu_{\tilde s}(\tilde t)\\
&=& \int_{\phi(\mathcal{SM}(M))} \bar{\hat x}(\tilde t)\dx \mu_{\tilde s}(\tilde t)\\
&=&\int_{\phi(\mathcal{SM}(M))} \bar{\hat x}(u)\dx \mu_{\tilde s}(u)=s(x).
\end{eqnarray*}

In what follows, we show that there is a one-to-one correspondence between regular Borel probability measures on $\mathcal B(\mathcal{SM}(M))$ and ones on $\mathcal B(\mathcal{SM}(N))$ vanishing at $\{s_\infty\}$.
Let $\mu$ be an arbitrary regular Borel probability measure on $\mathcal{SM}(M)$. Define a mapping $\mu^\phi(A):=\mu(\phi^{-1}(A))$, $A \in \mathcal B(\mathcal{SM}(N))$. Then $\mu^\phi$ is a Borel probability measure on $\mathcal B(\mathcal{SM}(N))$ such that $\mu^\phi(\{s_\infty\})=0$. For every $A \in \mathcal{SM}(N)$, we have
\begin{eqnarray*}
\mu^\phi(A)= \mu(\phi^{-1}(A))&=& \inf\{\mu(O)\colon \phi^{-1}(A) \subseteq O, O \text{ open in } \mathcal{SM}(M)\}\\
&=& \inf\{\mu(O)(A)\colon A \subseteq \phi(O), O \text{ open in } \mathcal{SM}(M)\}\\
&=& \inf\{\mu(\phi^{-1}(O_N))\colon A \subseteq O_N, O_N \text{ open in } \mathcal{SM}(N)\}\\
&=& \inf\{\mu^\phi(O_N)\colon  A \subseteq O_N, O_N \text{ open in } \mathcal{SM}(N)\},
\end{eqnarray*}
and
\begin{eqnarray*}
\mu^\phi(A)= \mu(\phi^{-1}(A))&=&\sup\{\mu(C)\colon C \subseteq \phi^{-1}(A), C \text{ compact in } \mathcal{SM}(N)\}\\
&=& \sup\{\mu(C)\colon \phi(C) \subseteq A, C \text{ compact in } \mathcal{SM}(N)\}\\
&=&\sup\{\mu^\phi(C_N)\colon C_N \subseteq A, C_N \text{ compact in } \mathcal{SM}(N)\},\\
\end{eqnarray*}
which proves that $\mu^\phi$ is regular. Due to the above, we get $\mu = (\mu^\phi)_\phi$, and $(\nu_\phi)^\phi =\nu$ for every regular Borel probability measure $\nu \in \mathcal B(\mathcal{SM}(N))$ vanishing at $\{s_\infty\}$ which establishes a one-to-one correspondence in question.

Finally, we show the uniqueness of $\mu_s$ in formula (\ref{eq:integ}). Let there be another regular Borel probability measure $\nu$ on  $\mathcal B(\mathcal{SM}(M))$ for which (\ref{eq:integ}) holds, and define $\nu^\phi$ by the above way. Then $\nu^\phi$ is a regular Borel probability measure on
$\mathcal B(\mathcal{SM}(N))$ vanishing at $\{s_\infty\}$. Then in a similar way as it was already used for integral transformation, we obtain for each $x \in M$
\begin{eqnarray*}
\tilde s(x) &=& s(x)= \int_{\mathcal{SM}(M)} \hat x(t) \dx \nu(t)
= \int_{\phi(\mathcal{SM}(M))} \bar{\hat x}(u) \dx \nu^\phi(u)\\
&=& \int_{\mathcal{SM}(N)} \bar{\hat x}(u) \dx \nu^\phi(u)
= \int_{\mathcal{SM}(N)} \bar{\hat x}(u) \dx \mu_{\tilde s}(u).
\end{eqnarray*}
If $x = \lambda_1(x_0)$ for $x_0 \in M$, we have
$$
\tilde s(x)=1-s(x_0) = 1- \int_{\mathcal{SM}(N)} \bar{\hat x}_0(u) \dx \nu^\phi(u) = \int_{\mathcal{SM}(N)} \bar{\hat x}(u) \dx \nu^\phi(u)= \int_{\mathcal{SM}(N)} \bar{\hat x}(u) \dx \mu_{\tilde s}(u),
$$
which yields $\mu_{\tilde s}=\nu^\phi$, i.e. $\mu_s=(\mu_{\tilde s})_\phi=(\nu^\phi)_\phi = \nu$.
\end{proof}

Formula (\ref{eq:integ}) is interesting in the following
point of view: de Finetti in a large number of papers, published as early in the Thirties, ``has always insisted that $\sigma$-additivity is not an integral part of the probability concepts but is rather in the nature of a regularity hypothesis", see \cite[p. vii]{BhBh}. On the other hand, by Kolmogorov \cite{Kol}, a probability measure is assumed to be $\sigma$-additive. The mentioned formula shows that there is a natural coexistence between both approaches.

\begin{remark}\label{rm:integ1}
If an EMV-algebra $M$ has no top element, then for every state-morphisms $s \in \mathcal{SM}(M)$, the unique regular Borel probability measure $\mu_s$ corresponding in formula {\rm(\ref{eq:integ})} to $s$ is the Dirac measure $\delta_s$ concentrated at the point $s$ which is a regular Borel probability measure nevertheless  $\mathcal{SM}(M)$ is only a locally compact and not compact space.
\end{remark}

\begin{proof}
Let $\tilde s$ be the extension of $s$ onto the representing MV-algebra $N$, i.e. $\phi(s)=\tilde s\in \mathcal{SM}(N)$. There exists a unique regular Borel probability measure which is the Dirac measure $\delta_{\tilde s}$ on $\mathcal B(\mathcal{SM}(N))$ corresponding to $\tilde s$ through formula (\ref{eq:integ}). Then from the proof of the latter theorem, we have $\mu_s=\delta_{\tilde s}\circ \phi=\delta_{\phi^{-1}(\tilde s)}=\delta_s$.
\end{proof}

\begin{remark}\label{rm:integ2}
Let $M$ be an EMV-algebra without top element and let $\mu$ be a regular Borel probability measure on $\mathcal B(\mathcal{SM}(M))$. Define a mapping $s_\mu: M \to [0,1]$ given by
\begin{equation}\label{eq:integ1}
s_\mu(x)=\int_{\mathcal{SM}(M)} \hat x(t) \dx \mu(t),\quad x \in M.
\end{equation}
Then $s_\mu$ is a pre-state on $M$. If $M$ has a top element, $s_\mu$ is a state on $M$, if $M$ has no top element, then $s_\mu$ is a state on $M$ if and only if there is an idempotent $a \in M$ such that $\mu(S(a))=1$, where $S(a):=\{s\in \mathcal{SM}(M) \colon s(a)>0\}$.
\end{remark}

\begin{proof}
It is evident that $s_\mu$ is a pre-state on $M$, and if $M$ has a top element, $s_\mu$ is a state. Now let $M$ have no top element. Then $s_\mu$ is a state iff there is an idempotent $a$ such that $s_\mu(a)=1$. Given an idempotent $a\in M$, the set $S(a)$ is both open and compact, see \cite[Thm 4.10]{DvZa1}. Then
$$
s_\mu(a) =\int_{\mathcal{SM}(M)} \hat a(t)\dx \mu(t)= \int_{\mathcal{SM}(M)} \chi_{S(a)}(t)\dx \mu(t)=\mu(S(a)),
$$
which establishes the result.
\end{proof}

A converse to the latter remark will be given in Theorem \ref{th:integ5}. Now we exhibit Example \ref{ex:state9} with respect to the latter remark.

\begin{example}\label{ex:integ3}
Let $\mathcal T$ be the system of all finite subsets of $\mathbb N$. Then there are infinitely many regular Borel probability measures $\mu$ on $\mathcal B(\mathcal{SM}(\mathcal T))$ such that formula {\rm (\ref{eq:integ1})} defines a pre-state that is not strong. In addition, for every state $s$ on $\mathcal T$, there is the least finite subset $A$ of $\mathbb N$ such that $s(A)=1$, and $s$ is of the form $s=\sum\{\lambda_n s_n\colon n \in A\}$ and $\sum\{\lambda_n \colon n \in A\}=1$. Therefore, $\mathcal S(\mathcal T)=\Con(\mathcal{SM}(\mathcal T))$.
\end{example}

\begin{proof}
By Example \ref{ex:state9}, $\mathcal{SM}(\mathcal T)=\{s_n\colon n \in \mathbb N\}$, where $s_n(A)=\chi_A(n)$, $A\in \mathcal T$. Since every $A\in \mathcal T$ is an idempotent, $S(A)=\{s\in \mathcal{SM}(\mathcal T)\colon s(A)>0\}=\{s_n\colon n \in A\}$, which means that for every finite subset $A$ of $\mathbb N$, the set $S(A)$ is compact and open, so that $\mathcal{SM}(\mathcal T)$ is homeomorphic to $\mathbb N$ with the discrete topology. Therefore, $\mathcal B(\mathbb N)=2^{\mathbb N}$. Let $\mu$ be any Borel probability measure on $\mathcal B(\mathbb N)$. Then $1=\mu(\mathbb N)=\sum_n \mu(\{n\})$. If we set $\lambda_n = \mu(\{n\})$ for each $n\ge 1$, then $\mu = \sum_n \lambda_n \mu_n$, where $\mu_n(Y)=\chi_Y(n)$, $Y \in \mathcal B(\mathbb N)$ and $n\ge 1$. Since every set $Y\subseteq \mathbb N$ is open, we have $\mu(Y)=\inf\{\mu(O)\colon Y\subseteq O, O \text{ open in }\mathbb N\}$. On the other hand, every compact set $C$ of $2^{\mathbb N}$ is only a finite subset of $\mathbb N$, we show easily that $\mu(Y) = \sup\{\mu(C)\colon Y \subseteq C, C \text { compact in } \mathbb N\}$, so that every Borel probability measure on $\mathcal B(\mathbb N)$ is regular.

If we take a sequence $\{\lambda_n\}$ of numbers from the real interval $(0,1)$ such that $\sum_{n=1}^\infty \lambda_n = 1$, we have $\mu(S(A))<1$ for each $A \in \mathcal T$, so that $s_\mu$ defined by (\ref{eq:integ1}) is a pre-state, and since $\sup\{s_\mu(A)\colon A \in \mathcal T\}=1$, we see that $s_\mu$ is not strong. In addition, every pre-state on $\mathcal T$ which is not strong is of the form $s_\mu$ for the just described measure $\mu$, and there is infinitely many of such regular Borel probability measures $\mu$.

The characterization of states on $\mathcal T$ follows now from Remark \ref{rm:integ2}.
\end{proof}

Now we characterize states in Example \ref{ex:contra} defined by (\ref{eq:integ1}).

\begin{example}\label{ex:integ4}
Let $\mathcal N=\{A \subseteq \mathbb N \colon \text{ either } A \text{ is finite or } \mathbb N\setminus A \text{ is finite}\}$. Then $\mathcal N$ is an MV-algebra representing $\mathcal T$ from the previous example and $\mathcal{SM}(\mathcal N)=\{\tilde s_n \colon n \in \mathbb N\}\cup\{s_\infty\}$, where $\tilde s_n(A)=\chi_A(n)$ for $A\in \mathcal N$.
Every state $s$ on $\mathcal T$ can be uniquely expressed in the form $s = \sum_n \lambda_n \tilde s_n +\lambda_\infty s_\infty$, where each $\lambda$ is from the interval $[0,1]$ such that the sum of all $\lambda$'s is $1$.

In particular, if $s$ is a state on $\mathcal T$, then $s$ can be uniquely expressed in the form $s =\sum_n \lambda_n s_n$, where $\sum_n\lambda_n =1$, $\lambda_n \in [0,1]$ for each $n\ge 1$, and only finitely many of $\lambda_n$' s is non-zero. If $s$ is a non-zero strong pre-state on $\mathcal T$, then there is a sequence of real numbers $\{\lambda_n\}$ from the interval $[0,1]$ which is zero for all but finite numbers of $n$ with $\sum_n \lambda_n \le 1$ such that $s=\sum_n \lambda_n s_n$; the representation of $s$ via non-zero $\lambda$'s is unique.

A mapping $s$ on $\mathcal T$ is a pre-state that is not strong if and only if $s=\sum_n \lambda_n s_n$, $\lambda_n \in [0,1]$, $\sum_n \lambda_n \le 1$, and infinitely many of $\lambda_n$'s are non-zero.
\end{example}

\begin{proof}
The state-morphism space $\mathcal{SM}(\mathcal N)$ is homeomorphic to the one-point compactification $\mathbb N^*=\mathbb N \cup \{n_\infty\}$ $(n_\infty\notin \mathbb N)$ of $\mathbb N$, where $\mathbb N$ is endowed with the discrete topology. Since $A\subseteq \mathbb N^*$ is open iff either $A\subseteq \mathbb N$ or $n_\infty \in  A$ and $\mathbb N \setminus A$ is finite, the one-point compactification $\mathbb N^*$ of $\mathbb N$ coincides with the discrete topology on $\mathbb N^*$, so that $\mathcal B(\mathbb N^*)=2^{\mathbb N^*}$. Inasmuch as it is compact, every Borel probability measure on $\mathcal B(\mathbb N^*)$ is regular. Moreover, if $\mu$ is a Borel probability measure, then $1=\mu(\mathbb N^*)= \sum_n\mu(\{n\})+ \mu(\{n_\infty\})= \sum_n \lambda_n \mu_n +\lambda_\infty \mu_\infty$, where $\lambda_n =\mu(\{n\})$, $\lambda_\infty =\mu(\{n_\infty\})$, $\mu_n(Y)=\chi_n(Y)$, and $\mu_\infty(Y)= \chi_Y(n_\infty)$ for each $Y \in \mathcal B(\mathbb N^*)$ and for $n\ge 1$.

Clearly, the function $s_\mu$ defined by $s_\mu(x):=\int_{\mathcal{SM}(\mathcal N)} \hat x(t)\dx \mu(t)$ for $x \in \mathcal N$, is a state on $\mathcal N$. Since the function $\hat x$ is defined on a countable set, $\hat x$ can be expressed in the form
$\hat x(t)=\sum_i x_i\chi_{\{i\}}(t)$, $t \in \mathcal{SM}(\mathcal N)$. Whence $s_\mu(x)=\sum_i f_i\mu(\{i\})=\sum_i f_i\sum_n\lambda_n \mu_n(\{i\})= \sum_n\lambda_n \sum_i f_i\mu_n(\{i\})= \sum_n\lambda_n s_{\mu_n}(x)$.

Now let $s$ be a state on $\mathcal T$. Due to Theorem \ref{th:integ}, there is a unique regular Borel measure on $\mathcal B(\mathcal{SM}(\mathcal T))$ such that (\ref{eq:integ1}) holds for each $x \in \mathcal T$. As we have have seen $\mu=\sum_n \lambda_n \mu_n +\lambda_\infty \mu_\infty$ giving $s=\sum_n \lambda_n \tilde s +\lambda_\infty s_\infty$. Let $s= \sum_n \lambda'_n \tilde s +\lambda'_\infty s_\infty$ and some $\lambda_n \ne \lambda'_n$. Then for the Borel measure $\mu' = \sum_n \lambda'_n \mu_n +\lambda'_\infty \mu_\infty$, we get $\mu(\{n\})=\lambda_n \ne \lambda'_n = \mu'(\{n\})$ which yields $\mu \ne \mu'$. This is absurd because $s(x)=\int _{\mathcal{SM}(\mathcal T)} \hat x(t)\dx \mu(t) = \int _{\mathcal{SM}(\mathcal T)} \hat x(t)\dx \mu'(t)$. The same holds if $\lambda_\infty \ne \lambda'_\infty$.

The uniqueness of expression of a state $s$ on $\mathcal T$ as a finite convex combination of state-morphisms on $\mathcal T$ follows from Example \ref{ex:integ3}, the extension of $s$ onto $\mathcal N$, and from the uniqueness of its expression in $\mathcal N$.

If $s$ is a non-zero strong pre-state on $\mathcal T$, then there is a real number $r$ such that $rs$ is a state on $\mathcal T$ which implies the representation of $s$ in question.

The characterization of pre-states on $\mathcal T$ follows from Proposition \ref{pr:int0}, Theorem \ref{th:int2}, and a representation of states on $\mathcal N$ in the upper part of the present proof.
\end{proof}

In the following theorem we show that also for pre-states there is their representation by regular Borel measures via (\ref{eq:integ}). We note that a regular Borel measure on a Borel $\sigma$-algebra $\mathcal B(K)$ of a Hausdorff topological space $K$ is a $\sigma$-additive positive valued mapping on $\mathcal B(K)$, that satisfies the regularity condition (\ref{eq:regular}).

\begin{theorem}\label{th:integ5}
Let $M$ be an EMV-algebra without top element. The set of pre-states on $M$ is a Bauer simplex, and for each pre-state $s$ on $M$,
\begin{itemize}
\item[{\rm (i)}] there is a unique regular Borel probability measure $\mu_{\tilde s}$ on $\mathcal B(\mathcal{SM}(N))$ such that
$$ s(x)=\int_{\mathcal{SM}(N)} \bar{\hat x}(t) \dx \mu_{\tilde s}(t), \quad x \in M,
$$
where $\bar{\hat x}:\mathcal{S}(N)\to [0,1]$ is a continuous mapping defined by $\bar{\hat x}(t)= t(x)$, $x \in N$;
\item[{\rm (ii)}] there is a unique regular Borel measure $\mu_s$ on $\mathcal B(\mathcal{SM}(M))$ such that
\begin{equation}\label{eq:prestate}
s(x)=\int_{\mathcal{SM}(M)} \hat x(t) \dx \mu_s(t),\quad x\in M,
\end{equation}
where $\hat x: \mathcal{PS}(M)\to [0,1]$ is a continuous affine mapping defined by $\hat x(s)=s(x)$, $s \in \mathcal{PS}(M)$.
\end{itemize}
\end{theorem}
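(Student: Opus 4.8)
The plan is to transport everything to the representing MV-algebra $N$, where the Kroupa--Panti representation is available through Theorem~\ref{th:integ}, and then move the resulting measures back and forth along the maps $\kappa$ of Proposition~\ref{pr:int0} and $\phi$ of Proposition~\ref{pr:state11}. For the Bauer simplex claim I would invoke Proposition~\ref{pr:int0}, which gives an affine homeomorphism $\kappa\colon\mathcal{PS}(M)\to\mathcal S(N)$ in the weak topologies. Since $N$ is an MV-algebra it has a top element, so Theorem~\ref{th:sim1} shows that $\mathcal S(N)$ is a Bauer simplex; as this property is invariant under affine homeomorphisms, $\mathcal{PS}(M)$ is a Bauer simplex as well. (One may also read this off directly: $\mathcal{PS}(M)$ is compact with $\partial\mathcal{PS}(M)=\mathcal{PSM}(M)$ compact by Theorem~\ref{th:int2}, and the simplex structure is transferred by $\kappa$.)

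For part (i), given a pre-state $s$ on $M$, Proposition~\ref{pr:int0} produces its unique extension to a state $\tilde s$ on $N$. Applying Theorem~\ref{th:integ} to the MV-algebra $N$ yields a unique regular Borel probability measure $\mu_{\tilde s}$ on $\mathcal B(\mathcal{SM}(N))$ with $\tilde s(y)=\int_{\mathcal{SM}(N)}\bar{\hat y}(t)\dx\mu_{\tilde s}(t)$ for all $y\in N$, and restricting to $x\in M$ gives the asserted formula because $s=\tilde s|_M$. For uniqueness I would use that every $y\in N$ is either $y\in M$ or $y=\lambda_1(x_0)$ with $x_0\in M$, so that $\bar{\hat y}=1-\bar{\hat{x}}_0$; for any probability measure this forces $\int\bar{\hat y}\dx\mu=1-s(x_0)$, whence two such measures agreeing on all $x\in M$ agree on all of $N$, and uniqueness over $N$ applies.

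For part (ii), I would push $\mu_{\tilde s}$ down to $\mathcal{SM}(M)=\mathcal{SM}(N)\setminus\{s_\infty\}$ by setting $\mu_s(A)=\mu_{\tilde s}(\phi(A))$, where $\phi\colon\mathcal{SM}(M)\to\mathcal{SM}(N)$ is the embedding of Proposition~\ref{pr:state11}; the regularity of $\mu_s$ then follows verbatim from the inner/outer regularity computations already carried out in the proof of Theorem~\ref{th:integ}. Because $\bar{\hat x}(s_\infty)=s_\infty(x)=0$ for $x\in M$, the transformation-of-integrals formula gives $\int_{\mathcal{SM}(M)}\hat x(t)\dx\mu_s(t)=\int_{\mathcal{SM}(N)}\bar{\hat x}(u)\dx\mu_{\tilde s}(u)=s(x)$. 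Note that $\mu_s$ need not be a probability measure: its total mass equals $1-\mu_{\tilde s}(\{s_\infty\})$, which is exactly why the statement asks only for a regular Borel measure.

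The main obstacle I expect is the uniqueness in part (ii), since $\mu_s$ is not a probability measure and so the probability-measure bijection of Theorem~\ref{th:integ} does not apply directly. To deal with it, given another representing regular Borel measure $\nu$ on $\mathcal{SM}(M)$, I would first show $\nu(\mathcal{SM}(M))\le 1$: the compact-open sets $S(a)=\{t\colon t(a)=1\}$ for $a\in\mathcal I(M)$ cover $\mathcal{SM}(M)$ and satisfy $\nu(S(a))=\int\hat a\dx\nu=s(a)\le 1$, while every compact subset lies in a single $S(a_0)$ because $S(a_1)\cup\cdots\cup S(a_n)=S(a_1\vee\cdots\vee a_n)$, so inner regularity forces the total mass to be at most $1$. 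Then I would lift $\nu$ to the probability measure $\nu'=\nu^\phi+(1-\nu(\mathcal{SM}(M)))\,\delta_{s_\infty}$ on $\mathcal{SM}(N)$, where $\nu^\phi(A)=\nu(\phi^{-1}(A))$, verify that $\nu'$ represents $\tilde s$ on all of $N$ (using $\bar{\hat y}=1-\bar{\hat{x}}_0$ for $y\notin M$ together with total mass $1$), and conclude $\nu'=\mu_{\tilde s}$ by the uniqueness established in part (i); restricting back along $\phi$ yields $\nu=\mu_s$.
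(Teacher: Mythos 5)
Your proposal is correct and follows essentially the same route as the paper: extend $s$ to the state $\tilde s$ on $N$, apply the Kroupa--Panti representation there for (i), push $\mu_{\tilde s}$ forward along $\phi$ for (ii), and for uniqueness lift a competing measure $\nu$ back to the probability measure $\nu^\phi+(1-\nu(\mathcal{SM}(M)))\delta_{s_\infty}$ on $\mathcal{SM}(N)$, which is exactly the measure $\tilde\mu$ the paper constructs. Your explicit verification that $\nu(\mathcal{SM}(M))\le 1$ via the compact-open sets $S(a)$ is a worthwhile addition, since the paper tacitly assumes this when asserting that $\tilde\mu$ is a (positive) probability measure.
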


\begin{proof}
Let $s$ be a pre-state on $M$ and let $\tilde s$ be its extension to a state on $N$ given by (\ref{eq:state0}) and let $\kappa:\mathcal{PS}(M)\to \mathcal S(N)$ be given by $\kappa(s)=\tilde s$, $s \in \mathcal{PS}(M)$. By Proposition \ref{pr:int0}, $\kappa$ is an affine homeomorphism, therefore, \cite[Thm 22]{Kro} implies $\mathcal{PS}(M)$ is a Bauer simplex.

By Proposition \ref{pr:state11}, the restriction of $\kappa$ onto $\mathcal{SM}(M)$ is the function $\phi$ given by $\phi(s)=\tilde s$ for each $s\in \mathcal{SM}(M)$.

Given $x\in N$, we define a function $\bar{\hat x}: \mathcal S(N)\to [0,1]$ given by $\bar{\hat x}(s)=s(x)$, $s \in \mathcal S(N)$. By  \cite{Kro} or Theorem \ref{th:integ}, there is a unique regular Borel probability measure $\mu_{\tilde s}$ on $\mathcal B(\mathcal{SM}(N))$ such that
$$\tilde s(x)= \int_{\mathcal{SM}(N)} \bar{\hat x}(t) \dx \mu_{\tilde s}(t), \quad x \in N,
$$
which proves (i).

(ii) Similarly as in the proof of Theorem \ref{th:integ}, we have for each $x \in M$
$$
s(x)=\tilde s(x)=\int_{\phi(\mathcal{SM}(M))} \bar{\hat x}(u)\dx \mu_{\tilde s}(u).
$$

Define $\mu_s(Y)=\mu_{\tilde s}(\phi(Y))$ for each $Y \in \mathcal B(\mathcal{SM}(M))$. Using the proof of Theorem \ref{th:integ}, $\mu_s$ is a regular Borel measure on $\mathcal B(\mathcal{SM}(M))$ which is not necessary a probability (i.e. $\mu_s$ is only $\sigma$-additive and positive). If $\hat x: \mathcal{PS}(M)\to [0,1]$ is given by $\hat x(t)=t(x)$, $t \in \mathcal{PS}(M)$, using the equalities  $\hat x(\phi^{-1}(\tilde t))= \phi^{-1}(\tilde t)(x)=t(x)= \tilde t(x)= \bar{\hat x}(\tilde t)$, where $t \in \mathcal{SM}(M)$,
we have
\begin{eqnarray*}
\int_{\mathcal{SM}(M)} \hat x(t) \dx \mu_s(t)&=&
\int_{\phi^{-1}(\phi(\mathcal{SM}(M))} \hat x(t)\dx \mu_{\tilde s}(\phi(t))\\
&=& \int_{\phi(\mathcal{SM}(M))} \hat x(\phi^{-1}(\tilde t))\dx \mu_{\tilde s}(\tilde t)\\
&=& \int_{\phi(\mathcal{SM}(M))} \bar{\hat x}(\tilde t)\dx \mu_{\tilde s}(\tilde t)\\
&=&\int_{\phi(\mathcal{SM}(M))} \bar{\hat x}(u)\dx \mu_{\tilde s}(u)=s(x).
\end{eqnarray*}

Let $\mu $ be another regular Borel measure on $\mathcal B(\mathcal{SM}(M))$ which satisfies (\ref{eq:prestate}). We define a mapping $\tilde \mu$ on $\mathcal B(\mathcal{SM}(N))$ as follows
$$
\tilde \mu(A)=\begin{cases}
\mu(\phi^{-1}(A)) & \text{ if } s_\infty \notin A,\\
\mu(\phi^{-1}(A)) + 1-\mu(\mathcal{SM}(M)) & \text{ if }  s_\infty \in A,
\end{cases} \quad A \in \mathcal B(\mathcal{SM}(N)).
$$
Then $\tilde\mu$ is a Borel probability measure and, due to fact that $\mathcal B(\mathcal{SM}(N))=\{B \in \mathcal B(\mathcal{SM}(N)) \colon \text{ either } B=\phi(A)  \text { or } B=\phi(A) \cup\{s_\infty\} \text{ for some } A \in \mathcal B(\mathcal{SM}(M))\}$, we have $\tilde \mu$ is also regular, and $\tilde \mu(\phi(Y))=\mu(\phi^{-1}(\phi(Y)))=\mu(Y)$ for each $Y \in \mathcal B(\mathcal{SM}(M))$. Given $x \in N$, we have
\begin{eqnarray*}
\int_{\mathcal{SM}(N)}\bar{\hat x}(u) \dx\tilde \mu(u)&=& \int_{\{s_\infty\}}\bar{\hat x}(u) \dx\tilde\mu(u) + \int_{\mathcal{SM}(N)\setminus\{s_\infty\}}\bar{\hat x}(u) \dx\tilde\mu(u)\\
&= &(1-\mu(\mathcal{SM}(M)))s_\infty(x)+ \int_{\phi(\mathcal{SM}(M))} \bar{\hat x}(u) \dx\tilde\mu(u).
\end{eqnarray*}
If $x \in M$, then
$$\int_{\mathcal{SM}(N)}\bar{\hat x}(u) \dx\tilde\mu(u)= \int_{\phi(\mathcal{SM}(M))} \bar{\hat x}(u) \dx\tilde\mu(u)$$
and check
\begin{eqnarray*}
s(x)=\tilde s(x)&=& \int_{\mathcal{SM}(M)} \hat x(t)\dx\mu_s(t)= \int_{\phi^{-1}(\phi(\mathcal{SM}(M))} \hat x(t)\dx\tilde \mu(\phi(t))\\
&=& \int_{\phi(\mathcal{SM}(M))} \hat x(\phi^{-1}(u))\dx\tilde \mu(u)
= \int_{\phi(\mathcal{SM}(M))} \bar{\hat x}(u)\dx \tilde \mu(u)\\
&=& \int_{\mathcal{SM}(N)} \bar{\hat x}(u)\dx \tilde \mu(u).
\end{eqnarray*}
If $x =\lambda_1(x_0)$, where $x_0\in M$. Then
\begin{eqnarray*}
\int_{\mathcal{SM}(N)}\bar{\hat x}(u) \dx\tilde\mu(u)&=& (1-\mu(\mathcal{SM}(M)))s_\infty(x)+ \int_{\phi(\mathcal{SM}(M))} \bar{\hat x}(u) \dx\tilde\mu(u)\\
&=& (1-\mu(\mathcal{SM}(M))) + \int_{\phi(\mathcal{SM}(M))}(1- \bar{\hat x}_0(u)) \dx\tilde\mu(u)\\
&=& (1-\mu(\mathcal{SM}(M))) + \mu(\mathcal{SM}(M))- \int_{\phi(\mathcal{SM}(M))} \bar{\hat x}_0(u) \dx\tilde\mu(u)\\
&=& 1-s(x_0)=\tilde s(x).
\end{eqnarray*}
Combining both last cases, we have $\tilde \mu = \mu_{\tilde s}$ which yields $\mu = \mu_s$, and the proof of (ii) is finished.
\end{proof}

Finally, we present an integral representation of pre-states on an EMV-algebra with a top element.

\begin{corollary}\label{co:integ6}
Let $M$ be an EMV-algebra with top element. For each state-morphism $s$ on $M$ there is a unique regular Borel measure $\mu_s$ on $\mathcal B(\mathcal{SM}(M))$ such that
$$
s(x)=\int_{\mathcal{SM}(M)} \hat x(t) \dx \mu_s(t), \quad x \in M,
$$
where $\hat x(t)=t(x)$ for each $t \in \mathcal{PS}(M)$.
\end{corollary}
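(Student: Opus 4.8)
The plan is to read this off directly from the Integral Representation Theorem (Theorem \ref{th:integ}), using the fact that a state-morphism occupies an extreme point of the state space, so that its representing measure can only be a Dirac measure. Since $M$ has a top element, $M$ is itself an MV-algebra, it coincides with its own representing MV-algebra $N$, and by Proposition \ref{pr:state9} the state space $\mathcal S(M)$ is compact. By Theorem \ref{th:state8} we have $\mathcal{SM}(M)=\partial\mathcal S(M)$, which is then a compact subset of $\mathcal S(M)$. Hence $\mathcal B(\mathcal{SM}(M))$ is the Borel $\sigma$-algebra of a compact Hausdorff space, every Borel probability measure on it is automatically regular, and in particular the Dirac measure $\delta_s$ concentrated at any point $s\in\mathcal{SM}(M)$ is a regular Borel probability measure, see \cite[Prop 5.24]{Goo}.

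Next I would invoke Theorem \ref{th:integ}. Every state-morphism is a state by Proposition \ref{pr:state4}, so Theorem \ref{th:integ} applies to $s$ and yields a unique regular Borel probability measure $\mu_s$ on $\mathcal B(\mathcal{SM}(M))$ satisfying the representation formula. To pin down $\mu_s$ explicitly I would test the natural candidate $\delta_s$: for every $x\in M$,
$$
\int_{\mathcal{SM}(M)} \hat x(t) \dx \delta_s(t)=\hat x(s)=s(x).
$$
Thus $\delta_s$ is a regular Borel probability measure realizing the required integral representation, and the uniqueness clause of Theorem \ref{th:integ} forces $\mu_s=\delta_s$. (Conceptually this is just the Choquet-theoretic fact that in the Bauer simplex $\mathcal S(M)$, established in Theorem \ref{th:sim1}, the unique boundary-supported representing measure of an extreme point is the point mass at that point.)

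I do not expect a genuine obstacle here, since all the substance is carried by Theorem \ref{th:integ}; the corollary is the specialization to an extreme state in the case of a top element. The only points deserving care are matching the domain of integration $\mathcal{SM}(M)$ with the domain of the measure, confirming that when $M$ has a top element the representing measure genuinely lives on the compact extreme boundary $\mathcal{SM}(M)$ rather than spilling into the interior, and noting that $\delta_s$ is regular, which is immediate from compactness. Consequently, the representing measure of a state-morphism is precisely the Dirac measure concentrated at that state-morphism.
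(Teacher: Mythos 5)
Your argument is correct for the statement as literally printed: a state-morphism is a state (Proposition \ref{pr:state4}), Theorem \ref{th:integ} supplies existence and uniqueness of the representing regular Borel probability measure, and testing the Dirac measure $\delta_s$ against the integral formula identifies $\mu_s=\delta_s$ by uniqueness. This is essentially Remark \ref{rm:integ1} specialized to the compact case. One small slip: it is not true that every Borel probability measure on an arbitrary compact Hausdorff space is automatically regular (that holds for compact metric spaces, not in general); but you do not need this, since the regularity of $\delta_s$ is exactly what \cite[Prop 5.24]{Goo} provides.

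However, your proof diverges from the paper's because the corollary is evidently misstated: the sentence introducing it announces an integral representation of \emph{pre-states} on an EMV-algebra with a top element, the function $\hat x$ is declared on $\mathcal{PS}(M)$, the conclusion asks only for a regular Borel measure rather than a probability measure, and the paper's own proof begins with the case of the zero pre-state. The intended hypothesis is ``for each pre-state $s$ on $M$.'' The paper's argument runs: the zero pre-state is represented by the zero measure; a non-zero pre-state on an EMV-algebra with top element is strong, so $s/s(1)$ is a state, Theorem \ref{th:integ} gives a unique regular Borel probability measure $\mu$ representing $s/s(1)$, and $\mu_s:=s(1)\mu$ does the job. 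Your Dirac-measure argument covers only the extreme points of the state space and does not extend to a general pre-state, whose representing measure has total mass $s(1)\le 1$ and is typically not a point mass; the normalization step $s\mapsto s/s(1)$ is the one idea your proposal is missing relative to what the paper actually proves here.
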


\begin{proof}
If $s$ is the zero pre-state on $M$, then for the zero measure $\mu_0$ we have the result. If $s$ is a non-zero pre-state, then $s$ is strong, and $s/s(1)$ is a state on $M$, where $1$ is the top element on $M$. Using Theorem \ref{th:integ}, there is a unique regular Borel probability $\mu$ on $\mathcal B(\mathcal{SM}(M))$ such that $s(x)/s(1)=\int_{\mathcal{SM}(M)} \hat x(t)\dx\mu(t)$. If we set $\mu_s=s(1)\mu$, we obtain the result in question.
\end{proof}

\section{The Horn--Tarski Theorem}%8

A famous result by Horn and Tarski \cite{HoTa} states that every
state on a  Boolean subalgebra $A$ of a Boolean algebra $B$ can be
extended to a state on $B$, of course not in a unique way. E.g. the
two-element Boolean subalgebra, $\{0,1\}$ has a unique state,
0-1-valued, and it can be extended to many distinct  states on $B$, in general. This result was generalized for MV-algebras in \cite[Thm 6]{Kro1} and in \cite[Thm 6.9]{241} for effect algebras. In what follows, we generalize this result also for EMV-algebras.

\begin{theorem}\label{th:HT}{\rm [Horn--Tarski Theorem]}
Let $M_0$ be an EMV-subalgebra of an EMV-algebra $M$. Then every state on $M_0$ can be extended to a state on $M$, and every state-morphism on $M_0$ can be extended to a state-morphism on $M$.
\end{theorem}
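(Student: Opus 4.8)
The plan is to reduce everything to the Horn--Tarski theorem for MV-algebras, \cite[Thm 6]{Kro1}, by passing to representing MV-algebras. We may assume $M\ne\{0\}$, since otherwise there are no states and the statement is vacuous. Let $N$ be an MV-algebra representing $M$ as in Theorem \ref{th:embed} (with $N=M$ when $M$ already has a top element $1$), so that $M_0\subseteq M\subseteq N$, and write $x^*=\lambda_1(x)$. First I would identify a suitable MV-subalgebra of $N$ containing $M_0$. If $1\in M_0$ (which forces $M=N$), then $M_0$ is already an MV-subalgebra of $N$ and I apply \cite[Thm 6]{Kro1} directly. Otherwise I set $N_0:=M_0\cup M_0^{*}$ with $M_0^{*}=\{\lambda_1(x)\colon x\in M_0\}$, and I would check that $N_0$ is an MV-subalgebra of $N$ in which $M_0$ is a maximal ideal: closure follows from the identities $x\oplus\lambda_1(y)=\lambda_1(y\odot\lambda_1(x))$ and $\lambda_1(x)\oplus\lambda_1(y)=\lambda_1(x\odot y)$, together with the facts that $y\odot\lambda_1(x)=y\odot\lambda_a(x)\in M_0$ by (\ref{eq:x<y}) and $x\odot y\in M_0$ for an idempotent $a\in\mathcal I(M_0)$ dominating $x,y$. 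Moreover $M_0\cap M_0^{*}=\emptyset$, because $x,y\in M_0$ with $x\oplus y=1$ would give $1=x\oplus y\in M_0$, contrary to $1\notin M_0$; hence $M_0$ is a maximal ideal of $N_0$ and every element of $N_0\setminus M_0$ is a complement of an element of $M_0$.

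Next I would lift the given functional from $M_0$ to $N_0$. For a state (resp. state-morphism) $s_0$ on $M_0$, define $\tilde{s_0}$ on $N_0$ by $\tilde{s_0}(x)=s_0(x)$ for $x\in M_0$ and $\tilde{s_0}(\lambda_1(x))=1-s_0(x)$ for $x\in M_0$. I would argue that $\tilde{s_0}$ is a state (resp. state-morphism) on $N_0$ by the very computation of Proposition \ref{pr:state11}: its proof uses only that $M_0$ is a maximal ideal of the MV-algebra $N_0$, that every element of $N_0\setminus M_0$ is a complement of an element of $M_0$, and that $s_0$ attains the value $1$ on some idempotent $a_0\in M_0$; the ``no top element'' hypothesis there is never used beyond $1\notin M_0$, which holds here.

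Now $N_0$ is an MV-subalgebra of the MV-algebra $N$. In the state case I would invoke \cite[Thm 6]{Kro1} to extend $\tilde{s_0}$ to a state $S$ on $N$. In the state-morphism case, $P_0:=\Ker(\tilde{s_0})$ is a maximal ideal of $N_0$, and I would produce a maximal ideal $P$ of $N$ with $P\cap N_0=P_0$; equivalently, I extend the MV-homomorphism $\tilde{s_0}\colon N_0\to[0,1]$ to an MV-homomorphism $S\colon N\to[0,1]$, which is possible because the standard MV-algebra $[0,1]$ is injective in the variety of MV-algebras (being complete and divisible). In either case $S$ is a state (resp. state-morphism) on $N$ with $S|_{N_0}=\tilde{s_0}$. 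Finally I would restrict $S$ to $M$: since $S(a_0)=\tilde{s_0}(a_0)=s_0(a_0)=1$ with $a_0\in M_0\subseteq M$, the restriction $S|_M$ is additive for $+$ and attains $1$, hence is a state on $M$, and $S|_{M_0}=\tilde{s_0}|_{M_0}=s_0$. When $s_0$ is a state-morphism, $S|_M$ is a pre-state-morphism on $M$ that does not vanish (again $S(a_0)=1$), so by Theorem \ref{th:int2} it is a state-morphism on $M$ extending $s_0$.

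The delicate point, which I expect to be the main obstacle, is the MV-algebra \emph{state-morphism} extension used in the third paragraph. Unlike the additive state case it is not covered by \cite{Kro1}, and it must be justified separately, either by the injectivity of the standard MV-algebra $[0,1]$ (complete plus divisible) or, equivalently, by a Zorn's-lemma argument extending the maximal (hence prime) ideal $P_0$ of $N_0$ to a maximal ideal of $N$ contracting to $P_0$. Everything else is routine: the verification that $N_0$ is an MV-subalgebra with $M_0$ maximal, the reuse of the Proposition \ref{pr:state11} computation for the lift to $N_0$, and the final restriction, which relies only on $s_0$ attaining the value $1$ inside $M_0\subseteq M$.
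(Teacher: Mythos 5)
Your proposal is correct and follows essentially the same route as the paper's proof: pass to the representing MV-algebras $N_0\subseteq N$, lift the state (resp.\ state-morphism) from $M_0$ to $N_0$ by the formula of Proposition \ref{pr:state11}, extend to $N$ via \cite[Thm 6]{Kro1} (resp.\ by extending the maximal ideal $\Ker(\tilde s_0)$ to a maximal ideal of $N$ and using Theorem \ref{th:state6}), and restrict back to $M$, noting that the value $1$ is attained already on $M_0$. The paper handles the state-morphism case exactly by your second (Zorn/maximal-ideal) alternative rather than by injectivity of $[0,1]$, and otherwise differs only in splitting the argument into three explicit cases instead of your unified treatment.
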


\begin{proof}
There are three cases.
(i) $M_0$ and $M$ have the same top element $1$, then $M_0$ and $M$ are in fact MV-algebras, and the result follows from \cite[Thm 6]{Kro1}. (ii) $M$ has top element $1$ and $1\notin M_0$. Let $N_0=\{x \in M \colon \text{ either } x\in M_0 \text{ or } \lambda_1(x)\in M_0\}$. Then $N_0$ is an MV-algebra representing $M_0$, for more details, see \cite[Thm 5.21]{DvZa}. By Proposition \ref{pr:state11}, $s$ can be extended to a state $\tilde s$ on $N_0$ defined by (\ref{eq:state}). Applying \cite[Thm 6]{Kro1}, $\tilde s$ can be extended to a state on $M$ which gives the result.

(iii) $M$  has no top element. Let $M_0$ be an EMV-subalgebra of an EMV-algebra $M$. Let $N$ be the representing MV-algebra for $M$ where each element of $N$ is either from $M$ or $\lambda_1(x)$ is from $M$. Define $N_0=\{x\in N\colon \text{ either } x \in M_0 \text{ or } \lambda_1(x)\in M_0\}$. Then $N_0$ is an MV-algebra representing $M_0$ and $N_0$ is an MV-subalgebra of $N$.  Let $s$ be a state on $M_0$, we define a state $\tilde s$ on $N_0$ by (\ref{eq:state}). By \cite[Thm 6]{Kro1}, $\tilde s$ can be extended to a state $s'$ on $N$. The restriction $s''$ of $s'$ onto $M$ is a state on $M$ because there is an element $a\in M_0$ such that $s(a)=1$.

Now let $s$ be a state-morphism on $M_0$. Also in this situation, we have three cases. (i) $M_0$ and $M$ have the same top element $1$. Let $I_0=\Ker(s)$, then $I_0$ can be extended to a maximal ideal $I$ on $M$. There is a state-morphism $s'$ on $M$ such that $I=\Ker(s')$. The restriction $s''$ of $s'$ onto $M_0$ is a state-morphism such that $\Ker(s)\subseteq \Ker(s'')$ implying $s=s''$. Therefore, $s'$ is a state-morphism on $M$ that is an extension of the state-morphism $s$ on $M_0$.

(ii) $M$ has top element $1$ and $1\notin M_0$. Then $N_0=\{x \in M \colon \text{ either } x\in M_0 \text{ or } \lambda_1(x)\in M_0\}$ is an MV-algebra
representing $M_0$. Hence, by Proposition \ref{pr:state11}, the extension $\tilde s$ of $s$ onto $N_0$ given by (\ref{eq:state}) is a state-morphism. Using case (ii), we see $\tilde s$ can be extended to a state-morphism $s'$ on $M$. The restriction $s''$ of $s'$ onto $M_0$ gives a state-morphism such that $\Ker(s)=\Ker(s'')$, so that $s=s''$ and $s'$ is an extension of $s$ onto $M$.

(iii) $M$  has no top element and let $\tilde s$ be its extension to $\tilde s$ on $N_0$. It is easy to verify that $\tilde s$ is a state-morphism on $N_0$, see also \cite[Prop 4.4]{DvZa1}. Moreover, $\Ker(\tilde s) = \Ker(s)\cup \Ker_1(s)^*$, where $A^*=\{\lambda_1(a) \colon a \in A\}$, and $\Ker(\tilde s)$ is a maximal ideal of $N_0$. Let $I_0$ be the ideal of $N$ generated by $\Ker(\tilde s)$. Then $1 \notin I_0$ and there is a maximal ideal $I$ of $N$ containing $I_0$, so there is a state-morphism $s'$ on $N$ such that $\Ker(s')=I$. The restriction $s''$ of $s'$ onto $M$ is a state-morphism and the restriction $s'''$ of $s'$ onto $M_0$ is a state-morphism on $M$. Then $\Ker(s)\subseteq \Ker(s''')$ and the maximality of $\Ker(s)$ and $\Ker(s''')$ gives $\Ker(s)=\Ker(s''')$, so that by Theorem \ref{th:state6}, $s=s'''$ and $s''$ is an extension of $s$ onto $M$.
\end{proof}

\section{Conclusion}

EMV-algebras are new algebraic structures introduced by authors in \cite{DvZa} which resemble MV-algebras locally but the top element is not assumed. In the paper we have introduced a state as a $[0,1]$-valued additive function on an EMV-algebra $M$ which attains the value $1$ at some element $x\in M$. A special kind of states are state-morphisms which are EMV-homomorphisms on $M$ with value in the interval $[0,1]$. The state space of any EMV-algebra is non-void whenever $M$ has at least one non-zero element, and it is a convex subset whose extremal states are exactly state-morphisms, Theorem \ref{th:state8}. Under the weak topology of states, the state space of an EMV-algebra is a convex Hausdorff space which is compact or locally compact iff $M$ possesses a top element, Proposition \ref{pr:state9} and Theorem \ref{th:local}. Nevertheless the state space is not compact, every state lies in the weak closure of the convex hull of state-morphisms, this is a Krein--Mil'man-type theorem for states, Theorem \ref{th:state12}.

A weaker form of states are pre-states and strong pre-states. We have defined Jordan signed measures and strong Jordan signed measures, and we have showed that they form a Dedekind complete $\ell$-groups, Theorem \ref{th:3.4} and Theorem \ref{th:3.6}. This allows us to show that every state on an EMV-algebra is represented by a unique regular Borel probability measure on the Borel $\sigma$-algebra of the state space, Theorem \ref{th:integ}. Finally, we show a variant of the Horn--Tarski theorem showing that every state on an EMV-subalgebra can be extended to a state on the EMV-algebra, Theorem \ref{th:HT}.

\end{document}